\newcounter{lemma}
\newtheorem{Theorem}{Theorem}
\newtheorem{Lemma}[lemma]{Lemma}
\newtheorem{Corollary}[lemma]{Corollary}
\newtheorem{Proposition}[lemma]{Proposition}
\newtheorem{theorem}{Theorem}
\theoremstyle{definition}
\newtheorem{Example}[lemma]{Example}
\newtheorem{Definition}[lemma]{Definition}
\newtheorem{Remark}[lemma]{Remark}
\newtheorem{Assumption}[lemma]{Assumption}
\def\H{\mathfrak H}
\def\P{\mathbb P}
\def\tr{\operatorname{tr}}
\def\nm{\operatorname{nm}}
\def\C{\mathbb C}
\def\O{\mathcal O}
\def\P{\mathbb P}
\def\Q{\mathbb Q}
\def\R{\mathbb R}
\def\Z{\mathbb Z}
\def\mod{\  \mathrm{mod}\ }
\def\sgn{\operatorname{sgn}}
\def\gen#1{\langle #1\rangle}
\def\JS#1#2{\left(\frac{#1}{#2}\right)}
\def\gauss#1{\left\lfloor #1\right\rfloor}
\def\SL{\mathrm{SL}}
\def\CM{\mathrm{CM}}
\def\wt{\widetilde}
\def\M#1#2#3#4{\begin{pmatrix}#1&#2\\#3&#4\end{pmatrix}}
\def\SM#1#2#3#4{\left(\begin{smallmatrix}#1&#2\\#3&#4\end{smallmatrix}
  \right)}
\def\div{\operatorname{div}}
\def\Div{\operatorname{Div}}
\def\Pic{\operatorname{Pic}}
\def\Jac{\operatorname{Jac}}
\def\spin{\operatorname{spin}}
\begin{document}

\title{Equations of hyperelliptic Shimura curves}

\author{Jia-Wei Guo}
\address{Department of Mathematics, National Tsing Hua University,
  Hsinchu, Taiwan 300}


\author{Yifan Yang}
\address{Department of Applied Mathematics, National Chiao Tung
  University, Hsinchu, Taiwan 300}  
\email{yfyang@math.nctu.edu.tw}
\date{\today}
\subjclass[2000]{Primary 11F03; secondary 11G15, 11G18}
\thanks{The authors were partially supported by Grant
  102-2115-M-009-001-MY4 of the Ministry of Science and Technology,
  Taiwan (R.O.C.).}
\thanks{
  The AMPL code for solving integer programming problems appearing in
  this work was written by Mr. Yi-Hsuan Lin. The authors would like to
  thank Mr. Lin for his masterfully written code. It would have been
  impossible to construct required Borcherds forms without the code.} 

\begin{abstract}
By constructing suitable Borcherds forms on Shimura curves and using
Schofer's formula for norms of values of Borcherds forms at CM-points, we
determine all the equations of hyperelliptic Shimura curves
$X_0^D(N)$. As a byproduct, we also address the problem of whether a
modular form on Shimura curves $X_0^D(N)/W_{D,N}$ with a divisor
supported on CM-divisors can be realized as a Borcherds form, where
$X_0^D(N)/W_{D,N}$ denotes the quotient of $X_0^D(N)$ by all the
Atkin-Lehner involutions.
 The construction of Borcherds forms is done by solving
certain integer programming problems.
\end{abstract}
\maketitle

\begin{section}{Introduction}

For an indefinite quaternion algebra $B$ of discriminant $D$ over $\Q$
and a positive integer $N$ with $(D,N)=1$, we let $X_0^D(N)$ be
the Shimura curve associated to an Eichler order $\O$ of level $N$ in
$B$. When $D=1$, the Shimura curve $X_0^D(N)$ is simply the classical
modular curve $X_0(N)$, which is the coarse moduli space of elliptic
curves together with a cyclic subgroup of order $N$ and has been
extensively studied in literature. When $D>1$, the curve $X_0^D(N)$ is
the coarse moduli space of principally polarized abelian surfaces with
multiplication by $\O$. The arithmetic of such a Shimura curve is
similar to those of classical modular curves, but the lack of cusps makes the
Diophantine geometry and explicit calculation of such a Shimura curve
more interesting and challenging than those of classical modular
curves. The primary purpose of the present paper is to address the
problem of determining equations of Shimura curves.

In the classical modular case, which have been extensively studied and
well-known for admitting Fourier expansions around the cusps, there
are many constructions of modular forms and modular functions,
such as Eisenstein series, the Dedekind function, theta series, and
etc., and there are formulas for their Fourier expansions. Thus, it is
often easy to determine equations of modular curves. We refer the
reader to Galbraith {~\cite{Galbraith}}, Yang \cite{Yang-AIM}, and the
references contained therein for more informations about equations of modular
curves.

On the other hand, when $D\neq 1$, the absence of cusps has been an
obstacle for explicit approaches to Shimura curves since modular forms
or modular functions on Shimura curves do not have Fourier expansions
and as a result,  most of the methods for classical modular curves
cannot possibly be extended to the case of general Shimura curves.
Up to now, only a few equations of Shimura curves are known. Ihara
\cite{Ihara} was perhaps the first one to give defining equations of Shimura
curves. For example, he found an equation for the curve $X^6_0(1)$ of
genus $0$. Kurihara \cite{Kurihara} extended Ihara's method and
determined equations of $X^{10}_0(1)$ and $X^{22}_0(1)$ of genus $0$
and $X^{14}_0(1)$, $X^{21}_0(1)$, and $X^{46}_0(1)$ of genus $1$.
Jordan \cite{Jordan} computed equations of two Shimura curves
$X^{15}_0(1)$ and $X^{33}_0(1)$ of genus $1$. Later on, Gonz\'alez and Rotger
\cite{{Victor-genus-two},{Victor-genus-one}} completed the list of
equations of Shimura curves $X_0^D(N)$ of genus $1$ and $2$.
For Shimura curves $X_0^D(N)$ of higher genus, Elkies \cite{Elkies-K3}
found equations of Shimura curves $X^{57}_0(1)$ and  $X^{206}_0(1)$
using the fact that some families of $K3$ surfaces are parameterized
by Shimura curves. More recently, Molina \cite{Molina-hyperelliptic} 
found equations of $X^{39}_0(1)$ and $X^{55}_0(1)$ and Atkin-Lehner
quotients of some Shimura curves. Also, Gonz\'alez and Molina
\cite{Molina-isogeny} determine equations of all Shimura curves
$X_0^D(1)$ of genus $3$. (Note that it happens that all these curves
are hyperelliptic.) We remark that all the methods in the above-mentioned
works other than those in \cite{Elkies-K3} are 
strongly based on the Cerednik-Drinfeld theory of $p$-adic
uniformization of Shimura curves \cite{Boutot-Carayol}, $p|D$, and arithmetic properties
of CM-points. In addition, other than \cite{Elkies-K3}, their
methods do not allow us to locate general CM-points on the curves.

In this paper, we will adopt a very different approach, using the
theory of Borcherds forms and explicit formulas for values of
Borcherds forms at CM-points to obtain equations of Shimura
curves. (See Section \ref{section: Borcherds forms} for a quick
introduction to Borcherds forms.) The main result of this paper is a
complete list of equations of all hyperelliptic Shimura curves $X_0^D(N)$.

\begin{Theorem} \label{theorem: main}
The table in Appendix A gives a complete list of equations of
hyperelliptic Shimura curves $X_0^D(N)$, $D>1$.
\end{Theorem}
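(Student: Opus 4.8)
The plan is to reduce the assertion to a finite list of curves and then to determine the equation of each by constructing suitable Borcherds forms and evaluating them at CM-points.

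\emph{Reduction to a finite problem.} Since the genus of $X_0^D(N)$ grows with $DN$, only finitely many of these curves have genus $\ge 2$, and among those only finitely many are hyperelliptic; the relevant genus formulas, together with the classification of hyperelliptic Shimura curves already available in the literature (the genus $1$, $2$, $3$ work of Gonz\'alez--Rotger and Gonz\'alez--Molina, and Ogg-style bounds on the number of Atkin--Lehner fixed points), isolate this list explicitly. For each curve $X=X_0^D(N)$ on it I would record its genus $g$ and the Atkin--Lehner involution $w\in W_{D,N}$ that acts as the hyperelliptic involution --- for $D>1$ the hyperelliptic involution is always an Atkin--Lehner element, being central in the automorphism group --- so that $X/\langle w\rangle\cong\P^1$ and $X$ is the double cover of this $\P^1$ branched over the $2g+2$ fixed points of $w$. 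Choosing a hauptmodul $t$ on $X/\langle w\rangle$ then reduces the problem to pinning down the $\Q$-rational branch polynomial $f(t)$, of degree $2g+1$ or $2g+2$, in the model $y^2=f(t)$.

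\emph{Determining $t$ and $f$.} The fixed points of $w$ --- more generally the ramification divisor of $X\to X/\langle w\rangle$ --- consist of CM-points, so the branch divisor is a $\Q$-rational combination of CM-points on $\P^1$. I would therefore search for meromorphic modular forms whose divisors are prescribed combinations of CM-divisors: one to serve as $t$ (say with a simple zero and a simple pole at chosen CM-points), and further forms whose ratio has divisor $\sum_i P_i-(2g+2)\infty$, the divisor cut out by $f(t)$, working on the smaller quotient $X_0^D(N)/W_{D,N}$ wherever the construction descends so as to keep the obstruction spaces small. By Borcherds' theory such a form exists exactly when the candidate CM-divisor is orthogonal to the finite-dimensional space of holomorphic vector-valued modular forms of the dual weight for the relevant Weil representation; concretely, one must exhibit a weakly holomorphic input form of weight $1/2$ with the required principal part satisfying this orthogonality relation and with integral coefficients, which is a linear feasibility problem over $\Z$ --- an integer programming problem, solved here by the AMPL code. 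With the requisite Borcherds forms in hand, Schofer's formula for the norm of the value of a Borcherds form at a CM-point supplies the remaining constants: evaluating the relevant ratios at one or two auxiliary CM-points, whose $t$-coordinates are known from the construction, and taking norms to $\Q$ fixes the leading coefficient of $f$ and resolves the finitely many sign and scaling ambiguities.

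\emph{Verification.} For each curve I would check that $y^2=f(t)$ indeed has genus $g$, that its primes of bad reduction and its CM-points agree with the known arithmetic of $X_0^D(N)$ (in particular that the branch points carry the expected CM-orders, as predicted by Eichler's formula for the number of fixed points of $w$), and that the equation is consistent with the models already recorded in the genus $1$, $2$, $3$ cases by Gonz\'alez--Rotger, Gonz\'alez--Molina, Molina, and Elkies.

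\emph{Main obstacle.} The crux is the Borcherds-form construction. The obstruction spaces grow with $DN$, the admissible principal parts are tightly constrained (only certain CM-discriminants occur, with bounded multiplicities), and one must simultaneously meet the orthogonality relations, keep the divisor supported on exactly the intended CM-points, and keep every coefficient integral; the existence of a usable Borcherds form is thus far from automatic, and it is the systematic solution of these integer programs --- together with the bookkeeping needed to identify which CM-points are fixed by which Atkin--Lehner elements --- that makes the curve-by-curve determination feasible.
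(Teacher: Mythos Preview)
Your outline follows the paper's strategy (Borcherds forms plus Schofer's formula, curve by curve over Ogg's finite list), but there are two genuine gaps in the execution.

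\textbf{Where the Borcherds forms live.} The construction used here --- lifting an admissible scalar-valued weight-$1/2$ form on $\Gamma_0(M)$ to a vector-valued form of type $\rho_L$ and then applying Borcherds' lift --- forces $O^+_{L,F}=O^+_L$, so the resulting Borcherds form is automatically a modular form on $X_0^D(N)/W_{D,N}$, the quotient by the \emph{full} Atkin--Lehner group. You cannot, by this method, directly produce a Hauptmodul on $X/\langle w\rangle$; you only get a Hauptmodul $s$ on $X/W_{D,N}$. The paper therefore does not work on $X/\langle w\rangle$ at all. Instead it climbs from $X/W_{D,N}$ back up to $X$ through \emph{several} intermediate degree-$2$ covers $X/W\to X/W_{D,N}$, one for each index-$2$ subgroup $W\subset W_{D,N}$ that is needed. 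For one such $W$ the cover has positive genus and yields an equation $y^2=g(s)$ (with $y^2$, not $y$, realized as a Borcherds form on $X/W_{D,N}$); for another $W'$ the cover has genus $0$ and gives a parametrization such as $x^2=cs$ or a conic in $(s,t)$; substituting one into the other produces the hyperelliptic model of $X$ in the variable $x$. Your proposal mentions $X/W_{D,N}$ only as an optional simplification, but it is the obligatory arena, and this tower-of-covers step is the missing mechanism.

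\textbf{Resolving signs.} Schofer's formula gives only $\bigl|\prod_{\tau\in\mathrm{CM}(d)}\psi(\tau)\bigr|$. For a rational CM-point this is $|s(\tau_d)|$, and ``taking norms to $\Q$'' does nothing further since the value is already rational. The paper's device is to introduce a \emph{second} Hauptmodul $\tilde s$ on $X/W_{D,N}$, normalized at three other CM-points so that $\tilde s=bs+c$ with $b,c$ determined; then knowing both $|s(\tau_d)|$ and $|bs(\tau_d)+c|$ pins down $s(\tau_d)$ itself (and for degree-$2$ CM-points, the minimal polynomial). The sign of $y^2(\tau_d)$ is fixed separately by the Shimura reciprocity law, i.e., by requiring $y(\tau_d)$ to lie in the correct ring class field. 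Without these two ingredients the coefficients of $f$ are not determined.
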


The idea of realizing modular forms on Shimura curves as Borcherds 
forms is not new. For example, as a corollary to his formula for
average values of Borcherds forms at CM-points, Schofer \cite{Schofer}
proved a weak analogue of Gross and Zagier's result
\cite{Gross-Zagier} on the prime factorization of the norm of the
difference of two singular moduli on the classical modular curve
$X_0(1)$ for the case of Shimura curves. Later on, Errthum
\cite{Errthum} applied Schofer’s formula to
compute singular moduli on $X^6_0(1)/W_{6,1}$ and
$X^{10}_0(1)/W_{10,1}$, verifying Elkies’ numerical computation
\cite{Elkies-computation}, where $W_{D,N}$ denotes the full
Atkin-Lehner group on $X_0^D(N)$. However, applications of
Borcherds forms to theory of Shimura curves were not explored any
further in literature. One possible reason is that in order to
successfully use Borcherds forms to do computation on Shimura curves,
one needs a systematic method to construct them in the first place,
but such a method is not developed in literature yet. Thus, our first
task here is to develop a systematic method to construct Borcherds
forms. We will see that the problem of constructing Borcherds forms
reduces to that of solving certain integer programming problems,
which we solve by using the AMPL modeling language
(http://www.ampl.com) and the Gurobi solver (http://www.gurobi.com).

Note that our method works for any Shimura curve $X_0^D(N)$ such that
$X_0^D(N)/W_{D,N}$ has genus $0$, but because there are too many of
them, here we consider only the hyperelliptic cases. (There are more
than $110$ non-hyperelliptic Shimura curves $X_0^D(N)$ whose
Atkin-Lehner quotient $X_0^D(N)/W_{D,N}$ has genus $0$.) In addition,
under a certain technical assumption (Assumption below), it is also possible to determine
equations of $X_0^D(N)/W_{D,N}$ even if it is not of genus $0$. In
Section \ref{subsection: additional}, we give two such
examples. However, the method is less systematic and it is not clear
whether it will always work in general.

In principle, our list of equations should also be
obtainable using Elkies' approach \cite{Elkies-K3}, but our approach
via Borcherds forms have potential applications to other problems
about Shimura curves beyond the scope of the present paper. To
illustrate our point, in Section \ref{section: height}, we will show
how our construction of Borcherds forms lead to a method to compute
heights of CM-points on Shimura curves, again, under Assumption
\ref{assumption: 1}. Note that both Elkies' and our approaches have an
advantage over other methods in that we can determine the coordinates
of CM-points on Shimura curves.

The rest of the paper is organized as follows. In Section
\ref{section: Borcherds forms}, we give a quick overview of the theory of
Borcherds forms and explain the idea of realizing modular forms on
Shimura curves in terms of Borcherds forms. The exposition of
this section follows \cite{Yang-CM}. In Section
\ref{section: construction}, we discuss how to
construct Borcherds forms by solving certain integer programming
problems. For our purpose, the case of odd $D$ needs special
attention. As a byproduct, we find that for $(D,N)$ in Theorem
\ref{theorem: main} with even $D$, all meromorphic modular forms with
divisors supported on CM-divisors (Definition \ref{definition:
  CM-divisor}) can be realized as Borcherds forms. (We believe that
this is also true for odd $D$, but since it is not the main problem we
are concerned with, we will not prove this assertion here.) In Section
\ref{section: equations}, we will give several examples illustrating
how to obtain equations of Shimura curves using Borcherds forms we 
constructed in Section \ref{section: construction} and Schofer's
formula for values of  Borcherds forms at CM-points. In
Section \ref{subsection: additional}, we give additional 
examples where the genus of $X_0^D(N)/W_{D,N}$ is not
$0$. Specifically, we determine equations of $X_0^{142}(1)/W_{142,1}$
and $X_0^{302}(1)/W_{302,1}$, under Assumption \ref{assumption: 1}.
Finally, in Section \ref{section: height} we demonstrate how to
explicitly compute heights of CM-points on Shimura curves using our
construction of Borcherds forms.
\end{section}

\begin{section}{Borcherds forms}
\label{section: Borcherds forms}
\begin{subsection}{Basic theory} We give a quick introduction to
  Borcherds forms. For details, see {\cite{Borcherds-Invent,
  Borcherds-Duke, Bruinier}} for the classical setting,
see {\cite{Errthum, Kudla-Integral, Schofer}} for the adelic setting.

Let $L$ be an even lattice with symmetric bilinear form $\left\langle
  \cdot,\cdot \right\rangle$ of signature $(n,2)$ and $L^{\vee}$ be
the dual lattice of $L$. We assume $L$ is nondegenerate and
denote by
$$
  \{e_{\eta}:\eta\in L^{\vee}/L\}
$$
the standard basis for the group algebra $\mathbb{C}[L^{\vee}/L]$.
Associated to the lattice $L$, we have a unitary \emph{Weil
  representation} $\rho_L$ of the metaplectic group 
$$\widetilde{{\SL}}(2,\Z)=\left\lbrace 
\left(\begin{pmatrix}
a & b\\
c & d
\end{pmatrix},\sqrt{c\tau+d}\right):\begin{pmatrix}
a & b\\
c & d
\end{pmatrix}\in {\SL}(2,\Z)
\right\rbrace 
$$ 
on the group algebra $\C[L^{\vee}/L]$ defined by
\[\begin{split}
\rho_{L}(T)e_{\eta}=&e^{-2\pi i\left\langle \eta,\eta \right\rangle/2 }e_{\eta},\\
\rho_{L}(S)e_{\eta}
=&\frac{e^{2\pi i(n-2)/8}}{\sqrt{|L^{\vee}/L|}}\displaystyle \sum_{\delta \in L^{\vee}/L}e^{2\pi i\left\langle \eta,\delta\right\rangle }e_{\delta},
\end{split}\]
where
$$
S=\left(\begin{pmatrix}
0 & -1\\
1 & 0
\end{pmatrix},\sqrt{\tau}\right)\quad \text{and}\quad
T=\left(\begin{pmatrix}
1 & 1\\
0 & 1
\end{pmatrix},1\right),
$$
which generate $\widetilde\SL(2,\Z)$.

\begin{Definition} A holomorphic function $F:\H\rightarrow
  \C[L^{\vee}/L]$ is called a weakly holomorphic vector-valued modular
  form of weight $k\in\frac{1}{2}\Z$ and {type} $\rho_L$ on
  $\widetilde{{\SL}}(2,\Z)$ if it satisfies
$$
  F\left(\frac{a\tau+b}{c\tau+d}\right)=(c\tau+d)^k\rho
  \left(\begin{pmatrix} a & b\\ c &
      d \end{pmatrix},\sqrt{c\tau+d}\right)
  F(\tau)
$$
for all $\tau\in\H$ and all $\SM abcd\in\SL(2,\Z)$ and $F$ is
meromorphic at the cusp $\infty$. The last condition means that the
Fourier expansion of $F$ is of the form
$$
  F(\tau)=\sum_{\eta\in L^{\vee}/L}
  \sum_{m\in\Z+\left\langle \eta,\eta\right\rangle /2, ~m>-m_0}
  c_{\eta}(m)q^{m}{e_{\eta}}, \quad q=e^{2\pi\tau},
$$
for some rational number $m_0$.
\end{Definition}

For $k=\Q,\ \R$ or $\C$, let $V(k)=L\otimes k$ and extend the
definition of $\left\langle \cdot,\cdot \right\rangle$ to $V(k)$ by
linearity. Define $O_V(\R)$ to be the orthogonal group of the bilinear
form $\left\langle \cdot,\cdot \right\rangle$ and its subgroup
$$O^+_V(\R):=\{\sigma\in O_V(\R):\spin\sigma= \sgn\det\sigma \}, $$
where if $\sigma$ is equal to the product of $n$ reflections with
respect to the vectors $v_1,\ldots,v_n$, then its spinor norm is
defined by $\spin\sigma=(-1)^n\prod^n_{i=1}\sgn\left\langle v_i,v_i
\right\rangle$. We also define
$$
  O^+_L:=\{\sigma\in O^+_V(\R): \sigma(L)= L \}
$$
to be the orthogonal group of the lattice $L$. As the
orthogonal group $O^+_L$ acts on the dual lattice $L^{\vee}$, there is
an induced operation on $\C[L^{\vee}/L]$ given by
$$
  \sum_{\eta\in L^{\vee}/L}c_{\eta}e_{\eta}\longmapsto\sum_{\eta\in
  L^{\vee}/L}c_{\eta}e_{\sigma\eta}, \qquad \sigma\in O^+_L.
$$

\begin{Definition}
Suppose that $F=\sum_{\eta\in L^{\vee}/L}F_\eta e_\eta$ is a vector-valued
modular form. We define the {automorphism group} $O^+_{L,F}$ of $F$ by
\begin{equation*}
O^+_{L,F}=\{\sigma \in O^+_L:F_{\sigma\eta}=F_{\eta}
  \text{ for all }\eta \text{ in } L^{\vee}/L\}.
\end{equation*}

\end{Definition}

Consider the subset
$$
  K=\left\{ \left[ z\right] \in \mathbb{P}\left( V\left( \mathbb{C}
  \right) \right) :\left\langle z,z \right\rangle =0,
  \ \left\langle z,\bar{z} \right\rangle <0\right\}
$$
of the projective space $\mathbb{P}(V(\C))$. This set $K$ consists of
two connected components and the orthogonal group $O^+_V(\R)$
preserves the components. Pick one of them to be $K^+$. Then it can be
checked that $O^+_V(\R)$ acts transitively on $K^+$.

\begin{Definition}
Suppose $\widetilde{K}^+=\left\{ z \in V\left( 
\mathbb{C}\right) :\left[ z\right] \in K^+ \right\}$.
For each subgroup of $\Gamma$ of finite index of $O^+_L$, we call a
meromorphic function $\Psi:\widetilde{K}^+\rightarrow \mathbb{P}(\C)$
a {modular form of weight} {$k$} and character $\chi$ on $\Gamma$
if $\Psi$ satisfies 
\begin{enumerate}
\item[(i)] ${\Psi}(cz)=c^{-k}\widetilde{\Psi}(z)$ for all $c\in
    {\C}^\ast$ and $z\in\widetilde{K}$,
\item[(ii)] ${\Psi}(hz)=\chi (h)\widetilde{\Psi}(z)$ for all $h\in \Gamma$
  and $z\in\widetilde{K}$.
\end{enumerate}
\end{Definition}

\begin{theorem}[{\cite[Theorem 13.3]{Borcherds-Invent}}]\label{B}
Let $L$ be an even lattice of signature $(n,2)$ and $F(\tau)$ be a
weakly holomorphic vector-valued modular forms of weigh $1-{n}/{2}$
and type $\rho_L$ with Fourier expansion $ F(\tau)=\sum_{\eta}\left(
  \sum_{n}c_{\eta}(n)q^n\right){e_{\eta}}$. Suppose that
$c_{\eta}(n)\in\Z$ for any $\eta\in L^{\vee}/L$ and $n\leq 0$. Then
there corresponds a meromorphic function $\Psi_F(z),\ z\in
\widetilde{K}^+$ with the following properties.
\begin{enumerate}
\item[(i)] $\Psi_F(z)$ is a meromorphic modular forms of weight $c_0(0)/2$
  for the group ${O}^+_{L,F}$ with respect to some unitary character
  $\chi$ of ${O}^+_{L,F}$.
\item[(ii)] The only zeros or poles of $\Psi_F(z)$ lie on the rational
  quadratic divisor 
$$
  \lambda^{\perp}=\left\{z\in\widetilde{K}^+:\left\langle z,\lambda
  \right\rangle=0\right\}
$$
for $\lambda$ in $L$, $\left\langle \lambda,\lambda\right\rangle>0$,
and are of order
$$
  \sum_{0<r\in \Q,r\lambda\in
    L^{\vee}}c_{r\lambda}\left(-r^2\left\langle
  \lambda,\lambda \right\rangle/2\right).
$$
\end{enumerate}
\end{theorem}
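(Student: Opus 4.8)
Since the statement above is Borcherds's theorem, I sketch the proof via the \emph{regularized theta lift}.

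\textbf{Overall strategy.} The idea is to produce $\Psi_F$ as the exponential of a (regularized) theta integral of $F$. Let $\Theta_L(\tau,z)$ be the Siegel theta function attached to $L$: a real-analytic function on $\H\times\wt K^+$ valued in $\C[L^\vee/L]$, built from the Gaussian on the positive-definite part of $V(\R)$ determined by the point $z$, which in the variable $\tau$ transforms under $\SL(2,\Z)$ with the Weil representation dual to $\rho_L$ and a weight (together with a power of $v=\Im\tau$) chosen so that the pairing $\langle F(\tau),\overline{\Theta_L(\tau,z)}\rangle$ is $\SL(2,\Z)$-invariant, and in the variable $z$ is invariant under $O^+_L$. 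Form the integral of $\langle F(\tau),\overline{\Theta_L(\tau,z)}\rangle$ over a fundamental domain $\mathcal F$ for $\SL(2,\Z)$ against $du\,dv/v^2$. Because $F$ is weakly holomorphic the integrand blows up exponentially at the cusp, so one regularizes: integrate over $\mathcal F_T=\{\tau\in\mathcal F:v\le T\}$, expand the result as $T\to\infty$, and keep the constant term; call the outcome $\Phi_F(z)$.

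\textbf{Key steps.} First I would establish the transformation law and moderate growth of $\Theta_L$ by Poisson summation (Vignéras' criterion), so that the integrand is genuinely $\SL(2,\Z)$-invariant and $\Phi_F(z)$ is well defined and real-analytic for $z\in\wt K^+$ lying off all the divisors $\lambda^\perp$. Second, I would analyze $\Phi_F$ near a wall $\lambda^\perp$ with $\lambda\in L$, $\langle\lambda,\lambda\rangle>0$: by isolating in the theta kernel the rank-one sublattice through $\lambda$ and using the known logarithmic singularity of the one-variable theta integral, $\Phi_F$ acquires a singularity along $\lambda^\perp$ whose leading term is $-\bigl(\sum_{r>0,\,r\lambda\in L^\vee}c_{r\lambda}(-r^2\langle\lambda,\lambda\rangle/2)\bigr)\log\langle\mu,\lambda\rangle^2$ for $\mu$ near $\lambda^\perp$. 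Third, I would compute $\Phi_F$ explicitly near a $0$-dimensional cusp: choose a primitive isotropic $z_0\in L$ and a complementary isotropic vector, unfold $\Theta_L$ using its Fourier expansion in the $z_0$-direction, and reduce the $\tau$-integral against the Fourier coefficients $c_\eta(m)$ of $F$. On a Weyl chamber $W$ this yields a closed formula whose transcendental part is $-4\,\mathrm{Re}\log\prod_{\lambda>0}\bigl(1-e^{2\pi i\langle\lambda,z\rangle}\bigr)^{c_\lambda(-\langle\lambda,\lambda\rangle/2)}$ together with a term $-2c_0(0)\log(-\langle z,\bar z\rangle)$ and a piecewise-linear ``Weyl vector'' term; the infinite product converges because the negative-index Fourier coefficients of $F$ grow subexponentially. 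This exhibits $\Phi_F(z)=-2\log\|\Psi_F(z)\|$ up to an additive constant, for an explicit holomorphic function $\Psi_F$ on the preimage of $W$.

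\textbf{Conclusion and the hard part.} The automorphy statement is then formal: $\Phi_F$ is $O^+_{L,F}$-invariant because $\Theta_L$ is $O^+_L$-invariant and $F_{\sigma\eta}=F_\eta$ for $\sigma\in O^+_{L,F}$; hence for such $\sigma$ the ratio $\Psi_F(\sigma z)/\Psi_F(z)$ is a constant of absolute value $1$, i.e.\ a unitary character $\chi(\sigma)$, and the homogeneity $\Psi_F(cz)=c^{-k}\Psi_F(z)$ together with the $-2c_0(0)\log(-\langle z,\bar z\rangle)$ term forces $k=c_0(0)/2$. Globalizing — that $\Psi_F$ is a single meromorphic function on all of $\wt K^+$ with zeros and poles supported exactly on the $\lambda^\perp$ with the stated orders — follows by comparing the product expansions attached to different Weyl chambers (they differ by the obvious rational wall-crossing factor) with the singularity analysis of the second step. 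The technical heart, and where I expect the real work to lie, is exactly steps two and three: making the $T\to\infty$ regularization rigorous, proving finiteness of the theta integral off the divisors, pinning down the singularity orders \emph{on the nose} via careful bookkeeping over the norm of each lattice vector, and establishing convergence and chamber-independence of the resulting Borcherds product.
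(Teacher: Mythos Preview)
Your sketch is a reasonable outline of Borcherds's original argument via the regularized theta lift, and I do not see a genuine gap in it as a high-level plan. However, there is nothing to compare it against in this paper: the theorem is quoted here as a black box, with no proof given beyond the citation to \cite[Theorem 13.3]{Borcherds-Invent}. The authors simply import the result and then specialize it to the Shimura-curve setting in the subsequent lemmas. So your proposal neither agrees with nor differs from the paper's own proof, because the paper does not supply one.
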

\begin{Definition}
We call the function $\Psi_F(z)$ the \emph{Borcherds form} associated to $F$.
\end{Definition}

\end{subsection}

\begin{subsection}{Borcherds forms on Shimura curves}
\label{subsection: Borcherds on Shimura}
We now explain how to realize modular forms on Shimura curves as
Borcherds forms. We follow the exposition in \cite{Yang-CM}. See also
\cite{Errthum}.

Let $B$ be an indefinite quaternion algebra of discriminant $D$ over
$\Q$. Consider the vector space
$$
  V=V(\Q)=\{x\in B:\tr(x)=0\}
$$
over $\Q$ with the natural bilinear form $\left\langle
  x,y\right\rangle=\tr(x\overline y)=-\tr(xy)$. Then $V$ has signature
$(1,2)$ and the associated quadratic form is $\nm(x)=-x^2$. Given an
Eichler order $\O$ of level $N$ in $B$, we let $L$ be the lattice
$$
  L=\O\cap V=\{x\in \O:\tr(x)=0\}.
$$
For an invertible element $\beta$ in $B\otimes\R$, define
$\sigma_\beta:V(\R)\to V(\R)$ by
$\sigma_\beta(\gamma)=\beta\gamma\beta^{-1}$. Then, we can show that
$$
  O^+_V(\R)=\{\sigma_{\beta}:\beta \in (B\otimes \R)^\ast/{\R}^\ast,
  \nm(\beta)> 0\}\times \{\pm 1\}
$$
and 
$$
  O^+_L=\{\sigma_{\beta}:\beta \in N^+_B(\O )/{\Q}^\ast\}\times \{\pm
  1\}.
$$
If we assume that the quaternion algebra is represented by
$B=(\frac{a,b}{\Q})$ with $a>0$ and $b>0$, that is, $B=\Q+\Q i+\Q j+\Q
ij$ with $i^2=a,\ j^2=b,$ and $ij=-ji$, and fix an embedding
$\iota:B\hookrightarrow M(2,\R)$ by
$$
  \iota: i\longmapsto\M0{\sqrt a}{\sqrt a}0, \qquad
  j\mapsto\M{\sqrt b}00{-\sqrt b},
$$
then each class in $K=\{z\in
\mathbb{P}(V(\C)):\left\langle z,z\right\rangle=0, \left\langle
  z,\bar{z}\right\rangle<0\}$ contains a unique representative of the
form  
$$
  z(\tau)=\frac{1-{\tau}^2}{2\sqrt{a}}i+\frac{\tau}{\sqrt{b}}j
  +\frac{1+{\tau}^2}{2\sqrt{ab}}ij
$$
for some $\tau\in \H^{\pm} $, the union of upper and lower
half-plane. The mapping $\tau \mapsto z(\tau)\mod \C^\ast$ is a
bijection of between $\H^{\pm}$ and $K$.

Let $K^+$ be the image of $\H^+=\H$ under the mapping. Then we get
{compatible} actions of $N_B^+(\O)/\Q^\ast$ on $K^\ast$ and $\H$
with the action on $K^+$ by {conjugation} and the action on $\H$ by
{linear fraction transformation}. More precisely, this means that for
$\alpha\in N_B^+(\O)$, if we write $\iota(\alpha)=\SM{c_1}{c_2}{c_3}{c_4}$, then
\begin{equation}\label{equation: compatibility}
\begin{split}
  \alpha z(\tau)\alpha^{-1}=\frac{(c_3\tau+c_4)^2}{\nm(\alpha)}
  z\left(\frac{c_1\tau+c_2}{c_3\tau+c_4}\right)
  \equiv z(\iota(\alpha)\tau) \mod \C^\ast.
\end{split}
\end{equation}

\begin{Lemma}[{~\cite[Lemma 4]{Yang-CM}}] 
Let $ F(\tau)=\sum_{\eta}\left(
  \sum_{n}c_{\eta}(n)q^{n}\right){e_{\eta}}$ be a weakly holomorphic
vector-valued modular form of weight ${1}/{2}$ and type $\rho_L$ such
that $O^+_{L,F}=O^+_L$ and $c_{\eta}(n)\in \Z$ whenever $\eta\in
L^\vee/L$ and $n\leq 0$. Then the function $\psi_F(\tau)$ defined by
$\psi_F(\tau)=\Psi_F(z(\tau))$ is a meromorphic modular forms of
weight $c_0(0)$ with certain unitary character $\chi$ on the Shimura
curve $X^D_0(N)/W_{D,N}$.
\end{Lemma}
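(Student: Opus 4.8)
The plan is to deduce this Lemma directly from Theorem~\ref{B} together with the explicit description of the relevant orthogonal group and its symmetric space in the Shimura-curve setting. First I would invoke Theorem~\ref{B} with $n=1$: since $L=\O\cap V$ is an even lattice of signature $(1,2)$ and $F$ has weight $1/2=1-n/2$ and type $\rho_L$ with integral coefficients $c_\eta(n)$ for $n\le 0$, the theorem produces a meromorphic function $\Psi_F(z)$ on $\widetilde K^+$ that is a meromorphic modular form of weight $c_0(0)/2$ for the group $O^+_{L,F}$ with respect to some unitary character $\chi$. By hypothesis $O^+_{L,F}=O^+_L$, so $\Psi_F$ is modular of weight $c_0(0)/2$ for the full $O^+_L$.

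Next I would translate this statement across the bijection $\tau\mapsto z(\tau)$ between $\H^\pm$ and $K$, restricted to $\H=\H^+\leftrightarrow K^+$, and across the identification $O^+_L=\{\sigma_\beta:\beta\in N_B^+(\O)/\Q^\ast\}\times\{\pm1\}$ recorded in the excerpt. The compatibility identity~\eqref{equation: compatibility} shows that for $\alpha\in N_B^+(\O)$ with $\iota(\alpha)=\SM{c_1}{c_2}{c_3}{c_4}$ one has $\sigma_\alpha z(\tau)\equiv z(\iota(\alpha)\tau)\bmod\C^\ast$, with the scalar factor $(c_3\tau+c_4)^2/\nm(\alpha)$. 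Combining the weight-homogeneity property (i) of $\Psi_F$ under $c\in\C^\ast$ with the automorphy property (ii) under $h=\sigma_\alpha\in O^+_L$, and setting $\psi_F(\tau)=\Psi_F(z(\tau))$, the scalar $(c_3\tau+c_4)^2/\nm(\alpha)$ raised to the power $c_0(0)/2$ gives an automorphy factor $(c_3\tau+c_4)^{c_0(0)}$ (up to the constant $\nm(\alpha)^{-c_0(0)/2}$, which we absorb into the character). Hence $\psi_F$ transforms like a modular form of weight $c_0(0)$ under the image $\iota(N_B^+(\O))$ acting on $\H$ by fractional linear transformations, with a unitary character inherited from $\chi$. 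Since $N_B^+(\O)/\Q^\ast\O_1^\ast$ (where $\O_1$ is the group of norm-one units) is precisely the Atkin--Lehner group $W_{D,N}$ extending the Fuchsian group uniformizing $X_0^D(N)$, the quotient $\H/\iota(N_B^+(\O))$ is $X_0^D(N)/W_{D,N}$, and $\psi_F$ descends to a meromorphic modular form of weight $c_0(0)$ on it.

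The one point requiring a little care — and what I expect to be the main technical obstacle — is bookkeeping the character and the $\pm1$ factor: one must check that the extra $\{\pm1\}$ component of $O^+_L$ (coming from $\sigma_\beta\mapsto-\sigma_\beta$) acts trivially on $z(\tau)$ up to scalars, so that it contributes only to $\chi$ and not to the weight, and that the constants $\nm(\alpha)^{-c_0(0)/2}$ assemble into a genuine (unitary, finite-order) character of $W_{D,N}$ rather than spoiling modularity. This is essentially the content of the compatibility diagram in~\cite{Yang-CM}, so I would cite Lemma~4 of~\cite{Yang-CM} for the precise normalization and simply record here that the halving of the weight in Theorem~\ref{B} is exactly compensated by the square $(c_3\tau+c_4)^2$ appearing in~\eqref{equation: compatibility}, producing integral weight $c_0(0)$ on the Shimura curve side.
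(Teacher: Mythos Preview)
Your proposal is correct and follows exactly the natural route: apply Theorem~\ref{B} to obtain $\Psi_F$ of weight $c_0(0)/2$ on $O^+_{L,F}=O^+_L$, then transport the modularity to $\H$ via the bijection $\tau\mapsto z(\tau)$ and the compatibility relation~\eqref{equation: compatibility}, using the identification $O^+_L=\{\sigma_\beta:\beta\in N_B^+(\O)/\Q^\ast\}\times\{\pm1\}$. The paper does not supply its own proof of this lemma but simply cites \cite[Lemma~4]{Yang-CM}; your sketch is precisely the argument given there.

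One small point of bookkeeping worth tightening: the factor $\nm(\alpha)^{-c_0(0)/2}$ is not really ``absorbed into the character'' but is part of the standard weight-$c_0(0)$ slash action for $\GL_2^+(\R)$, namely $f|_k\gamma=(\det\gamma)^{k/2}(c\tau+d)^{-k}f(\gamma\tau)$. With that normalization the resulting character on $N_B^+(\O)/\Q^\ast$ is exactly the $\chi$ coming from Theorem~\ref{B}, and unitarity is immediate. This does not affect the validity of your argument, only its phrasing.
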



\begin{Definition} \label{definition: Borcherds on Shimura}
With assumptions given as in the lemma, the function $\psi_F(\tau)$
defined by $$\psi_F(\tau)=\Psi_F(z(\tau))$$ is called the
\emph{Borcherds forms} on the Shimura curve $X^D_0(N)/W_{D,N}$
associated to $F$.
\end{Definition}

The next lemma gives us the criterion when the character of a
Borcherds form $\psi_F(\tau)$ is trivial, under the assumption that
the genus of $N^+_B(\O)\backslash\H$ is zero.

\begin{Lemma}[{~\cite[Lemma 6]{Yang-CM}}] \label{lemma: genus 0}
Assume that the genus of $X=N^+_B(\mathcal{O})\backslash\H$ is zero. Let
$\tau_1,\cdots,\tau_r$ be the elliptic points of $X$ and assume that
their orders are $b_1,\ldots,b_r,$ respectively. Assume further that,
as $\mathrm{CM}$-points, the discriminant of $\tau_1,\cdots,\tau_r$
are $d_1,\cdots,d_r,$ respectively. Let $ F(\tau)=\sum_{\eta}\left(
  \sum_{m}c_{\eta}(n)q^m\right){e_{\eta}}$ be a weakly holomorphic
vector-valued modular form of weight ${1}/{2}$ and type $\rho_L$ such
that ${O}^+_{L,F}=O^+_L$ and $c_\eta(m)\in\Z$ whenever $\eta\in
L^\vee/L$ and $m\leq 0$. Assume that $c_0(0)$ is even. Then the
Borcherds form $\psi_F(\tau)$ is a modular form with trivial character
on $X$ if and only if for $j$ such that $b_j\neq 3,$ the order of
$\Psi_F(z)$ at $z(\tau_j)$ has the same parity as $c_0(0)/2$. 
\end{Lemma}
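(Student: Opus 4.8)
The plan is to compute the character of $\psi_F$ elliptic point by elliptic point, and then to read off the resulting divisibility conditions from Theorem~\ref{B}(ii). By the preceding lemma, $\psi_F(\tau)=\Psi_F(z(\tau))$ is a meromorphic modular form of weight $k:=c_0(0)$ --- an even integer by hypothesis --- on $X=N_B^+(\O)\backslash\H=X_0^D(N)/W_{D,N}$, carrying a unitary character which I will denote $\bar\chi$; in the notation of Theorem~\ref{B} one has $\bar\chi(\gamma)=\chi(\sigma_\gamma)$. Since $X$ has genus $0$ and a Shimura curve has no cusps, the group $\Gamma=N_B^+(\O)/\Q^\ast$ is generated by elliptic elements $\gamma_1,\dots,\gamma_r$, where $\gamma_j$ fixes $\tau_j$ and has order $b_j$, subject only to the relation $\gamma_1\cdots\gamma_r=1$. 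Hence $\bar\chi$ is trivial if and only if $\bar\chi(\gamma_j)=1$ for every $j$.

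Next I would carry out the standard local analysis at each $\tau_j$. Using the uniformizer $w=(\tau-\tau_j)/(\tau-\bar\tau_j)$ of $\H$ at $\tau_j$, the element $\gamma_j$ acts by $w\mapsto\epsilon_j w$, where $\epsilon_j=\gamma_j'(\tau_j)$ is a primitive $b_j$-th root of unity, and $dw/d\tau$ is holomorphic and nonvanishing at $\tau_j$. The meromorphic $(k/2)$-differential $\psi_F(\tau)\,(d\tau)^{k/2}$ is multiplied by the scalar $\bar\chi(\gamma_j)$ under the action of $\gamma_j$ (the factor $(c\tau+d)^{-k}$ coming from $(d\tau)^{k/2}$ absorbs the weight-$k$ automorphy factor), so expanding it in powers of $w$ and comparing the lowest-order terms yields
\[
  \bar\chi(\gamma_j)=\epsilon_j^{\,m_j+k/2},\qquad\text{where}\qquad m_j:=\ord_{z(\tau_j)}\Psi_F=\ord_{\tau_j}\psi_F .
\]
Here $m_j\in\Z$: by Theorem~\ref{B}(ii) it is a finite sum of Fourier coefficients $c_{r\lambda}(-r^2\langle\lambda,\lambda\rangle/2)$, all of whose arguments are negative, hence a sum of principal-part coefficients, which are integral by hypothesis. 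Therefore $\bar\chi(\gamma_j)=1$ if and only if $b_j\mid m_j+k/2$, and it remains to unwind this divisibility in terms of $b_j$.

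The last step is to pin down the possible orders $b_j$ and to dispose of the congruence modulo $3$. The elliptic points of $X_0^D(N)$ have order only $2$ (CM discriminant $-4$) or $3$ (CM discriminant $-3$); on the quotient $X_0^D(N)/W_{D,N}$ a point fixed by an Atkin--Lehner involution picks up an extra factor of $2$ in its orbifold order, and because a point stabilizer inside $\mathrm{PSL}_2(\R)$ is cyclic, at most one Atkin--Lehner involution can fix a given point and no order-$2$ elliptic point of $X_0^D(N)$ can be fixed by one. Consequently $b_j\in\{2,3,6\}$. The \emph{decisive} input is that the character $\chi$ of a Borcherds product is of $2$-power order, as follows from Borcherds' explicit description of $\chi$ in Theorem~\ref{B}; thus $\chi$ is trivial on every elliptic element of order $3$ and takes values of order dividing $2$ on those of order $6$. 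Inserting this into the displayed identity: whenever $3\mid b_j$ we automatically get $3\mid m_j+k/2$, so for $b_j=3$ the condition $\bar\chi(\gamma_j)=1$ holds unconditionally, whereas for $b_j=6$ the divisibility $6\mid m_j+k/2$ reduces to $2\mid m_j+k/2$; and for $b_j=2$ the condition $\bar\chi(\gamma_j)=1$ already reads $2\mid m_j+k/2$. Collecting the cases, $\bar\chi$ is trivial if and only if $m_j\equiv k/2\pmod 2$ for every $j$ with $b_j\ne3$; since $k/2=c_0(0)/2$, this is exactly the assertion. I expect the main obstacle to be this external input --- that the Borcherds character is $2$-primary, hence trivial on $3$-torsion --- which is where one must lean on Borcherds' construction rather than on formal arguments, together with the (routine but indispensable) classification of the elliptic orders of $X_0^D(N)/W_{D,N}$.
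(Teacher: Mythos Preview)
The paper does not itself prove this lemma; it is quoted from \cite{Yang-CM}. The only information given here about the argument is the Remark following Assumption~\ref{assumption: 1}, which confirms that the proof in \cite{Yang-CM} proceeds exactly as you propose --- by verifying $\chi(\gamma_i)=1$ on each elliptic generator of the genus-zero Fuchsian group. Your framework and the local formula $\bar\chi(\gamma_j)=\epsilon_j^{\,m_j+k/2}$ are correct and match this approach.

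There is, however, a genuine gap in your case analysis. The classification $b_j\in\{2,3,6\}$ is wrong: the order $b_j=4$ does occur, at CM-points of discriminant $-4$ whenever $2\mid D$ and such points exist. This happens for $D=38,62,86,94,134,206$, all of which yield genus-$0$ quotients treated in this paper; the same order-$4$ structure at $\tau_{-4}$ is spelled out in Examples~\ref{example: 142} and~\ref{example: 302}. Your argument excluding this case assumes that a representative of an Atkin--Lehner involution has order~$2$ in $N_B^+(\O)/\Q^*$, so that together with the order-$2$ element $\alpha$ (the image of $\sqrt{-1}$) one would obtain two distinct involutions in a cyclic stabilizer. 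But this is false: $\gamma=1+\alpha\in\O$ has reduced norm~$2$, lies in $\Q(\alpha)$ (hence fixes the CM($-4$) point), and satisfies $\gamma^2=2\alpha$, $\gamma^4=-4\in\Q^*$; thus $\gamma$ has order~$4$ in $N_B^+(\O)/\Q^*$ and generates the stabilizer. At such a point your formula gives $\bar\chi(\gamma_j)=1\iff 4\mid m_j+k/2$, strictly stronger than the parity statement in the lemma. Your appeal to ``$\chi$ has $2$-power order'' --- which, incidentally, is not part of Theorem~\ref{B} as stated here --- does not close this gap: even a $\{\pm1\}$-valued $\chi$ would only force $2\mid m_j+k/2$ automatically at $b_j=4$, while leaving the genuine condition $4\mid m_j+k/2$ unaddressed. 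So the $b_j=4$ case requires an argument that your proposal does not supply.
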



We now state Schofer's formula {~\cite[Corollaries 1.2 and
  3.5]{Schofer}} in the setting of Shimura curves as follows.

\begin{theorem}[{\cite[Corollaries 1.2 and 3.5]{Schofer}}]
\label{Schofers formula} Let 
$F(\tau)=\sum_{\eta}\left( \sum_{m}c_{\eta}(n)q^{m}\right){e_{\eta}}$
be a weakly holomorphic vector-valued modular form of weight $1/2$ and
type $\rho_L$ for $\widetilde{\SL}(2,\Z)$ such that $O^+_{L,F}=O^+_L$,
$c_0(0)=0$, and $c_{\eta}(m)\in \Z$ whenever $\eta\in L^\vee/L$ and
$n\leq 0$. Let $d<0$ be a fundamental discriminant such that the set
$\mathrm{CM}(d)$ of $\mathrm{CM}$-points of discriminant $d$ on
$N_B^+(\O)\backslash\H$ is not empty and that the support of
$\div\psi(\tau)$ does not intersect $\mathrm{CM}(d)$. Then we have
$$
  \sum_{\tau \in\mathrm{CM}(d)}\mathrm{log}|\psi_F(\tau)|
=-\frac{|\mathrm{CM}(d)|}{4}\displaystyle\sum_{\gamma\in
  L^{\vee}/L} \sum_{m\geq 0}c_{\gamma}(-m)\kappa_{\gamma}(m),
$$ 
where $\kappa_{\gamma}(m)$ are certain sums involving derivatives of
Fourier coefficients of some incoherent Eisenstein series.
\end{theorem}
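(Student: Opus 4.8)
The statement is Schofer's average-value formula \cite[Corollaries 1.2 and 3.5]{Schofer}, specialized to the lattice $L=\O\cap V$ inside the quadratic space $V=\{x\in B:\tr x=0\}$ of signature $(1,2)$. Accordingly, the plan is to check that the hypotheses of \cite{Schofer} hold in this setting and to translate the adelic formulation there into the classical language used above; the genuinely new ideas all lie in \cite{Schofer}, and what remains is bookkeeping adapted to signature $(1,2)$. The conditions $O^+_{L,F}=O^+_L$ and integrality of the $c_\eta(m)$ for $m\le 0$ are exactly what is needed to attach the Borcherds form $\psi_F$ to $F$ via Theorem \ref{B}; the hypothesis $c_0(0)=0$ forces $\psi_F$ to have weight $0$, i.e.\ to be a meromorphic modular function on $X_0^D(N)/W_{D,N}$; and the condition that the divisor of $\psi_F$ avoid $\CM(d)$ makes the left-hand side finite and keeps the CM cycle away from the singular locus of the theta lift used below.

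To indicate why the identity holds, recall first that, by the construction underlying Theorem \ref{B}, the Petersson norm of $\Psi_F$ is a regularized theta integral,
\[
  -2\log\|\Psi_F(z)\|_{\mathrm{Pet}} \;=\; \Phi_L(z,F) \;:=\; \int^{\mathrm{reg}}_{\SL(2,\Z)\backslash\H}\bigl\langle F(\tau),\overline{\Theta_L(\tau,z)}\bigr\rangle\,\frac{du\,dv}{v^2},
\]
where $\Theta_L(\tau,z)$ is the Siegel theta function of $L$, normalized (as in \cite{Borcherds-Invent}) so that the integrand is $\SL(2,\Z)$-invariant. Since $c_0(0)=0$, the Borcherds form $\psi_F$ has weight $0$, the term proportional to $c_0(0)$ in Borcherds' formula drops out, and $\|\Psi_F\|_{\mathrm{Pet}}$ and $|\psi_F|$ agree up to a universal constant; fixing it, summing over the finitely many CM-points of discriminant $d$, and interchanging the sum with the regularized integral gives
\[
  \sum_{\tau'\in\CM(d)}\log|\psi_F(\tau')| \;=\; -\frac12\int^{\mathrm{reg}}_{\SL(2,\Z)\backslash\H}\Bigl\langle F(\tau),\;\overline{\textstyle\sum_{\tau'\in\CM(d)}\Theta_L(\tau,z(\tau'))}\Bigr\rangle\,\frac{du\,dv}{v^2}.
\]

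Next I would evaluate the theta kernel at a CM-point. A point of $\CM(d)$ corresponds to a negative-definite rational plane $U\subset V$, with $U^\perp$ a positive line, and $\Theta_L(\tau,z(\tau'))$ factors, along $V=U^\perp\perp U$ together with a compatible decomposition of the cosets in $L^\vee/L$, as the product of the holomorphic weight-$\tfrac12$ theta series of the positive-definite rank-one lattice $L\cap U^\perp$ with the Siegel theta function of the negative-definite binary lattice $L\cap U$. Letting $U$ run over the planes that make up $\CM(d)$ and invoking the Siegel--Weil formula (as in \cite{Kudla-Integral}) identifies the average of these binary theta series with a value of an Eisenstein series $E(\tau,s)$ of half-integral weight (here $3/2$) and type $\rho_L$; the local data defining $E$ are \emph{incoherent} — the archimedean place calls for a definite space of the sign opposite to what any global space could supply — so $E(\tau,0)\equiv 0$ and the nonzero contribution is the central derivative $E'(\tau,0)$. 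This is exactly why the answer is built from derivatives of Fourier coefficients of an incoherent Eisenstein series. One then checks that $\bigl\langle F(\tau),\overline{\theta_{L\cap U^\perp}(\tau)\,E'(\tau,0)}\bigr\rangle\,\tfrac{du\,dv}{v^2}$ is $\SL(2,\Z)$-invariant, inserts the Fourier expansions of $F$ and of $E'(\tau,0)$, and carries out the Rankin--Selberg unfolding of the regularized integral (following Borcherds and \cite{Schofer}); this collapses the integral to $\sum_{\gamma}\sum_{m\ge 0}c_\gamma(-m)\kappa_\gamma(m)$, where $\kappa_\gamma(m)$ packages the matching $(\gamma,m)$-Fourier-coefficient-derivative of $E$, and matching the remaining normalizations (the mass of the genus of $L\cap U$ against $|\CM(d)|$) leaves the stated constant $-|\CM(d)|/4$.

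The main obstacle — and the step I would quote from \cite{Schofer} rather than reprove — is precisely this incoherent Siegel--Weil, or ``first-derivative,'' identity, together with the explicit shape of the Fourier coefficients of $E'(\tau,0)$ entering $\kappa_\gamma(m)$; it belongs to Kudla's circle of ideas relating derivatives of incoherent Eisenstein series to the arithmetic of CM cycles and is the technical heart of the result. A secondary, purely analytic point is to justify interchanging the regularized theta integral with the finite sum over $\CM(d)$ and with the Fourier expansions, i.e.\ to control Borcherds' regularization of the divergent integral uniformly under these operations; this is routine given the estimates in \cite{Borcherds-Invent} and \cite{Schofer}, but must be re-examined with the signature-$(1,2)$ normalizations. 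Everything else — the factorization of $\Theta_L$ at a CM-point, the matching of $L^\vee/L$ with the cosets of $L\cap U^\perp$ and $L\cap U$, and the comparison of $\|\Psi_F\|_{\mathrm{Pet}}$ with $|\psi_F|$ using $c_0(0)=0$ — is careful but mechanical.
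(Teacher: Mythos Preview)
The paper does not supply its own proof of this theorem: it is stated purely as a quotation of \cite[Corollaries 1.2 and 3.5]{Schofer}, and the only comment the paper adds is a pointer to \cite{Errthum,Yang-CM} for explicit computation of the $\kappa_\gamma(m)$. Your proposal correctly recognizes this and gives a faithful outline of the argument in \cite{Schofer} (regularized theta lift, factorization of the theta kernel at CM-points, incoherent Siegel--Weil identifying the averaged binary theta with the central derivative of an Eisenstein series, and Rankin--Selberg unfolding), so there is nothing to compare against and nothing to correct.
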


We refer the reader to \cite{Errthum,Yang-CM} for strategies to
compute $\kappa_\gamma(m)$ explicitly.
\end{subsection}
\end{section}

\begin{section}{Construction of Borcherds forms}
\label{section: construction}

\begin{subsection}{Errthum's method}
  In this section, we will review Errthum's method \cite{Errthum} for
  constructing vector-valued modular forms out of scalar-valued
  modular forms. Here the notations $D$, $N$, $\O$, $L$, and etc. have
  the same meanings as those in Section \ref{subsection: Borcherds on
    Shimura}. The level $N$ is always assumed to be squarefree.

  Let us first describe the structure of the lattice $L$.

\begin{Lemma} \label{lemma: basis for O}
  Assume that $N$ is squarefree.
  Let $q$ be a prime number such that $q\equiv 1\mod 4$ and
  \begin{equation} \label{equation: condition on q}
    \JS qp=\begin{cases}
    -1, &\text{if }p|D, \\
     1, &\text{if }p|N. \end{cases}
  \end{equation}
  Then $B=\JS{DN,q}\Q$ is a quaternion algebra of discriminant $D$ over
  $\Q$. Moreover, let $a$ be an integer such that $a^2DN\equiv 1\mod
  q$. Then the $\Z$-module $\O$ generated by
  \begin{equation} \label{equation: ei}
    e_1=1, \quad e_2=\frac{1+j}2, \quad e_3=\frac{i+ij}2, \quad
    e_4=\frac{aDNj+ij}q
  \end{equation}
  is an Eichler order of level $N$ in $B$. Also, let $L$ be the set of
  elements of trace zero in $\O$ and let
  \begin{equation} \label{equation: ell i}
    \ell_1=j, \qquad \ell_2=\frac{i+ij}2, \qquad
    \ell_3=\frac{aDNj+ij}q.
  \end{equation}
  Then
  $$
    L=\Z\ell_1+\Z\ell_2+\Z\ell_3, \qquad
    L^\vee=\Z\frac{\ell_1}2+\Z\frac{\ell_2}{DN}+\Z\frac{\ell_3}{DN}.
  $$
\end{Lemma}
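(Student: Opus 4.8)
The statement has four parts, which the plan is to establish in turn: that $\operatorname{disc} B=D$; that $\O$ is an Eichler order of level $N$; the formula for $L$; and the formula for $L^\vee$. For the first, I would compute the Hilbert symbols $(DN,q)_v$ at every place $v$ of $\Q$ and check that $B=\JS{DN}{q}$ is ramified exactly at the primes dividing $D$. Since $DN>0$ and $q>0$ the norm form is isotropic over $\R$, so $B$ splits at $\infty$; for an odd prime $p\nmid DNq$ both $DN$ and $q$ are $p$-adic units, so $(DN,q)_p=1$; for odd $p\mid D$ we have $v_p(DN)=1$ and $v_p(q)=0$ (note $q\nmid DN$, since $\JS{q}{p}\neq 0$), whence $(DN,q)_p=\JS{q}{p}=-1$, and similarly $(DN,q)_p=\JS{q}{p}=1$ for odd $p\mid N$; and $(DN,q)_q=\JS{DN}{q}=1$ because $a^2DN\equiv1\bmod q$ is solvable. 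The place $p=2$ is then forced by the product formula $\prod_v(DN,q)_v=1$: the remaining factor equals $(-1)^{\#\{p\mid D:\,p\ \mathrm{odd}\}}$, which is $-1$ exactly when $2\mid D$, since $D$ (being the discriminant of an indefinite quaternion algebra over $\Q$) is a squarefree product of an even number of primes. Hence $B$ is ramified precisely at the primes dividing $D$, so $\operatorname{disc} B=D$.

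For the second part, I would first check directly---using $q\equiv1\bmod4$ and $a^2DN\equiv1\bmod q$---that $\O=\Z e_1+\cdots+\Z e_4$ is closed under multiplication: each product $e_ie_j$, expanded in terms of $1,i,j,ij$, has integral coordinates; for instance $e_2^2=e_2+\tfrac{q-1}{4}\,e_1$ and $e_4^2\in\Z\,e_1$. Being also a full lattice containing $1=e_1$, $\O$ is therefore an order. To identify the level, set $\O_0=\Z+\Z i+\Z j+\Z ij$: the change-of-basis matrix expressing $(e_1,e_2,e_3,e_4)$ in the basis $(1,i,j,ij)$ has determinant $\pm\tfrac{1}{4q}$, so $\O_0\subseteq\O$ with $[\O:\O_0]=4q$, and since the reduced discriminant of $\O_0$ is $4DNq$ that of $\O$ equals $DN$. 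As $N$ is squarefree and coprime to $D$, the local classification of quaternion orders then forces $\O_p$ to be maximal at every $p\mid D$ and every $p\nmid DN$, and to be an Eichler order of level $p$ at every $p\mid N$; hence $\O$ is an Eichler order of level $N$.

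The last two parts are direct computations. Since $\tr e_1=2$, $\tr e_2=1$, $\tr e_3=\tr e_4=0$, an element $\alpha e_1+\beta e_2+\gamma e_3+\delta e_4$ with $\alpha,\beta,\gamma,\delta\in\Z$ has trace $2\alpha+\beta$, which vanishes iff $\beta=-2\alpha$; hence $L=\Z(e_1-2e_2)+\Z e_3+\Z e_4=\Z\ell_1+\Z\ell_2+\Z\ell_3$, using $e_1-2e_2=-j=-\ell_1$, $e_3=\ell_2$, $e_4=\ell_3$. For $L^\vee$, I would write out the Gram matrix $G$ of $(\ell_1,\ell_2,\ell_3)$ for $\langle x,y\rangle=-\tr(xy)$ by expanding in the orthogonal vectors $i,j,ij$ (with $\langle i,i\rangle=-2DN$, $\langle j,j\rangle=-2q$, $\langle ij,ij\rangle=2DNq$); all entries of $G$ turn out to be integers (again using $q\equiv1\bmod4$ and $a^2DN\equiv1\bmod q$) and a short determinant calculation gives $\det G=2(DN)^2$, so $[L^\vee:L]=2(DN)^2$. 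On the other hand the lattice $M:=\Z\tfrac{\ell_1}{2}+\Z\tfrac{\ell_2}{DN}+\Z\tfrac{\ell_3}{DN}$ pairs integrally with each $\ell_i$---this is precisely the statement that the relevant entries of $G$, divided by $2$ or by $DN$, remain integral---so $M\subseteq L^\vee$; and $[M:L]=2\cdot DN\cdot DN=2(DN)^2=[L^\vee:L]$, whence $M=L^\vee$.

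I expect the only genuinely delicate point to be the determination of $(DN,q)_2$ in the first part, which cannot be read off from $q\equiv1\bmod4$ alone and must instead be obtained from the product formula together with the solvability of $a^2DN\equiv1\bmod q$. Everything else is a finite, if somewhat tedious, verification, and the integrality demanded at each step---of the structure constants of $\O$, of the entries of $G$, and of the pairings defining $L^\vee$---is exactly what the two congruence conditions on $q$ are designed to guarantee.
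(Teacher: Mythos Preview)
Your approach is correct and matches the paper's: local Hilbert symbols for $\operatorname{disc}B$, direct verification that $\O$ is a ring together with a discriminant computation, and the Gram matrix for $L$ and $L^\vee$. Two small corrections are worth noting. First, the products $e_ie_j$ do \emph{not} have integral coordinates in $1,i,j,ij$ (e.g.\ $e_2^2=\tfrac{q+1}{4}+\tfrac12 j\notin\O_0$); your own example $e_2^2=e_2+\tfrac{q-1}{4}e_1$ shows you meant the basis $e_1,\ldots,e_4$, which is exactly what the paper verifies. Second, your deduction of $(DN,q)_q=1$ from the solvability of $a^2DN\equiv 1\ \mathrm{mod}\ q$ is slightly premature, since the existence of $a$ is only introduced after the first assertion; the paper instead derives $\JS{DN}{q}=1$ from quadratic reciprocity (using $q\equiv 1\ \mathrm{mod}\ 4$ together with the hypotheses on $\JS qp$), which simultaneously justifies that such an $a$ exists. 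The remaining differences---your index computation $[\O:\O_0]=4q$ in place of the paper's direct Gram-matrix determinant for $\O$, and your product-formula treatment of $p=2$---are inessential variations on the same argument.
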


\begin{proof} The conditions in \eqref{equation: condition on q} imply
  that $B$ is ramified at prime divisors of $D$ and unramified at
  prime divisors of $N$. Also, by the quadratic reciprocity law, we
  have $\JS{DN}q=1$. Thus, the discriminant of $B$ is $D$.

  We check that
  \begin{equation*}
  \begin{split}
    e_2^2&=\frac{q-1}4e_1+e_2, \\
    e_2e_3&=\frac{aDN(q-1)}4e_1+\frac{aDN(1-q)}2e_2+\frac{1-q}2e_3
    +\frac{q(q-1)}4e_4, \\
    e_2e_4&=aDNe_1-aDNe_2-e_3+\frac{q+1}2e_4, \\
    e_3^2&=\frac{DN(1-q)}4e_1, \\
    e_3e_4&=-\frac{DN(a^2DN(q-1)+q+1)}{2q}e_1
     +\frac{DN(a^2DN(q-1)+1)}qe_2 \\
    &\qquad\quad+aDNe_3+\frac{aDN(1-q)}2e_4,
  \end{split}
  \end{equation*}
  so that $\Z e_1+\Z e_2+\Z e_3+\Z e_4$ is an order in $B$.
  Also, the Gram matrix
$$
  (\tr(e_i\overline e_j))=\begin{pmatrix}
  2 & 1 & 0 & 0\\ 1 & (q-1)/2 & 0 & -aDN \\
  0 & 0 & DN(q-1)/2 & DN \\ 0 & -aDN & DN & 2DN(1-a^2DN)/q \end{pmatrix}
$$
has determinant $(DN)^2$. Thus, it is an Eichler order of level $N$.

Moreover, it is clear that $\ell_1$, $\ell_2$, and $\ell_3$ span
$L$. Also, the Gram matrix of $L$ with respect to this basis is
\begin{equation} \label{equation: Gram L}
  \begin{pmatrix}
  -2q & 0 & -2aDN \\ 0 & DN(q-1)/2 & DN \\ -2aDN & DN
  & 2DN(1-a^2DN)/q \end{pmatrix},
\end{equation}
and its determinant is $2D^2N^2$. From the Gram matrix of $L$, it is
easy to check that $L^\vee$ is spanned byt $\ell_1/2$, $\ell_2/DN$ and
$\ell_3/DN$. This proves the lemma.
\end{proof}

\begin{Corollary}\label{corollary: level}
Assume that $N$ is squarefree.
The discriminant of the lattice $L$ is $$|L^{\vee}/L|=2(DN)^2$$ and
the level of $L$ is
\begin{equation*}
\begin{cases}4DN,& \text{ if }DN\text{ is odd}, \\
2DN,&\text{if }DN\text{ is even}.
\end{cases}
\end{equation*}
\end{Corollary}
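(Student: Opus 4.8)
The plan is to read off both invariants directly from the Gram matrix \eqref{equation: Gram L} of $L$ in the basis $\ell_1,\ell_2,\ell_3$ provided by Lemma \ref{lemma: basis for O}. First, the discriminant: Lemma \ref{lemma: basis for O} already records that $\det(\tr(\ell_i\overline\ell_j))=2D^2N^2$, and since $L^\vee/L$ is a finite abelian group whose order equals the absolute value of this Gram determinant, we immediately get $|L^\vee/L|=2(DN)^2$. (Equivalently, one can count: the lemma exhibits $L^\vee=\Z\frac{\ell_1}2+\Z\frac{\ell_2}{DN}+\Z\frac{\ell_3}{DN}$, so $L^\vee/L\cong \Z/2\oplus\Z/DN\oplus\Z/DN$, again of order $2(DN)^2$.)

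Next, the level. Recall that the level of an even lattice $L$ is the smallest positive integer $\ell$ such that $\ell\,\langle x,x\rangle/2\in\Z$ for all $x\in L^\vee$, i.e. the smallest $\ell$ making $\ell\,Q$ integral on $L^\vee$ where $Q$ is the quadratic form. So I would compute $Q$ on the generators $\ell_1/2,\ \ell_2/DN,\ \ell_3/DN$ of $L^\vee$ and also on their pairwise combinations, using the Gram matrix \eqref{equation: Gram L}. From that matrix, $\langle\ell_1,\ell_1\rangle=-2q$, $\langle\ell_2,\ell_2\rangle=DN(q-1)/2$, $\langle\ell_3,\ell_3\rangle=2DN(1-a^2DN)/q$, with off-diagonal entries $\langle\ell_1,\ell_2\rangle=0$, $\langle\ell_1,\ell_3\rangle=-2aDN$, $\langle\ell_2,\ell_3\rangle=DN$. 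Thus for $x=\tfrac12 x_1\ell_1+\tfrac1{DN}x_2\ell_2+\tfrac1{DN}x_3\ell_3\in L^\vee$ one has
$$
Q(x)=\frac{\langle x,x\rangle}{2}
=-\frac{q x_1^2}{4}+\frac{(q-1)x_2^2}{4DN}+\frac{(1-a^2DN)x_3^2}{qDN}
-\frac{a x_1 x_3}{1}+\frac{x_2 x_3}{2DN}.
$$
The denominators appearing are (a power of) $2$ from the $x_1^2$ term, $DN$ (times $2$ for the cross term $x_2x_3$), and, crucially, $qDN$ from the $x_3^2$ and $x_1x_3$ terms — but here one must use $q\mid (a^2DN-1)$ (from $a^2DN\equiv1\bmod q$) to see that $(1-a^2DN)/q$ is an integer and the cross term $-a x_1 x_3$ is already integral, so $q$ does \emph{not} divide the level. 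After clearing, the binding denominators are $4$ (from $-qx_1^2/4$, since $q$ is odd) and $2DN$ (from the $x_2x_3/(2DN)$ term and the $x_2^2(q-1)/(4DN)$ term). One then checks that $4DN$ kills all denominators, and that nothing smaller does, by exhibiting vectors in $L^\vee$ realizing the denominators $4$ and $2DN$; when $DN$ is even, $2DN$ is already divisible by $4$ (as $DN$ even forces $2DN\equiv0\bmod4$), so the level collapses to $2DN$, while when $DN$ is odd one genuinely needs $\mathrm{lcm}(4,2DN)=4DN$.

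The main obstacle is the bookkeeping of the $q$-part: one must confirm that although $q$ appears in the numerators and denominators of the Gram matrix of $L^\vee$, the congruence $a^2DN\equiv1\bmod q$ forces every relevant quadratic-form value on $L^\vee$ to be $q$-integral, so that $q\nmid$ level despite $q^2\mid |L^\vee/L|$ being false and $q\nmid DN$. Concretely, I would verify that the value of $Q$ and of the associated bilinear form $\langle\cdot,\cdot\rangle$ on every pair of generators of $L^\vee$ has denominator dividing $4DN$ (resp. $2DN$), and then pin down minimality by producing explicit elements of $L^\vee$ — e.g. $\ell_1/2$ gives $Q=-q/4$, forcing $4\mid$ level, and $\ell_2/(DN)$ gives $Q=(q-1)/(4DN)$, which combined with parity of $q-1$ forces $2DN\mid$ level in the odd case — thereby matching the two cases in the statement. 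The rest is routine verification against \eqref{equation: Gram L}.
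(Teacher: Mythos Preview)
Your approach is correct and is essentially a fleshing-out of the paper's own proof, which simply points to the Gram matrix \eqref{equation: Gram L} and the decomposition $L^\vee/L\simeq(\Z/2)\times(\Z/DN)^2$ without writing out the level computation. Reading the level off by evaluating $Q$ and the bilinear form on the generators $\ell_1/2,\ell_2/DN,\ell_3/DN$ of $L^\vee$ is exactly the right move.

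Two small corrections to your bookkeeping. First, the cross term is $x_2x_3/DN$, not $x_2x_3/(2DN)$: from \eqref{equation: Gram L} one has $\langle\ell_2,\ell_3\rangle=DN$, so $\langle \ell_2/DN,\ell_3/DN\rangle=1/DN$, and this contributes $x_2x_3/DN$ to $Q$. Second, $Q(\ell_2/DN)=(q-1)/(4DN)$ does \emph{not} by itself force $2DN\mid\ell$, because $q\equiv1\pmod 4$ makes $(q-1)/4$ an integer and you have no control over its common factors with $DN$. The clean lower bound comes instead from the bilinear form: since $\ell Q(x)\in\Z$ for all $x\in L^\vee$ implies $\ell\langle x,y\rangle\in\Z$ for all $x,y\in L^\vee$, the value $\langle\ell_2/DN,\ell_3/DN\rangle=1/DN$ forces $DN\mid\ell$, and $Q(\ell_1/2)=-q/4$ with $q$ odd forces $4\mid\ell$. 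Thus $\mathrm{lcm}(4,DN)\mid\ell$, and your verification that $\mathrm{lcm}(4,DN)$ already clears every denominator (using $q\mid(a^2DN-1)$ and $4\mid(q-1)$) gives equality. Since $DN$ is squarefree and coprime-factored, $2\mid DN$ means $2\,\|\,DN$, whence $\mathrm{lcm}(4,DN)=2DN$; otherwise it is $4DN$. With these fixes your argument goes through unchanged.
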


\begin{proof}
The result follows directly from the proof in above lemma since the
determinant of the Gram matrix in \eqref{equation: Gram L} is $2(DN)^2$
and $L^\vee/L\simeq(\Z/2)\times(\Z/DN)^2$.
\end{proof}

We now recall Errthum's method \cite{Errthum} for constructing weakly
holomorphic vector-valued modular forms. Let
  $\chi_\theta$ denote the character associated to the Jacobi theta
  function $\theta(\tau)=\sum_{n\in\Z}q^{n^2}$. That is, $\chi_\theta$
  is defined by
  $$
    \theta(\gamma\tau)=\chi_\theta(\gamma)(c\tau+d)^{1/2}\theta(\tau)
  $$
  for all $\gamma=\SM abcd\in\Gamma_0(4)$ and all $\tau\in\H$.

\begin{Lemma}[{~\cite[Theorem 4.2.9]{Bar}}]\label{lemma: Barnard}
  Let $M$ be the level of the lattice $L$. Suppose that $f(\tau)$ is a
  weakly holomorphic scalar-valued modular form of weight $1/2$ such
  that
  $$
    f(\gamma\tau)=\chi_\theta(\gamma)(c\tau+d)^{1/2}f(\tau)
  $$
  for all $\gamma=\SM abcd\in\Gamma_0(M)$. Then the function
  $F_f(\tau)$ defined by
  \begin{equation} \label{equation: Ff}
    F_f(\tau)=\sum_{\gamma\in\wt{\Gamma}_0(M)\backslash
    \wt\SL(2,\Z)}f\big|_\gamma(\tau)\rho_L(\gamma^{-1})e_0
  \end{equation}
  is a weakly holomorphic vector-valued modular form of weight $1/2$
  and type $\rho_L$.
\end{Lemma}

\begin{Lemma}[{\cite[Theorem 5.8]{Errthum}}]
  \label{lemma: OLf=OL}
  Let $f(\tau)$ and $F_f(\tau)$ be given as in previous lemma.
  Then for $\eta$ and $\eta'\in L^{\vee}/L$ with
  $\left\langle \eta,\eta\right\rangle=\left\langle
    \eta',\eta'\right\rangle$, the $e_{\eta}$ component and
  $e_{\eta'}$ component of $F_f(\tau)$ are equal. Consequently, we
  have $O^+_{L,F_f}=O^+_L$.
\end{Lemma}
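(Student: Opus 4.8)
The plan is to analyze the action of the Weil representation on the basis vector $e_0$ together with the averaging construction \eqref{equation: Ff}, showing that the $e_\eta$ component of $F_f$ depends only on the quadratic value $\langle\eta,\eta\rangle \bmod 2\Z$ (equivalently, on the $O^+_L$-orbit of $\eta$, since $O^+_L$ acts on $L^\vee/L$ preserving the quadratic form and, for the lattices at hand, acts with orbits determined exactly by $\langle\eta,\eta\rangle$). First I would make the coset sum in \eqref{equation: Ff} completely explicit: parametrize $\wt\Gamma_0(M)\backslash\wt\SL(2,\Z)$ by the classes of the matrices $S T^\ell$ (and the identity), so that $f\big|_\gamma$ ranges over the cusp expansions of $f$ and $\rho_L(\gamma^{-1})e_0$ is computed from the formulas for $\rho_L(S)$ and $\rho_L(T)$ given in Section \ref{section: Borcherds forms}. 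Applying $\rho_L(T)$ to $e_0$ is trivial (it fixes $e_0$), while $\rho_L(S)e_0 = \frac{e^{2\pi i(n-2)/8}}{\sqrt{|L^\vee/L|}}\sum_{\delta}e_\delta$ has \emph{all} coefficients equal; the point is that $\rho_L(T^\ell S)e_0$ has $e_\delta$-coefficient equal to $e^{-2\pi i \ell\langle\delta,\delta\rangle/2}$ times a constant, hence depends on $\delta$ only through $\langle\delta,\delta\rangle\bmod\frac{2}{\gcd(\ell, \cdot)}\Z$.

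The key steps, in order, are: (1) reduce the coset sum to a sum over representatives involving only powers of $T$ times $S$ (and possibly the identity coset), using that $\Gamma_0(M)$ contains $T$; (2) for each such representative $\gamma$, observe that the $e_\eta$-coefficient of $\rho_L(\gamma^{-1})e_0$ is a scalar depending on $\eta$ only via $\langle\eta,\eta\rangle \in \Q/2\Z$ — this follows by iterating the explicit $S,T$ formulas and noting that the new factors introduced are $q$-powers $e^{-\pi i \ell \langle\eta,\eta\rangle}$ and exponential sums $\sum_\delta e^{2\pi i\langle\eta,\delta\rangle}$, the latter of which, by orthogonality, again depends only on the image of $\eta$ in a quotient controlled by its quadratic value; (3) conclude that if $\langle\eta,\eta\rangle = \langle\eta',\eta'\rangle$ then the $e_\eta$ and $e_{\eta'}$ components of $F_f$ coincide. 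For the final sentence, recall from Section \ref{subsection: Borcherds on Shimura} that $O^+_L = \{\sigma_\beta : \beta\in N_B^+(\O)/\Q^\ast\}\times\{\pm1\}$ acts on $L^\vee/L$ by isometries; since $F_{\sigma\eta}$ and $F_\eta$ are both determined by the common value $\langle\sigma\eta,\sigma\eta\rangle = \langle\eta,\eta\rangle$, they are equal, so $\sigma\in O^+_{L,F_f}$ for every $\sigma\in O^+_L$, giving $O^+_{L,F_f}=O^+_L$.

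I expect the main obstacle to be step (2): verifying cleanly that after composing $\rho_L(S)$ and powers of $\rho_L(T)$, the resulting coefficient of $e_\eta$ really is a function of $\langle\eta,\eta\rangle$ alone and not of finer information about $\eta$ in $L^\vee/L$. The subtlety is the exponential sum $\sum_{\delta\in L^\vee/L} e^{2\pi i\langle\eta,\delta\rangle} g(\langle\delta,\delta\rangle)$ appearing after an $S$: one must argue that such a sum is invariant under replacing $\eta$ by any $\eta'$ with the same norm, which uses that the pairing $L^\vee/L \times L^\vee/L \to \Q/\Z$ together with the quadratic form has enough symmetry — concretely, that the finite quadratic module $(L^\vee/L, \langle\cdot,\cdot\rangle)$ here decomposes (by Corollary \ref{corollary: level}, as $(\Z/2)\times(\Z/DN)^2$ with a specific form) into pieces where the isometry group acts transitively on each norm-level. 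Since this is precisely the content invoked from \cite[Theorem 5.8]{Errthum} and \cite[Theorem 4.2.9]{Bar}, I would either cite those computations directly or carry out the Gauss-sum bookkeeping on the explicit Gram matrix \eqref{equation: Gram L}; I do not expect any conceptual difficulty beyond this, only the care needed to track the root-of-unity prefactors and the splitting of $L^\vee/L$ into its $2$-part and $DN$-part.
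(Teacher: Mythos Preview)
The paper does not prove this lemma; it records the statement with a citation to \cite[Theorem~5.8]{Errthum} and moves on, so there is no in-paper argument to compare against.

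Your high-level plan is the standard one, but step~(1) contains a genuine error: for composite $M$ --- and here $M=4D_0$ or $2DN$ is always composite --- the coset space $\widetilde\Gamma_0(M)\backslash\widetilde\SL(2,\Z)$ has $M\prod_{p\mid M}(1+1/p)$ elements, not $M+1$, and a full set of representatives must include words with several occurrences of $S$ (for instance $ST^aST^b$). That $T\in\Gamma_0(M)$ only lets you strip \emph{leftmost} powers of $T$; it does not reduce every coset to a single-$S$ word. Fortunately this is not fatal, because step~(1) is unnecessary: you should instead show that $\rho_L(\gamma^{-1})e_0$ has the desired invariance for \emph{every} $\gamma\in\widetilde\SL(2,\Z)$, by induction on word length in $S$ and $T$, without caring which $\gamma$ happen to be coset representatives. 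In fact the conclusion $O^+_{L,F_f}=O^+_L$ follows at once from the observation that $\rho_L$ commutes with the permutation action of $O(L^\vee/L)$ on $\C[L^\vee/L]$ (immediate from the defining formulas for $\rho_L(S)$ and $\rho_L(T)$, which involve only the pairing) together with the fact that $e_0$ is fixed by every isometry --- no coset enumeration or Gauss sums needed.

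The sharper first assertion, that the $e_\eta$-component depends only on the value $\langle\eta,\eta\rangle$ rather than merely on the isometry orbit of $\eta$, is where your step~(2) lives, and you are right that this is the substantive part. Be warned that the inductive claim ``$\rho_L(\gamma)e_0$ has norm-only-dependent coefficients for every $\gamma$'' is not a formality and can fail for general discriminant forms: already for $A=\Z/p^2$ with $q(x)=x^2/p^2$, the isotropic elements $0$ and $p$ lie in distinct isometry orbits and receive distinct coefficients after applying $ST^pS$. So ``by orthogonality'' does not suffice. The completing-the-square argument you allude to handles the $S$-step when the accumulated $T$-exponent is a unit modulo the group order; it is precisely the squarefree structure $L^\vee/L\simeq(\Z/2)\times(\Z/DN)^2$ from Corollary~\ref{corollary: level} that makes the remaining cases go through. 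Your final paragraph locates the difficulty correctly; just do not expect the induction to close for free without using that structure.
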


\begin{Lemma}[{\cite[Theorem 6.2]{Borcherds-Duke}}]
  \label{lemma: eta}
  Let $M$ be the level of the lattice $L$. Suppose that $r_d$, $d|M$,
  are integers satisfying the conditions
  \begin{enumerate}
  \item[(i)] $\sum_{d|M} r_d=1$,
  \item[(ii)] $|L^\vee/L|\prod_{d|M} d^{r_d}$ is a square in $\Q^*$,
  \item[(iii)] $\sum_{d|M} dr_d\equiv 0\mod 24$, and
  \item[(iv)] $\sum_{d|M} (M/d)r_d\equiv 0\mod 24$.
  \end{enumerate}
  Then $\prod_{d|M}\eta(d\tau)^{r_d}$ is a weakly holomorphic
  scalar-valued modular form satisfying the condition for $f(\tau)$ in
  Lemma \ref{lemma: Barnard}.
\end{Lemma}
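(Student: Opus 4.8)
The plan is to verify directly that an eta-quotient $f(\tau)=\prod_{d\mid M}\eta(d\tau)^{r_d}$ satisfying conditions (i)--(iv) transforms under $\Gamma_0(M)$ exactly like $\theta(\tau)$ does, i.e. with the theta-multiplier $\chi_\theta$ and automorphy factor $(c\tau+d)^{1/2}$. This is precisely what Lemma \ref{lemma: Barnard} requires as input, so once the transformation law is established the conclusion is immediate. The computation rests entirely on the classical transformation formula for the Dedekind eta function, which I would quote in the Ligozat/Newman form: for $\SM abcd\in\Gamma_0(M)$ and an eta-quotient $g=\prod_{d\mid M}\eta(d\tau)^{r_d}$ with $\sum_{d\mid M} r_d = k\in 2\Z$ (so that $g$ has integral weight $k/2$) one has a clean formula; but here $\sum r_d = 1$ by (i), so the weight is $1/2$ and the multiplier system is genuinely metaplectic and involves a Jacobi-symbol factor together with $\theta$-type $8$th roots of unity.

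The key steps, in order, are as follows. First I would recall the general eta-multiplier: writing $s=\sum_{d\mid M} d\,r_d$, condition (iii) says $s\equiv 0\pmod{24}$, which kills the $q^{s/24}$-type obstruction and makes the multiplier a root of unity rather than involving a fractional power of $c\tau+d$ beyond the half-integral one; dually, condition (iv), $\sum_{d\mid M}(M/d)r_d\equiv 0\pmod{24}$, is the analogous condition at the cusp $0$ (equivalently, it controls behaviour under the Fricke involution conjugation needed to move between cusps) and guarantees $f$ is holomorphic — in fact weakly holomorphic, since we only demand a pole at $\infty$ — rather than having an essential singularity or bad behaviour. Second, I would track the Jacobi-symbol part of the multiplier: for half-integral weight $k/2$ with $k$ odd, the eta-multiplier on $\Gamma_0(M)$ contains a factor of the form $\JS{c}{d}$ or $\JS{(-1)^{?}\,\prod d^{r_d}}{d}$ (with the usual Kronecker-symbol conventions and a sign correction for $d<0$), and one compares this with the explicit theta-multiplier $\chi_\theta(\SM abcd)=\JS cd \varepsilon_d^{-1}$ on $\Gamma_0(4)$. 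Condition (ii) — that $|L^\vee/L|\prod_{d\mid M} d^{r_d}$ is a square — is exactly the arithmetic condition forcing the Jacobi-symbol character of the eta-quotient to agree with that of $\theta$, since $|L^\vee/L|=2(DN)^2$ by Corollary \ref{corollary: level} so that $\prod d^{r_d}$ differs from a square only by the relevant factor of $2$, matching the $\JS 2d$-type discrepancy between the naive and the $\theta$-normalised multiplier. Third, having matched both the $8$th-root-of-unity part (via (iii), (iv)) and the Jacobi-symbol part (via (ii), and (i) for the weight), I conclude $f\big|_\gamma = \chi_\theta(\gamma)\,f$ for all $\gamma\in\Gamma_0(M)$, which is the hypothesis of Lemma \ref{lemma: Barnard}; weak holomorphy is clear because an eta-quotient is holomorphic and non-vanishing on $\H$ and meromorphic at the cusps, and conditions (iii), (iv) ensure the orders at the cusps are such that the only possible pole is at $\infty$ (and even that is allowed).

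The main obstacle is bookkeeping the multiplier system precisely. The transformation formula for $\eta$ under a general $\SM abcd\in\SL(2,\Z)$ with $c>0$ involves the Dedekind sum $s(d,c)$ and a $24$th root of unity $e^{\pi i(\,(a+d)/(12c) - s(d,c) - 1/4)}$; assembling $\prod_d \eta(d\tau)^{r_d}$ requires applying this for each $d$ with the substituted matrix $\SM{a}{bd}{c/d}{d}$ (valid since $d\mid c$ when $\SM abcd\in\Gamma_0(M)$ and $d\mid M$), and then showing that conditions (iii) and (iv) make the accumulated Dedekind-sum and $(a+d)$-terms collapse modulo $1$ to exactly the phase in $\chi_\theta$. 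This is a known but somewhat delicate computation — it is essentially the content of \cite[Theorem 6.2]{Borcherds-Duke}, so rather than reproduce it I would cite that theorem for the statement and limit the exposition to checking that the normalisation matches our lattice $L$, i.e. that ``level of $L$'' in the sense used here coincides with the modulus $M$ for which $\chi_\theta$ is defined and that $|L^\vee/L|=2(DN)^2$ is consistent with condition (ii). A secondary, easier point to record is that the weight comes out to be exactly $1/2$ because of (i), matching the weight demanded in Lemma \ref{lemma: Barnard}.
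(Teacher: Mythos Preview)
The paper gives no proof of this lemma; it simply cites \cite[Theorem 6.2]{Borcherds-Duke} and moves on. Your sketch is therefore not competing with anything in the paper, and as an outline of why (i)--(iv) force the eta-quotient to transform with the theta multiplier it is broadly correct: (i) fixes the weight at $1/2$, (iii) and (iv) are the standard Ligozat/Newman congruences that make the accumulated eta-multiplier descend to a character of $\Gamma_0(M)$, and (ii) together with $|L^\vee/L|=2(DN)^2$ pins that character to $\chi_\theta$.

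There is, however, one genuine error in your write-up. You assert that conditions (iii) and (iv) ``ensure the orders at the cusps are such that the only possible pole is at $\infty$.'' This is false. Those two congruences control the \emph{multiplier system} (equivalently, they force the orders at $\infty$ and at $0$ to be integers so that the transformation under $T$ and its Fricke-conjugate is trivial); they say nothing about the \emph{signs} of those orders, and they impose no constraint at the other cusps beyond what is already forced by modularity. Indeed, the paper immediately goes on (Section \ref{subsection: D odd}) to use admissible eta-products with poles at both $\infty$ and $0$, and the integer-programming problems in Proposition \ref{proposition: admissible span} show explicitly that the cusp orders are governed by separate inequalities, not by (iii)--(iv). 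Weak holomorphy of an eta-quotient is automatic --- each $\eta(d\tau)$ is holomorphic and nonvanishing on $\H$ and has finite order at every cusp --- so you should simply delete the clause about pole locations rather than try to justify it.
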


\begin{Definition} \label{definition: admissible eta}
  If an eta-product satisfies the conditions in
  Lemma \ref{lemma: eta}, then we say it is \emph{admissible}.
\end{Definition}

To have a better control over the divisors of Borcherds forms
constructed, we will use certain special admissible eta products.

\begin{Definition} Let $M$ be the level of the lattice $L$ and let $S$ be a
  subset of the cusps of $\Gamma_0(M)$. If $f$ is a weakly holomorphic
  modular form of weight $1/2$ on $\Gamma_0(M)$ whose only poles are at the cusps
  in $S$, then we say $f$ is a $S$-weakly holomorphic scalar-valued
  modular form of weight $1/2$ on $\Gamma_0(M)$.
\end{Definition}

Later on, we will use $\{\infty\}$-weakly holomorphic
modular forms to construct Borcherds forms for the case of even $D$
and $\{\infty,0\}$-weakly holomorphic modular forms for the case of
odd $D$. Therefore, let us introduce the following definitions.

\begin{Definition}
  Let $D_0$ be the odd part of $DN$. We let $M^!(4D_0)$ denote the
  space of all $\{\infty\}$-weakly holomorphic modular forms of weight
  $1/2$ on $\Gamma_0(4D_0)$. Also, for a nonnegative integer $n$, let
  $M^!_n(4D_0)$ be the subspace of $M^!(4D_0)$ consisting of modular
  forms with a pole of order $\le n$ at $\infty$.
  If $j$ is a positive integer such that there does not exist a
  modular form in $M^!(4D_0)$ with a pole of order $j$ at $\infty$,
  then we say $j$ is a \emph{gap} of $M^!(4D_0)$.

  Similarly, we let $M^{!,!}(4D_0)$ be the space of all
  $\{\infty,0\}$-weakly holomorphic modular forms of weight $1/2$ on
  $\Gamma_0(4D_0)$. For nonnegative integers $m$ and $n$, let
  $M^{!,!}_{m,n}(4D_0)$ be the subspace of $M^{!,!}(4D_0)$ consisting
  of modular forms with a pole of order $\le m$ at $\infty$ and a pole
  of order $\le n$ at $0$.
\end{Definition}


\begin{Remark}
  Notice that the space $M^!_0(4D_0)$ is simply the space of holomorphic
  modular forms of weight $1/2$ on $\Gamma_0(4D_0)$. Since $D_0$ is
  assumed to be squarefree, by Theorem A of \cite{Serre-Stark}, the
  space $M^!_0(4D_0)$ is one-dimensional and spanned by $\theta(\tau)$.
\end{Remark}

\end{subsection}

\begin{subsection}{Case of even $D$}
\label{subsection: D even}

In this section, we assume that $D$ is even and $N$ is squarefree.
In Proposition \ref{proposition: admissible span}, we will
see how the problem of constructing Borcherds forms becomes the
problem of solving certain integer programming problem. Ultimately, in
Proposition \ref{proposition: all Borcherds}, we will show that for
$(D,N)$ in Theorem \ref{theorem: main} with $2|D$, every meromorphic
modular form of even weight on $X_0^D(N)/W_{D,N}$ with divisor
supported on CM-divisors (see Definition \ref{definition: CM-divisor})
can be realized as a Borcherds form. Note that Bruinier
\cite{Bruinier-converse} and Heim and Murase \cite{Heim-Murase}
studied when a modular form on an orthogonal group $O(n,2)$ can be
realized as a Borcherds form, but as the integer $n$ is assumed to be
at least $2$, their results do not apply to the case of Shimura
curves. In fact, it is pointed out in
\cite[Section 1]{Bruinier-converse} that counterexamples exist in the
case $n=1$ (see also \cite[Section 8.3]{Bruinier-Ono}).
It will be a very interesting problem to characterize those
modular forms on Shimura curves 
  $X_0^D(N)/W_{D,N}$ that can be realized as Borcherds forms.

Let $D_0$ be the odd part of $DN$. Then according to Corollary
\ref{corollary: level}, the level of the lattice under consideration
is $4D_0$.
 Let us first determine the dimensions of $M^!_n(4D_0)$. 

\begin{Lemma} \label{lemma: dimension}
  Let $D_0$ be the odd part of $DN$ and $g$ be the genus
  of the modular curve $X_0(4D_0)$. Then for a nonnegative integer
  $n$ with
  $$
    n\ge 2g-2-\sum_{d|D_0}\gauss{d/4},
  $$
  we have
  $$
    \dim_\C M^!_n(4D_0)=n+\sum_{d|D_0}\gauss{d/4}+1-g.
  $$
  Moreover, the number of gaps of $M^!(4D_0)$ is $g-\sum_{d|D_0}\gauss{d/4}$.
\end{Lemma}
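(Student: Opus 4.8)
The plan is to apply the Riemann--Roch theorem to the modular curve $X_0(4D_0)$. A weight $1/2$ modular form on $\Gamma_0(4D_0)$ with the theta-multiplier $\chi_\theta$ that is holomorphic on $\H$ and allowed poles only at $\infty$ can be packaged as follows: dividing by $\theta(\tau)$ turns it into a meromorphic modular \emph{function} on $X_0(4D_0)$ whose only pole is at $\infty$ (of order $\le n$) but which may acquire poles at the zeros of $\theta$ and along the cusps other than $\infty$. So the first step is to pin down the divisor of $\theta(\tau)$ as a weight $1/2$ form on $\Gamma_0(4D_0)$: $\theta$ has no zeros on $\H$, and its order at each cusp of $\Gamma_0(4D_0)$ must be computed. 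The key point is that the total ``degree'' of this divisor equals $1/2 \cdot \frac{1}{24}[\SL(2,\Z):\Gamma_0(4D_0)]$ (half the weight times the index over $24$), and one finds that $\theta$ vanishes to a positive order precisely at a certain collection of cusps; with $D_0$ squarefree, the cusps of $\Gamma_0(4D_0)$ are indexed by divisors $d \mid 4D_0$, and the order of vanishing of $\theta$ at the cusp corresponding to $d$ works out to something like $\gauss{d/4}$ (suitably interpreted), which is where the sum $\sum_{d\mid D_0}\gauss{d/4}$ enters.

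Concretely, I would set $D := \div(\theta)$ as a divisor on $X_0(4D_0)$ (a rational, or at worst half-integral, divisor supported on cusps), and observe that $f \mapsto f/\theta$ identifies $M^!_n(4D_0)$ with the space of meromorphic functions $h$ on $X_0(4D_0)$ satisfying $\div(h) \ge -D - n\cdot(\infty)$, i.e.\ with $H^0(X_0(4D_0), \mathcal{O}(D + n\cdot(\infty)))$ (after clearing the half-integral parts against the component of $D$ at $\infty$, since the pole at $\infty$ is the one being relaxed). Then Riemann--Roch gives
$$
\dim_\C M^!_n(4D_0) = \deg(D + n\cdot(\infty)) + 1 - g + \dim H^1 = n + \sum_{d\mid D_0}\gauss{d/4} + 1 - g + \dim H^1,
$$
using $\deg D = \sum_{d\mid D_0}\gauss{d/4}$ (the cuspidal contributions, once the half-integral part at $\infty$ is absorbed). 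The condition $n \ge 2g - 2 - \sum_{d\mid D_0}\gauss{d/4}$ is exactly the condition that forces $\deg(D + n\cdot(\infty)) > 2g - 2$, hence $H^1 = 0$ by Serre duality, yielding the stated formula.

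For the count of gaps: the map $n \mapsto \dim M^!_n(4D_0)$ is nondecreasing, jumps by at most $1$ each step, and $j$ is a gap precisely when $\dim M^!_j = \dim M^!_{j-1}$. Since $\dim M^!_0(4D_0) = 1$ (the Serre--Stark remark: holomorphic weight $1/2$ forms on $\Gamma_0(4D_0)$, $D_0$ squarefree, are spanned by $\theta$) and $\dim M^!_n(4D_0) = n + \sum_{d\mid D_0}\gauss{d/4} + 1 - g$ for all large $n$, the total number of indices $n \ge 1$ at which no jump occurs is
$$
\Big(\lim_{n\to\infty}\big(n - \dim M^!_n(4D_0)\big)\Big) - \big(0 - \dim M^!_0(4D_0)\big) = \big(g - \sum_{d\mid D_0}\gauss{d/4} - 1\big) - (-1) = g - \sum_{d\mid D_0}\gauss{d/4}.
$$
This is a standard ``Weierstrass gap'' bookkeeping argument once the two endpoints of the dimension function are known.

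The main obstacle I anticipate is the precise local analysis of $\theta$ at the cusps of $\Gamma_0(4D_0)$: identifying the cusps when $D_0$ is squarefree, computing the width and the order of vanishing of $\theta$ at each, and checking that the half-integrality only occurs at $\infty$ (or at cusps that can be harmlessly grouped with $\infty$) so that the divisor $D$ is genuinely a divisor on the curve and the Riemann--Roch count is clean. This requires care with the transformation formula for $\theta$ under $\Gamma_0(4)$ and its behavior under the degeneracy maps coming from $D_0 \mid D_0$; but it is a finite, explicit computation, and the answer $\sum_{d\mid D_0}\gauss{d/4}$ is exactly what makes the rest of the argument go through.
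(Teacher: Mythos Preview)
Your approach is essentially the same as the paper's: divide by $\theta$, compute the divisor of $\theta$ on $X_0(4D_0)$, apply Riemann--Roch, and use Serre--Stark plus a gap-counting argument for the second assertion. The paper resolves your anticipated obstacle directly by stating that $\div\theta = \sum_{d\mid D_0}\tfrac{d}{4}(C_{2d})$ (where $C_d$ denotes the cusp $1/d$); the quarter-integral orders become the floors $\gauss{d/4}$ not by any subtle grouping with $\infty$, but simply because $g=f/\theta$ is a genuine modular \emph{function}, hence has integral order at every point, so $\ord_{C_{2d}}(g)\ge -d/4$ is equivalent to $\ord_{C_{2d}}(g)\ge -\gauss{d/4}$.
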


\begin{proof} Let $\theta(\tau)=\sum_nq^{n^2}$ be the Jacobi theta
  function. For a divisor $d$ of $4D_0$, let $C_d$ represent the cusp
  $1/d$. As a modular form on $\Gamma_0(4D_0)$, we have
  $$
    \div\theta=\sum_{d|D_0}\frac{d}4(C_{2d}).
  $$
  A modular form $f$ is contained in $M^{!}_{n}(4D_0)$ if and only if
  the modular function $g=f/\theta$ on $\Gamma_0(4D_0)$ satisfies
  $$
    \div g\ge -n(\infty)-\sum_{d|D_0}\gauss{d/4}(C_{2d}).
  $$
  Then by the Riemann-Roch theorem, when $n$ is a nonnegative integer
  such that $n\ge 2g-2-\sum_{d|D_0}\gauss{d/4}$, the dimension of the
  space $M^{!}_{n}(4D_0)$ is
  $$
    n+\sum_{d|D_0}\gauss{d/4}+1-g.
  $$
  Now since $D_0$ is squarefree, by Theorem A of \cite{Serre-Stark},
  the space $M_0^!(4D_0)$ is one-dimensional and spanned by
  $\theta$, which implies that there is no modular form in
  $M^!(4D_0)$ having a zero at $\infty$. Therefore, from the dimension
  formula for $M^!_n(4D_0)$, we see that the number of gaps is $n+1-\dim
  M_n^!(4D_0)=g-\sum_{d|D_0}\gauss{d/4}$.
\end{proof}

\begin{Proposition} \label{proposition: admissible span}
  For $(D,N)$ in Theorem \ref{theorem: main} with even $D$, the space
  $M^!(4D_0)$ is spanned by admissible eta-products. Moreover,
  there exists a positive integer $m$ such that, for each positive
  integer $j\ge m$, there exists a modular form $f_j$ in
  $M^!(4D_0)\cap\Z((q))$ whose order of pole at $\infty$ is $j$ and whose
  leading coefficient is $1$.
\end{Proposition}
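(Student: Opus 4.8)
The plan is to split the proposition into its two assertions and handle them in turn, using the dimension formula of Lemma~\ref{lemma: dimension} as the main bookkeeping tool. For the first assertion, that $M^!(4D_0)$ is spanned by admissible eta-products, I would first use Lemma~\ref{lemma: dimension} to pin down, for each of the finitely many pairs $(D,N)$ appearing in Theorem~\ref{theorem: main} with even $D$, the genus $g$ of $X_0(4D_0)$, the quantity $\sum_{d|D_0}\gauss{d/4}$, and hence the exact set of gaps of $M^!(4D_0)$ and the dimension of $M^!_n(4D_0)$ for every $n$. Since there is no gap $0$ (the space $M^!_0(4D_0)$ is spanned by $\theta$), it suffices to produce, for each non-gap $j\ge 1$, an admissible eta-product with a pole of order exactly $j$ at $\infty$ and holomorphic elsewhere; together with $\theta$ these will span $M^!(4D_0)$ by a triangularity argument on the order of the pole at $\infty$. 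So the task reduces to: for each $(D,N)$ on the list, exhibit enough admissible eta-products $\prod_{d|4D_0}\eta(d\tau)^{r_d}$ — where admissibility is the explicit congruence-and-square conditions (i)--(iv) of Lemma~\ref{lemma: eta} — whose pole orders at $\infty$ realize every non-gap value up to some bound, and then observe that the generators already found allow one to fill in all larger pole orders by multiplication.

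That last observation is exactly the bridge to the second assertion. Once $M^!(4D_0)$ is spanned by admissible eta-products, pick an element $h\in M^!(4D_0)\cap\Z((q))$ with a pole of order equal to the \emph{largest} non-gap $j_0$ (equivalently, such that $j_0, j_0+1, j_0+2,\dots$ are all non-gaps — beyond the finitely many gaps every integer is a non-gap, by Lemma~\ref{lemma: dimension}); normalize its leading coefficient to $1$, which is possible because $h$ can be taken to be a $\Q$-linear, indeed $\Z$-linear after clearing denominators, combination of eta-products, all of which have integral $q$-expansions. Now set $m=j_0$. For any $j\ge m$, the function $f_j := h\cdot \theta(\tau)^{?}$ does not quite work since $\theta$ has no pole; instead use $f_j := h \cdot E$ where $E$ is an eta-product of weight $0$... — more cleanly: since $j$ and $j-1$ are both $\ge m$ hence non-gaps, and since $\dim M^!_j(4D_0) = \dim M^!_{j-1}(4D_0)+1$ for $j\ge m$, there is an $f_j\in M^!_j(4D_0)$ with a pole of order exactly $j$; taking $f_j$ in the span of the integral eta-products and the previously constructed $f_{j-1},\dots$ we may arrange $f_j\in\Z((q))$ with leading coefficient $1$ by scaling and subtracting integer multiples of lower-order forms. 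Induction on $j$ starting from $j=m$ completes this part.

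The genuinely laborious step — and the one I expect to be the main obstacle — is the first assertion: actually finding the requisite admissible eta-products for every $(D,N)$ on the list. Conditions (i)--(iv) of Lemma~\ref{lemma: eta} are a system of one linear equation, one square condition, and two divisibility-by-$24$ conditions on the exponent vector $(r_d)_{d|4D_0}$, while the pole-order constraint $\sum_d \tfrac{24}{\mathrm{lcm}}$-type expression (the order of $\prod\eta(d\tau)^{r_d}$ at $\infty$ is $-\tfrac{1}{24}\sum_d d\, r_d$ up to normalization, and the orders at other cusps must be nonnegative) is another family of linear inequalities; demanding that the pole occur only at $\infty$ forces the eta-product to be $\{\infty\}$-weakly holomorphic, which is a strong restriction. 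Showing that solutions exist realizing \emph{every} non-gap pole order is precisely an integer feasibility problem, and this is where the paper's computational approach (AMPL/Gurobi, as mentioned in the introduction) enters: the honest statement of the proof is that one reduces to finitely many integer programs and reports that each is feasible, with the explicit eta-products tabulated. I would therefore structure the write-up as: (a) reduce to the non-gap pole orders via Lemma~\ref{lemma: dimension}; (b) formulate the search for an admissible $\{\infty\}$-weakly holomorphic eta-product of prescribed pole order as an integer program in the $r_d$; (c) assert feasibility for each required pole order and each $(D,N)$ on the list, deferring the explicit exponent vectors to a table; and (d) deduce the second assertion by the triangular/inductive argument above. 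Verifying (c) is routine per instance but not illuminating, so I would present it as a finite check rather than grind through it, exactly as the introduction signals.
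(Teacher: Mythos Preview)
Your overall architecture matches the paper's---reduce to a finite computation per $(D,N)$ via integer programming, then propagate---but the propagation mechanism you end up using for the second assertion is broken, and you discard the correct idea in favor of it.

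The device you mention in passing (``$f_j := h\cdot E$ where $E$ is an eta-product of weight $0$'') and then abandon is exactly what the paper relies on: a weight-$0$ eta-product $t(\tau)$ that is a modular \emph{function} on $\Gamma_0(4D_0)$ with a unique pole at $\infty$, of some order $k$, and leading coefficient $1$. (For $D_0=p$ prime the paper takes $t=\eta(4\tau)^4\eta(2p\tau)^2/(\eta(2\tau)^2\eta(4p\tau)^4)$, with $k=(p-1)/2$.) Multiplication by $t$ sends $M^!_n(4D_0)$ into $M^!_{n+k}(4D_0)$, takes admissible eta-products to admissible eta-products, preserves $\Z((q))$, and preserves leading coefficient $1$. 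With $t$ in hand, both assertions reduce to a genuinely \emph{finite} problem: span the single space $M^!_{n_0+k}(4D_0)$ by admissible eta-products, and exhibit $f_j\in M^!(4D_0)\cap\Z((q))$ with pole of exact order $j$ and leading coefficient $1$ for the $k$ values of $j$ in one window $[m,m+k-1]$. Everything beyond follows by multiplying by powers of $t$. Note that the paper's scheme needs the window of $f_j$'s \emph{and} the single $t$, not a single $h$ and a family of $E$'s as your aside suggests.

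Your replacement ``induction on $j$'' does not work. Knowing $\dim M^!_j=\dim M^!_{j-1}+1$ gives existence of some form with pole of order exactly $j$, but provides no operation that passes from $f_{j-1}$ to $f_j$; there is no weight-$0$ map that raises the pole order by exactly $1$. And ``scaling to leading coefficient $1$'' can force you out of $\Z((q))$: if the $q^{-j}$-coefficients of all available eta-products share a common prime factor, no integer combination has leading coefficient $1$. The paper's own Remark after Example~\ref{example: 26-1} shows this obstruction is real (it already occurs at $D_0=51$). Multiplication by $t$ sidesteps this precisely because $t$ has leading coefficient $1$, so leading coefficients are literally preserved from the finite window outward. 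You should reinstate the weight-$0$ eta-product idea and make it do the work for both assertions; without it, your step (c) is not a finite check and your step (d) does not go through.
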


\begin{proof}
  Let $g$ be the genus of the modular curve $X_0(4D_0)$ and set
  $$
    n_0=\max(2g-2-\sum_{d|D_0}\gauss{d/4},0).
  $$
  According to Lemma \ref{lemma: dimension}, if $n$ is an integer such
  that $n\ge n_0$, then there exists a modular form in $M^!(4D_0)$
  with a pole of order $n$ at $\infty$. Now suppose that we can find
  an eta-product $t(\tau)$ such that $t$ is a modular function on
  $\Gamma_0(4D_0)$ with a unique pole at $\infty$. Let $k$ be the
  order of the pole of $t$ at $\infty$. Now Lemma \ref{lemma:
    dimension} implies that for each integer $j\ge n_0$, there is a
  modular form in $M^!(4D_0)$ with a pole of order $j$ at $\infty$.
  Thus, for all $n\ge n_0$, we have
  $$
    M^!_{n+k}(4D_0)=M^!_n(4D_0)+t M^!_n(4D_0).
  $$
  Therefore, to prove the assertion about $M^!(4D_0)$, it suffices to
  find such a modular function $t$ and show that the space
  $M^!_{n_0+k}(4D_0)$ can be 
  spanned by eta-products and that there exists a positive integer
  $m\ge n_0$ such that for each integer $j$ with $m\le j\le m+k-1$,
  there exists a modular form in $M^!_{j}(4D_0)\cap\Z((q))$ whose
  order of pole at $\infty$ is $j$ and whose leading coefficient is $1$.

  Consider the case of a maximal order first. Assume that $D=2p$ for
  some odd prime $p$. By Lemma \ref{lemma: eta}, for an eta-product
  $\prod_{d|4p}\eta(d\tau)^{r_d}$ to be admissible, the integers $r_d$
  must satisfy
  \begin{equation} \label{equation: integer programming 1}
  \begin{array}{rcrcrcrcrcrl}
 r_1& + &r_2& + &r_4& + &r_p& + &r_{2p}& + &r_{4p}&=1 \\
    &   &r_2&   &  &   &   & + &r_{2p} &   &     &=1+2\delta_2 \\
    &   &   &   &  &  & r_p & + &r_{2p}& + &r_{4p}&=2\delta_p \\
 r_1&+&2r_2&+&4r_4&+&pr_p&+&2pr_{2p}&+&4pr_{4p}&=24\epsilon_1 \\
 4pr_1&+&2pr_2&+&pr_4&+&4r_p&+&2r_{2p}&+&r_{4p}&=24\epsilon_2 \\ 
  \end{array}
  \end{equation}
  for some integers $\delta_2$, $\delta_p$, $\epsilon_1$, and $\epsilon_2$.
  Moreover, the congruence subgroup $\Gamma_0(4p)$ has $6$ cusps,
  represented by $1/c$ with $c|4p$. The orders of the eta function
  $\eta(d\tau)$ at these cusps, multiplied by $24$, are given by
  $$ \extrarowheight3pt
  \begin{array}{c|cccccccccc} \hline\hline
  & 1 & 1/2 & 1/4 & 1/p & 1/2p &  1/4p  \\ \hline
  \eta(\tau) & 4p & p & p & 4& 1& 1\\ 
  \eta(2\tau) & 2p & 2p & 2p & 2& 2& 2 \\
  \eta(4\tau)  & p & p & 4p & 1& 1& 4 \\
  \eta(p\tau) & 4 & 1 & 1 & 4p & p & p \\
  \eta(2p\tau)  & 2 & 2 & 2 & 2p & 2p & 2p \\ 
  \eta(4p\tau)  & 1 & 1 & 4 & p & p & 4p \\
  \hline\hline
  \end{array}
  $$
  Thus, in order for an eta-product to be in $M^!_n(4p)$, the
  exponents $r_d$ should satisfy
  \begin{equation} \label{equation: integer programming 2}
  \begin{array}{rcrcrcrcrcrl}
  r_1&+&2r_2&+&4r_4&+&pr_p&+&2pr_{2p}&+&4pr_{4p}&\ge -24n \\
  r_1&+&2r_2&+&r_4&+&pr_p&+&2pr_{2p}&+&pr_{4p}&\ge 0 \\
  4r_1&+&2r_2&+&r_4&+&4pr_p&+&2pr_{2p}&+&pr_{4p}&\ge 0\\
  pr_1&+&2pr_2&+&4pr_4&+&r_p&+&2r_{2p}&+&4r_{4p}&\ge 0\\
  pr_1&+&2pr_2&+&pr_4&+&r_p&+&2r_{2p}&+&r_{4p}&\ge 0\\
  4pr_1&+&2pr_2&+&pr_4&+&4r_p&+&2r_{2p}&+&r_{4p}&\ge 0\\
  \end{array}
  \end{equation}
  In literature, problems of solving a set of equalities and
  inequalities in integers are called \emph{integer programming
    problems}. Solving \eqref{equation: integer programming 1} and
  \eqref{equation: integer programming 2} using the AMPL modeling
  language (http://www.ampl.com) and the gurobi solver
  (http://www.gurobi.com), we can produce many admissible eta-products.

  To find $t$, we replace the first two equations in \eqref{equation:
    integer programming 1} by $r_1+r_2+r_4+r_p+r_{2p}+r_{4p}=0$ and
  $r_2+r_{2p}=2\delta_2$ and solve the integer programming problem. We
  find that we can choose
  $$
    t(\tau)=\frac{\eta(4\tau)^4\eta(2p\tau)^2}{\eta(2\tau)^2\eta(4p\tau)^4}
  $$
  with $k=(p-1)/2$.

  In the other cases when $N>1$, $N$ is always a prime. The modular
  curve $X_0(4D_0)$ has $12$ cusps and there are more inequalities and
  equalities in the integer programming problem. Nevertheless, we can
  easily find $t$ and many admissible eta-products by solving the
  integer programming problem.

  Having found $t(\tau)$ and many admissible
  eta-products, we check case by case that eta-products do span
  $M^!_{n_0+k}(4p)$ and that there exists positive integer $m\ge n_0$
  such that for each integer $j$ with $m\le j\le m+k-1$,
  there exists a modular form $f_j\in M^!_{n_0+k}(4D_0)\cap\Z((q))$
  whose order of pole at $\infty$ is $j$ and whose leading coefficient
  is $1$. (Sometimes, $f_j$ will be a linear combination of
  eta-products with rational coefficients. To show that all Fourier
  coefficients are integers, we use Sturm's theorem.) Here we
  omit the details, providing only one example as below.
\end{proof}

\begin{Example} \label{example: 26-1}
Consider the case $D=26$ and $N=1$. The modular curve $X_0(52)$ has
genus $5$. Thus, by Lemma \ref{lemma: dimension}, the number of gaps
of $M^!(52)$ is $5-\sum_{d|13}\lfloor d/4\rfloor=2$. The modular function
$$
  t(\tau)=\frac{\eta(4\tau)^4\eta(26\tau)^2}{\eta(2\tau)^2\eta(52\tau)^4}
$$
has a unique pole of order $6$ at $\infty$. According to the proof of
Proposition \ref{proposition: admissible span}, we need to show that
the space $M^!_{11}(52)$ can be spanned by eta-products. Using the gurobi
solver, we find the following solutions
$(r_1,r_2,r_4,r_{13},r_{26},r_{52})$ to the integer
programming problem in \eqref{equation: integer programming 1} and
\eqref{equation: integer programming 2} with $n=11$
\begin{equation*}
\begin{split}
 &(-3 , 6 , 0 , -3 , 11 , -10), \ (-1 , 3 , 1 , 3 , 2 , -7),
  (3 , -3 , 3 , -1 , 8 , -9), \ (1 , 1 , 1 , 1 , 4 , -7), \\
 &(-1 , 1 , 1 , 3 , 4 , -7), \ (0 , -3 , 6 , -2 , 8 , -8),
  (3 , -1 , 1 , -1 , 6 , -7), \ (-5 , 12 , -4 , -1 , 5 , -6), \\
 &(-1 , 2 , 0 , -5 , 15 , -10), \ (1 , -1 , 1 , 1 , 6 , -7),
  (1 , 3 , -1 , 1 , 2 , -5), \ (-1 , 3 , -1 , 3 , 2 , -5), \\
 &(3 , 1 , -1 , -1 , 4 , -5), \ (-2 , 3 , 2 , 0 , 2 , -4),
  (0 , -1 , 2 , -2 , 6 , -4), \ (-2 , 5 , -2 , 0 , 0 , 0).
\end{split}
\end{equation*}
Suitable linear combinations of these eta-products
$\prod_{d|52}\eta(d\tau)^{r_d}$ yield a basis consisting of
\begin{equation*}
\begin{split}
  f_0=1+2q+2q^4+2q^9+\cdots, \qquad
  &f_3=q^{-3}+q^{-1}+q^3+q^9+\cdots \\
  f_4=q^{-4}-q^{-1}-q+q^3+\cdots, \qquad
  &f_5=q^{-5}+q^{-2}-2q+q^2+\cdots, \\
  f_6=q^{-6}+q^{-2}-2q+2q^2+\cdots, \qquad
  &f_7=q^{-7}-q^{-2}+2q-q^2+\cdots, \\
  f_8=q^{-8}+q^{-2}+q^2+2q^5+\cdots, \qquad
  &f_9=q^{-9}+2q^{-1}+3q+2q^3+\cdots, \\
  f_{10}=q^{-10}+3q^{-1}+q-q^3+\cdots, \qquad
  &f_{11}=q^{-11}+2q^2+q^5+4q^7+\cdots,
\end{split}
\end{equation*}
for the space $M^!_{11}(52)$. In fact, since all these modular forms
have integral coefficients, multiplying these $f_j$ by powers of
$t$, we find that for each non-gap positive integer $j$, there exists
a modular form $f_j$ in $M^!(52)\cap\Z((q))$ with a pole of order $j$
at $\infty$ and a leading coefficient $1$.
\end{Example}

\begin{Remark}
Quite curiously, our computation shows that whenever $N=1$, i.e.,
whenever $D_0=p$ is an odd prime, the space $M^!(4D_0)$ has the
property that for each non-gap positive integer $j$, there exists a
modular form $f$ in $M^!(4D_0)\cap\Z((q))$ such that $f$ has a pole of
order $j$ at $\infty$ with leading coefficient $1$.

The smallest $D_0$ such that $M^!(4D_0)$ does not have this property
is $D_0=51$. We can show that the gaps of $M^!(204)$ are
$1,\ldots,14$, and $20$ and there exists a modular form $f$ in
$M^!(204)\cap\Z((q))$ with a Fourier expansion
$2q^{-22}-q^{-20}-2q^{-14}+2q^{-12}+\cdots$. As $20$ is a gap, there
cannot exist $g\in M^!(204)\cap\Z((q))$ with a Fourier expansion $q^{-22}+\cdots$.
\end{Remark}

We now show that for $(D,N)$ in Theorem \ref{theorem: main} with even
$D$, all meromorphic modular forms of even
weights on $X_0^D(N)/W_{D,N}$ with divisors supported on CM-divisors,
which we define below, can be realized as Borcherds forms.

\begin{Definition} \label{definition: CM-divisor}
For a negative discriminant $d$, we
let $\CM(d)$ denote the set of CM-points of discriminant $d$ on
$X_0^D(N)/W_{D,N}$, $h_d=|\CM(d)|$, and $P_d$ be the divisor
$$
  P_d=\sum_{\tau\in\CM(d)}\tau.
$$
(If $h_d=0$, then $P_d$ simply means $0$.) We call $P_d$ the
\emph{$\CM$-divisor} of discriminant $d$. Note that sometimes we
wish to keep track the degree of the divisor $P_d$. In such as a case,
we will write $P_d^{\times h_d}$ instead of $P_d$.
\end{Definition}


\begin{Lemma} \label{lemma: divisor at infinity}
  Let $f$ be an element in $M^!(4D_0)\cap\Z((q))$, $F_f$ be the
  vector-valued modular form constructed using $f$ as given by
  \eqref{equation: Ff}, and $\psi_{F_f}(\tau)$ be the Borcherds form
  on $X_0^D(N)/W_{D,N}$ corresponding to $F_f$ as defined in
  Definition \ref{definition: Borcherds on Shimura}. Suppose that the
  Fourier expansion of $f$ is $\sum_m c_mq^m$. Then
  $$
    \div\psi_{F_f}=\sum_{m<0}c_m
    \sum_{r\in\Z^+,4m/r^2\text{ is a discriminant}}\frac1{e_{4m/r^2}}
    P_{4m/r^2},
  $$
  where $e_d$ is the cardinality of the
  stabilizer subgroup of $\tau\in\CM(d)$ in $N_B^+(\O)/\Q^\ast$.
\end{Lemma}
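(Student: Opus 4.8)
The plan is to combine Borcherds' product formula (Theorem \ref{B}(ii)), the explicit description of the lattice $L$ and its dual from Lemma \ref{lemma: basis for O} and Corollary \ref{corollary: level}, and the construction \eqref{equation: Ff} of $F_f$ from $f$. First I would analyze which $\eta\in L^\vee/L$ can contribute to a given negative Fourier exponent. By Theorem \ref{B}(ii), zeros and poles of $\Psi_{F_f}$ lie on the divisors $\lambda^\perp$ with $\lambda\in L$, $\langle\lambda,\lambda\rangle>0$, with order $\sum_{0<r\in\Q,\,r\lambda\in L^\vee}c_{r\lambda}(-r^2\langle\lambda,\lambda\rangle/2)$, where $c_\gamma(\cdot)$ denotes the Fourier coefficients of $F_f$. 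The key point from Lemma \ref{lemma: OLf=OL} is that $c_\gamma(m)$ depends only on the norm $\langle\gamma,\gamma\rangle$ (equivalently $\nm(\gamma)=-\langle\gamma,\gamma\rangle/2$), so I must track how the principal part of $f$ distributes over the components $e_\eta$ under \eqref{equation: Ff}. Specifically, I would show that for $m<0$ the coefficient $c_\eta(m)$ equals $c_{\rho m}$ (the Fourier coefficient of the scalar form $f$ at a suitably rescaled exponent) when $m$ is achievable as a norm for the class $\eta$, using the fact that $f\in M^!(4D_0)$ has its pole only at $\infty$ and that the summands $f|_\gamma$ for $\gamma\notin\widetilde\Gamma_0(4D_0)$ contribute no negative powers of $q$ in the relevant sense; this is exactly the mechanism already used in the excerpt's references \cite{Errthum, Yang-CM}.

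Next I would translate the datum $(\lambda,r)$ into a CM-point and its discriminant. For $\lambda\in V=\{x\in B:\tr x=0\}$ with $\nm(\lambda)>0$, the orthogonal complement $\lambda^\perp$ inside $\widetilde K^+$ corresponds under $\tau\mapsto z(\tau)$ to a single point $\tau_\lambda\in\H$, which is a CM-point; the order of $\Z[\lambda]$ (or rather the discriminant attached to $\tau_\lambda$) is governed by $\langle\lambda,\lambda\rangle$ and the index of $\Z\lambda$ in $L^\vee\cap\Q\lambda$. Writing $d=4m/r^2$ when $r\lambda$ runs over primitive vectors of $L^\vee$ proportional to $\lambda$, I would check that $-r^2\langle\lambda,\lambda\rangle/2$ ranges over exactly the discriminants $d$ with $m<0$ and that the corresponding CM-point on $N_B^+(\O)\backslash\H = X_0^D(N)/W_{D,N}$ has discriminant $d$. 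Here the normalization constants $e_d$ enter: a single point $\tau\in\CM(d)$ downstairs is the image of $e_d$ copies of $\lambda^\perp$-type divisors upstairs (or equivalently, the stabilizer in $N_B^+(\O)/\Q^\ast$ has order $e_d$), so summing Borcherds' order formula over the $O^+_L$-orbit of $\lambda$ and pushing down to the Shimura curve produces the weight $1/e_d$ on $P_d$.

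Finally I would assemble these pieces: $\div\psi_{F_f}=\sum_{\lambda}(\text{order at }\lambda^\perp)\,[\lambda^\perp]$, reorganize the sum by first fixing the primitive direction and the scaling $r$, then grouping $O^+_L$-orbits into CM-divisors $P_d$, and identify the multiplicity of $P_d$ as $\frac1{e_d}\sum_{r}c_{4m/r^2\text{-class}}(\cdots) = \frac1{e_d}\sum_{m<0,\ r:\ 4m/r^2=d} c_m$ after substituting the scalar Fourier coefficients. Reindexing by $m$ and $r$ with $d=4m/r^2$ yields the stated double sum. The main obstacle I expect is the bookkeeping in the second step: correctly matching the lattice-theoretic data $(\lambda,r)$ — primitivity in $L$ versus in $L^\vee$, the factor of $2$ in $L^\vee=\Z\frac{\ell_1}2+\Z\frac{\ell_2}{DN}+\Z\frac{\ell_3}{DN}$, and the precise definition of the discriminant of a CM-point as an element of $\CM(d)$ on the \emph{quotient} $X_0^D(N)/W_{D,N}$ rather than on $X_0^D(N)$ — to the arithmetic data $(d,e_d)$, and verifying that the condition ``$4m/r^2$ is a discriminant'' is exactly the condition for $r\lambda\in L^\vee$ to have the right norm. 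Once the identification of orbits of rational quadratic divisors with CM-points (together with the stabilizer count $e_d$) is pinned down — which is implicit in the setup of Section \ref{subsection: Borcherds on Shimura} and in \cite{Yang-CM} — the rest is a direct rewriting of Theorem \ref{B}(ii).
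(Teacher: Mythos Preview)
Your proposal is correct and follows the same underlying approach as the paper: the paper's proof simply cites Proposition~5.4 of \cite{Errthum} and Lemma~7 of \cite{Yang-CM}, and what you have sketched is precisely the content of those two results --- extracting the principal part of $F_f$ from the $\infty$-only pole of $f$, applying Theorem~\ref{B}(ii), and identifying the rational quadratic divisors $\lambda^\perp$ with CM-points of the appropriate discriminant together with the stabilizer factor $e_d$. Your anticipated bookkeeping obstacle (primitivity in $L$ versus $L^\vee$, the factor of $2$, the discriminant condition $4m/r^2$) is exactly what those cited lemmas handle, so there is no gap.
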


\begin{proof} This follows from Proposition 5.4 of \cite{Errthum}
  and Lemma 7 of \cite{Yang-CM}.
\end{proof}

\begin{Proposition} \label{proposition: all Borcherds}
  For $(D,N)$ in Theorem \ref{theorem: main} with $2|D$,
  all meromorphic modular forms of even weights on $X_0^D(N)/W_{D,N}$
  with a divisor supported on CM-divisors can be realized as Borcherds
  forms.
\end{Proposition}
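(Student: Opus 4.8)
Proof plan for Proposition \ref{proposition: all Borcherds}.

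\emph{Strategy.}
The plan is to reduce the statement to the combinatorial facts about the space $M^!(4D_0)$ already established in Proposition \ref{proposition: admissible span} and Lemma \ref{lemma: dimension}, together with the divisor formula in Lemma \ref{lemma: divisor at infinity}. A meromorphic modular form of even weight $2k$ on $X_0^D(N)/W_{D,N}$ with divisor supported on CM-divisors is determined up to a nonzero scalar by its divisor, since $X_0^D(N)/W_{D,N}$ has genus $0$ for the $(D,N)$ in Theorem \ref{theorem: main} (so the only holomorphic functions are constants and two forms with the same divisor differ by a constant). So it suffices to show that every divisor of the form $\sum_d a_d P_d$ (sum over negative discriminants $d$, with $a_d\in\Z$) that is the divisor of \emph{some} weight-$2k$ meromorphic modular form arises as $\div\psi_{F_f}$ for a suitable $f\in M^!(4D_0)\cap\Z((q))$ — and that the resulting Borcherds form has the correct weight and trivial character.

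\emph{Main steps.}
First, by Lemma \ref{lemma: divisor at infinity}, the divisor of $\psi_{F_f}$ for $f=\sum_m c_m q^m$ is a $\Z$-linear (in fact $\Z$-nonnegative-combination of the $c_m$, $m<0$) combination of the CM-divisors $\frac1{e_d}P_d$, where the discriminant $d$ ranges over $4m/r^2$. The key point is that, using the basis of $M^!(4D_0)$ produced in Proposition \ref{proposition: admissible span} — a form $f_j$ with leading term $q^{-j}$ for each non-gap $j$, plus $\theta$ — one has essentially free control over the principal-part coefficients $c_{-j}$ at $\infty$, \emph{except} at the finitely many gaps. So I would first treat the gaps: one must check, for each of the finitely many $(D,N)$ in the table, that the CM-divisor contributions coming from the gap-values of $m$ are already obtainable from combinations of non-gap $f_j$'s (because several distinct $m$ can give the same discriminant $4m/r^2$, the gap at one $m$ need not obstruct realizing the corresponding $P_d$). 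Second, match weights: the weight of $\psi_{F_f}$ is $c_0(0)$, the constant term of the $e_0$-component of $F_f$, which by the averaging construction \eqref{equation: Ff} is a linear function of the principal part of $f$; one shows this can be adjusted to any prescribed even value $2k$ by adding a suitable multiple of $\theta$ (whose Borcherds form is a weight-$1$ form, or rather by a bookkeeping argument on $c_0(0)$). Third, ensure the character is trivial: invoke Lemma \ref{lemma: genus 0}, checking the parity condition at the elliptic points $\tau_j$ with $b_j\ne 3$; since the divisor is prescribed, these parities are forced by the divisor, and one verifies that the target divisor (being the divisor of an \emph{actual} modular form of even weight) automatically satisfies them. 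Finally, integrality: since the $f_j$ lie in $\Z((q))$ and the $c_{-j}$ we need are integers (forced by the order coefficients in Lemma \ref{lemma: divisor at infinity} being integers for an honest divisor), $f\in M^!(4D_0)\cap\Z((q))$, so Theorem \ref{B} applies.

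\emph{Expected main obstacle.}
The hard part will be the gap analysis: a priori the gaps of $M^!(4D_0)$ could obstruct prescribing the coefficient $c_{-j}$ for a value of $j$ that is the \emph{only} $m$ producing a given discriminant $d=4m$ (namely when $4m$ itself is a fundamental or near-fundamental discriminant not of the form $4m'/r^2$ for any smaller $|m'|$). I expect one must argue, case by case over the finite list in Appendix A, that every gap $j$ of $M^!(4D_0)$ is such that $4j$ (and each $4j/r^2$) is a discriminant also reachable as $4m/r'^2$ for some non-gap $m$, or else that no even-weight form on $X_0^D(N)/W_{D,N}$ has a divisor involving $P_{4j}$ with a coefficient forcing the obstruction — in other words, that the gaps are "harmless" for the CM-divisors that actually occur. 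This is where the explicit tables (the genus computations, the list of discriminants of elliptic and CM points) are indispensable, and it is the step that cannot be made uniform; the proof will necessarily reduce to a finite verification, of which Example \ref{example: 26-1} is the prototype.
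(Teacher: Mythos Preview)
Your plan is broadly on the right track and uses the same ingredients as the paper, but it diverges from the paper's argument in two places, and one of them is a genuine gap.

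\textbf{Weight adjustment.} Your proposal to hit an arbitrary even weight by adding multiples of $\theta$ is not justified and is not what the paper does. Adding $\theta$ changes $c_0(0)$ of $F_f$ by some fixed amount that depends on the averaging in \eqref{equation: Ff}; you have not shown this amount is nonzero, let alone that it lets you reach every even integer while keeping the parity conditions of Lemma~\ref{lemma: genus 0} satisfied. The paper avoids this entirely by a cleaner decomposition: it first constructs, for each $(D,N)$, a \emph{single} Borcherds form $\psi$ of weight~$2$ with trivial character (explicitly, e.g.\ from $f=f_{26}-f_{13}+2f_7$ when $(D,N)=(26,1)$), and separately shows that every weight-$0$ modular function with CM-supported divisor is a Borcherds form. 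Then any weight-$2k$ form $\phi$ with CM divisor yields $\phi/\psi^{k}$ of weight~$0$ with CM divisor, hence a Borcherds form, hence so is $\phi$. You should adopt this two-step reduction.

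\textbf{Handling gaps and correction terms.} Your gap analysis is not how the paper proceeds, and misses the key mechanism. The paper does not try to reach a gap discriminant from a non-gap $m$. Instead it exploits the existence of integers $j$ (such as $j=7$ for $(D,N)=(26,1)$) for which \emph{no} CM-points of discriminant $-4j/r^2$ exist on $X_0^D(N)$; the form $f_j$ then contributes nothing to $\div\psi_{F_f}$ but still has nonzero coefficients at the gap positions $q^{-1},q^{-2}$. These ``null'' $f_j$ are the free parameters used to force the coefficient of $q^{-2}$ (which \emph{is} a gap) to the value required for the correct order at $P_{-8}$, and to satisfy the parity conditions in \eqref{equation: 26 condition 2}. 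For the weight-$0$ claim the paper then shows, for each discriminant $d$, how to realize $P_d^{\times h_d}-h_dP_{-8}$ as $\div\psi_{F_f}$ using $f_{|d|}$ (or $f_{|d|/4}$) plus a suitable multiple of $f_7$, with a M\"obius-inversion for non-fundamental $d$. Your finite verification will still be needed, but the thing to verify is the existence of enough such null correction terms, not reachability of gap discriminants.
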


\begin{proof} We will prove only the case $(D,N)=(26,1)$. The proof of
  the other cases is similar.

  We claim that
  \begin{enumerate}
  \item[(i)] there is a Borcherds form $\psi$ of weight $2$ with a trivial
    character, and
  \item[(ii)] every modular function on $X_0^{26}(1)/W_{26,1}$ with
    divisor supported on CM-divisors can be realized as a Borcherds
    form.
  \end{enumerate}
  Then observe that if $\phi$ is a modular form of even weight $k$,
  then $\phi/\psi^{k/2}$ has weight $0$. The two claims imply that
  $\phi$ can be realized as a Borcherds form.

  The Shimura curve $X_0^{26}(1)/W_{26,1}$ has genus $0$ and precisely
  five elliptic points of order $2$. Among the five elliptic points,
  one is a CM-point of discriminant $-8$, one is a CM-point of
  discriminant $-52$, and the remaining three are CM-points of
  discriminant $-104$. Also, if $\psi$ is a meromorphic modular form
  of even weight $k$ on $X_0^{26}(1)/W_{26,1}$, then the degree of
  $\div\psi$ is $k/4$. Thus, by Lemmas \ref{lemma: genus 0} and \ref{lemma:
    divisor at infinity}, for $f=\sum_mc_mq^m\in M^!(52)\cap\Z((q))$,
  the Borcherds form $\psi_{F_f}$ has even weight $k$ and a trivial
  character if and only if
  \begin{equation} \label{equation: 26 condition 1}
    \sum_{m<0}c_m\sum_{r\in\Z^+,4m/r^2\text{ is a discriminant}}
    \frac1{e_{4m/r^2}}|\CM(4m/r^2)|=k/4
  \end{equation}
  and
  \begin{equation} \label{equation: 26 condition 2}
    \sum_{m=-2n^2}c_m\equiv\sum_{m=-13n^2}c_m\equiv
    \sum_{m=-26n^2}c_m\equiv k/2 \mod 2.
  \end{equation}

  Now from Example \ref{example: 26-1}, we know that for each $j\ge
  3$, there exists a unique element $f_j$ in $M^!(52)\cap\Z((q))$ such
  that its Fourier expansion is of the form
  $f_j=q^{-j}+c_{-2}q^{-2}+c_{-1}q^{-1}+\cdots$. In particular, we find
  \begin{equation*}
  \begin{split}
    f_7&=q^{-7}-q^{-2}+2q+\cdots,\\
    f_{13}&=q^{-13}-q^{-2}-2q^{-1}+q+\cdots, \\
    f_{26}&=q^{-26}+q^{-1}-q+\cdots.
  \end{split}
  \end{equation*}
  The modular form
  $$
    f=f_{26}-f_{13}+2f_7=q^{-26}-q^{-13}+2q^{-7}-q^{-2}+3q^{-1}+2q+\cdots,
  $$
  satisfies the conditions in \eqref{equation: 26 condition 1} and
  \eqref{equation: 26 condition 2} with $k=2$. (Note that there do not
  exist CM-points of discriminants $-4$ and $-7$ on the Shimura curve
  $X_0^{26}(1)$, so the presence of the terms $q^{-7}$ and $q^{-1}$
  will not contribute anything to the divisor of the Borcherds form.)
  This proves Claim (i).

  To prove Claim (ii), it suffices to show that for each discriminant
  $d<0$, there exists a modular form $f$ in $M^!(52)\cap\Z((q))$
  satisfying \eqref{equation: 26 condition 1} and \eqref{equation: 26
    condition 2} with $k=0$ such that $\div\psi_{F_f}=P_d^{\times
    h_d}-h_dP_{-8}$. For the special cases $d=-52$ and $d=-104$, we
  may choose $f$ to be $2f_{13}$ and $2f_{26}+6f_7$, respectively. If
  $d\neq-52,-104$ and $d$ is a fundamental discriminant, we choose
  $f$ to be $f_{|d|}+af_7$ with a proper integer $a$ such that the
  coefficient of $q^{-2}$ is $-2h_d$. (If $4|d$, we may choose
  $f_{|d|/4}+bf_7$ instead.) Now assume that $d$ is not a fundamental
  discriminant, say, $d=d_0n^2$ for some fundamental discriminant
  $d_0$. We let $a$ be the integer such that the coefficient of
  $q^{-2}$ in $f=\sum_{r|n}\mu(r)f_{|d|/r^2}+af_7$ is $-2h_d$, where
  $\mu(r)$ is the M\"obius function. Then $\div\psi_{F_f}=P_d^{\times
    h_d}-h_dP_{-8}$. (The case $d_0=-8$ needs a special treatment, but
  it is completely analogous.) This proves Claim (ii) and hence the
  proposition for the case $(D,N)=(26,1)$. 
\end{proof}

\end{subsection}

\begin{subsection}{Case of odd $D$}
\label{subsection: D odd}
The construction of Borcherds forms in the case of odd $D$ is a
little more complicated than the case of even $D$. The idea of
using $\{\infty\}$-weakly holomorphic modular forms to construct
Borcherds forms is no longer sufficient for our purpose. The reason is
that if the divisor of a Borcherds form arising from a
$\{\infty\}$-weakly holomorphic modular form is supported at a
CM-point of discriminant $d$, $d\equiv 1\mod 4$, then it also is
supported at CM-points of discriminant $4d$. However, in practice, we
are often required to construct Borcherds forms whose divisors are
supported at CM-points of discriminant $d$, but not at CM-points of
discriminant $4d$. Thus, in the case of odd $D$, we will need to
use $\{\infty,0\}$-weakly holomorphic modular forms to construct
desired Borcherds forms.

Assume that $D$ is odd and $N$ is squarefree. As usual, we let $\O$ be
an Eichler order of level $N$ in the quaternion algebra $B$ of
discriminant $D$, and $L$ be the lattice formed by elements of trace
$0$ in $\O$. For convenience, for a modular form $f$, we let $P(f)$
denote the principal part of $f$ at $\infty$, i.e., the sums of the terms with
negative exponents in the Fourier expansion of $f$. Similarly, for a
vector-valued modular form $F=\sum_{\eta\in L^\vee/L}F_\eta e_\eta$, we let
$$
  P(F)=\sum_\eta P(F_\eta)e_\eta.
$$

\begin{Lemma}Let $M$ be the level of $L$. Suppose $f$ is a 
  $\{\infty,0\}$-weakly holomorphic scalar-valued modular of weight
  $1/2$ on $\Gamma_0(M)$ and $F_f$ was given in Lemma \ref{lemma:
    Barnard}. Assume that $P(f|_{1/2}S)=\sum_{n>0}b_nq^{-n/M}$. Then
  $$
    P(F_f)=P(f)e_0+\frac{Me^{2\pi i/8}}{\sqrt{|L^\vee/L|}}
    \sum_{n>0}b_nq^{-n/M}\sum_{\eta\in L^\vee/L,\nm(\eta)\in n/M+\Z}
    e_\eta.
  $$
\end{Lemma}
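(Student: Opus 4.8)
The plan is to compute the principal part of $F_f$ directly from its defining formula \eqref{equation: Ff}, by splitting the coset sum $\wt\Gamma_0(M)\backslash\wt\SL(2,\Z)$ according to the cusp to which the coset sends $\infty$. Since $f$ is $\{\infty,0\}$-weakly holomorphic, it has negative Fourier exponents only ``at'' the two cusps $\infty$ and $0$ of $\Gamma_0(M)$; every other cusp of $\Gamma_0(M)$ contributes a term $f|_\gamma$ that is holomorphic at $\infty$ and hence adds nothing to $P(F_f)$. So only two families of cosets survive: those with $\gamma\in\wt\Gamma_0(M)$ (up to the stabilizer of $\infty$), giving the term coming from $f$ itself, and those represented by $\gamma = \gamma_0 T^k$ with $\gamma_0$ a scaling matrix sending $\infty$ to $0$, which we may take to be (a metaplectic lift of) $S$ composed with powers of $T$.

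First I would isolate the $e_0$-contribution: the identity coset contributes $f(\tau)\rho_L(1)e_0 = f(\tau)e_0$, whose principal part is exactly $P(f)e_0$. The width of the cusp $\infty$ in $\Gamma_0(M)$ is $1$, so no other coset in $\wt\Gamma_0(M)\backslash\wt\SL(2,\Z)$ fixes $\infty$, and no translate of $f$ by a non-identity coset that still fixes $\infty$ occurs. Next I would handle the cusp $0$: its width in $\Gamma_0(M)$ is $M$, so the cosets sending $\infty$ to $0$ are represented by $\wt S\wt T^k$ for $k = 0, 1, \ldots, M-1$ (using $S(\infty)=0$). For each such coset, $f|_{\gamma^{-1}}$ near $\infty$ is governed by $f|_{1/2}S$, whose principal part is $\sum_{n>0}b_n q^{-n/M}$ by hypothesis; applying $\rho_L(T^{-k})$ and then $\rho_L(S^{-1})$ to $e_0$ and summing over $k$ is the computation that produces the $e_\eta$ sum.

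The key calculation is therefore: $\sum_{k=0}^{M-1} q^{-n/M}\,\zeta_M^{?}\,\rho_L(S^{-1}S^{?})\cdots$ — more precisely, one uses $\rho_L(T)e_\eta = e^{-\pi i\nm(\eta)}e_\eta$ (so $\rho_L(T^{-k})$ acts on the $q^{-n/M}$-coefficient by a root of unity depending on $\nm(\eta)\bmod 1$ and on $k$), and then $\rho_L(S)e_0 = \frac{e^{2\pi i(n-2)/8}}{\sqrt{|L^\vee/L|}}\sum_{\delta}e_\delta$ with $n=1$ here, i.e. the factor $e^{2\pi i(1-2)/8}=e^{-2\pi i/8}$; inverting gives the stated $e^{2\pi i/8}$. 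Summing the geometric-type sum over $k=0,\dots,M-1$ forces $\nm(\eta)\equiv n/M \pmod 1$, which is where the constraint $\nm(\eta)\in n/M+\Z$ comes from, and collapses the prefactor to $M e^{2\pi i/8}/\sqrt{|L^\vee/L|}$. I would also note that, since we only want the \emph{principal} part, I may discard all the holomorphic tails of each $f|_\gamma$ as well as all the contributions from cusps other than $\infty$ and $0$ without further justification beyond the weakly-holomorphic-at-$\{\infty,0\}$ hypothesis.

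The main obstacle I anticipate is bookkeeping of the metaplectic cocycles and roots of unity: keeping straight which representative $\gamma$ one uses for the cosets mapping to $0$, tracking the automorphy factor $\sqrt{c\tau+d}$ through $f|_\gamma$ versus $\rho_L(\gamma^{-1})$, and checking that the character $\chi_\theta$ of $f$ interacts correctly with $\rho_L$ so that the $k$-sum is a clean geometric sum with no leftover phase. This is essentially the content of Errthum's computation in \cite{Errthum}, adapted to allow a pole at $0$ as well as at $\infty$, so I would set up the cosets carefully, invoke $\rho_L(S)$ and $\rho_L(T)$ from the definitions above, and reduce the claim to the orthogonality identity $\sum_{k=0}^{M-1}e^{2\pi i k(\nm(\eta)-n/M)} = M$ or $0$. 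Everything else is routine substitution.
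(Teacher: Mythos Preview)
Your proposal is correct and follows essentially the same route as the paper's proof: split the coset sum according to the cusp $\gamma\infty$, discard all cosets mapping to cusps other than $\infty$ and $0$ by the $\{\infty,0\}$-weakly holomorphic hypothesis, take the identity coset for the $P(f)e_0$ term, take $ST^j$ for $j=0,\ldots,M-1$ as representatives for the cusp $0$, apply $\rho_L(S^{-1})e_0=\frac{e^{2\pi i/8}}{\sqrt{|L^\vee/L|}}\sum_\eta e_\eta$ and $\rho_L(T^{-j})e_\eta=e^{2\pi i j\,\nm(\eta)}e_\eta$, and collapse the $j$-sum via orthogonality. Two small slips: in the formula \eqref{equation: Ff} it is $f|_\gamma$, not $f|_{\gamma^{-1}}$, that appears; and the compatibility with $\chi_\theta$ is already absorbed into Lemma~\ref{lemma: Barnard}, so no extra bookkeeping is needed there.
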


\begin{proof} Since $f$ is of $\{\infty,0\}$-weakly holomorphic, if
  $\gamma$ is an element of $\SL(2,\Z)$ such that $\gamma\infty$ is
  not equivalent to the cusp $\infty$ or $0$, then we have
  $P(f|_{1/2}\gamma)=0$. Now $\gamma=I$ is the only right coset
  representative of $\Gamma_0(M)$ in $\SL(2,\Z)$ with
  $\gamma\infty\sim\infty$ and $\gamma=ST^j$, $j=0,\ldots,M-1$, are the
  only right coset representatives with $\gamma\infty\sim 0$. Thus,
  $$
    P(F_f)=P(f)e_0
   +\sum_{j=0}^{M-1}P(f\big|_{1/2}ST^j)\rho_L(T^{-j}S^{-1})e_0.
  $$
  Since
  $$
    \rho_L(S^{-1})e_\delta=\frac{e^{2\pi i/8}}{\sqrt{|L^\vee/L|}}
    \sum_{\eta\in L^\vee/L}e^{-\gen{\eta,\delta}}e_\eta,
  $$
  we find
  \begin{equation*}
  \begin{split}
    P(F_f)&=P(f)e_0+\frac{e^{2\pi i/8}}{\sqrt{|L^\vee/L|}}
    \sum_{j=0}^{M-1}P(f\big|_{1/2}ST^j)\rho_L(T^{-j})
    \sum_{\eta\in L^\vee/L}e_\eta \\
  &=P(f)e_0+\frac{e^{2\pi i/8}}{\sqrt{|L^\vee/L|}}
    \sum_{n>0}b_nq^{-n/M}\sum_{j=0}^{M-1}\sum_{\eta\in L^\vee/L}
    e^{2\pi ij(-n/M+\nm(\eta))}e_\eta \\
  &=P(f)e_0+\frac{Me^{2\pi i/8}}{\sqrt{|L^\vee/L|}}
    \sum_{n>0}b_nq^{-n/M}\sum_{\eta\in L^\vee/L,\nm(\eta)\in n/M+\Z}e_\eta.
  \end{split}
  \end{equation*}
  This proves the lemma.
\end{proof}

In general, the principal part
$e^{2\pi i/8}|L^\vee/L|^{-1/2}P(f|_{1/2}S)$ in the 
lemma lie in $\C[q^{-1/M}]$. For our purpose, we will only
consider those $f$ such that
$$
  P(f)\in\Z[q^{-1}], \qquad
  \frac{Me^{2\pi i/8}}{\sqrt{|L^\vee/L|}}P(f|_{1/2}S)
  \in\Z[q^{-1/4}].
$$

\begin{Lemma}\label{lemma: divisor infinity 0} Let $f$ be as in the
  lemma above. Suppose that $P(f)$ and $P(f|_{1/2}S)$ are of the form
  $$
    P(f)=\sum_{n>0,n\in\Z}a_nq^{-n}, \qquad
    \frac{Me^{2\pi i/8}}{\sqrt{|L^\vee/L|}}P(f|_{1/2}S)
   =\sum_{n>0,n\in\Z}b_nq^{-n/4}
  $$
  for some integers $a_n$ and $b_n$. Then
  \begin{equation*}
  \begin{split}
    \div\psi_{F_f}&=\sum_na_n\sum_{r\in\Z^+,-4n/r^2\text{ is a discriminant}}
    \frac1{e_{-4n/r^2}}P_{-4n/r^2} \\
  &\qquad+\sum_nb_n\sum_{r\in\Z^+,-N^2n/r^2\text{ is a discriminant}}
    \frac1{e_{-N^2n/r^2}}P_{-N^2n/r^2},
  \end{split}
  \end{equation*}
  where $e_d$ is the cardinality of the
  stabilizer subgroup of a CM-point of discriminant $d$ in $N_B^+(\O)/\Q^*$.
\end{Lemma}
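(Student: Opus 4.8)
The plan is to derive the formula from Borcherds' Theorem~\ref{B} applied to $F_f$, fed with the description of the principal part $P(F_f)$ furnished by the preceding lemma, via exactly the pushforward argument behind Lemma~\ref{lemma: divisor at infinity}. By Theorem~\ref{B}, the divisor of $\Psi_{F_f}$ on $\widetilde K^+$ is a sum over rational quadratic divisors $\lambda^\perp$, with $\lambda\in L$, $\langle\lambda,\lambda\rangle>0$, the multiplicity of $\lambda^\perp$ being $\sum_{0<r\in\Q,\,r\lambda\in L^\vee}c_{r\lambda}(-r^2\langle\lambda,\lambda\rangle/2)$. Since $\langle\lambda,\lambda\rangle>0$ forces the index $-r^2\langle\lambda,\lambda\rangle/2=-r^2\nm(\lambda)$ to be negative, only the coefficients occurring in $P(F_f)$ enter, and by the preceding lemma these are precisely the $a_n$ (the $e_0$-component, coefficients of $q^{-n}$) together with the $b_n$ (the common coefficient of $q^{-n/4}e_\eta$ over the classes $\eta\neq0$ with $\nm(\eta)\in n/4+\Z$). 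I would then split the multiplicity of each $\lambda^\perp$ according to whether the class of $r\lambda$ in $L^\vee/L$ is trivial, and translate rational quadratic divisors into CM-divisors on $X_0^D(N)/W_{D,N}$ by Proposition~5.4 of~\cite{Errthum} and Lemma~7 of~\cite{Yang-CM}, the factor $1/e_d$ recording the order of the stabilizer of a CM-point of discriminant $d$, i.e.\ the ramification of $\H\to X_0^D(N)/W_{D,N}$.

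For the terms with $r\lambda\equiv0\pmod L$ one has $c_{r\lambda}(-r^2\nm(\lambda))=c_0(-r^2\nm(\lambda))=a_{r^2\nm(\lambda)}$, and the analysis is word for word that of Lemma~\ref{lemma: divisor at infinity}, where $P(F_f)$ has only an $e_0$-component: such a $\lambda^\perp$ descends to the CM-point of discriminant $-4\nm(\lambda)$, and absorbing $r$ turns the sum over these $\lambda$ into the first displayed sum $\sum_n a_n\sum_{r}\frac1{e_{-4n/r^2}}P_{-4n/r^2}$. For the terms with $r\lambda\equiv\eta\neq0\pmod L$, the coefficient $c_{r\lambda}(-r^2\nm(\lambda))$ is nonzero only when $-r^2\nm(\lambda)=-n/4$ for a positive integer $n$, and then equals $b_n$ (the class $\eta=r\lambda\bmod L$ satisfying $\nm(\eta)=r^2\nm(\lambda)\in n/4+\Z$ as required); the only new ingredient is the discriminant of the CM-point to which $\lambda^\perp$ descends. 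Computing the quadratic form on $L^\vee/L\cong(\Z/2)\times(\Z/DN)^2$ from the Gram matrix of Lemma~\ref{lemma: basis for O}, one isolates the finitely many classes $\eta$ contributing to $P(F_f)$, and the local analysis of~\cite{Errthum,Yang-CM}, applied to these classes, shows that $\lambda^\perp$ then descends to the CM-point of discriminant $-N^2n$; morally, the factor $N^2$ in place of $4$ appears because for these classes a trace-zero generator of the embedded imaginary quadratic order $\Q[\lambda]\cap\O$ lies in $L^\vee\setminus L$ at the primes dividing $N$, so that order has conductor divisible by $N$. Running over the classes $\eta$, over $\lambda$, and over the imprimitivity parameter $r$ produces the second displayed sum $\sum_n b_n\sum_r\frac1{e_{-N^2n/r^2}}P_{-N^2n/r^2}$; adding the two contributions gives the assertion.

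The step I expect to be the main obstacle is precisely this discriminant computation: one must pin down which cosets $\eta\in L^\vee/L$ survive in $P(F_f)$ and verify that the associated special divisors are the CM-divisors $P_{-N^2n/r^2}$ with exactly that normalization, rather than, say, $-4N^2n/r^2$ or $-(DN)^2n/r^2$. This is a purely local question at the primes dividing $DN$, to be settled by the optimal-embedding computations of~\cite{Errthum} and~\cite{Yang-CM} once they are carried over to the nontrivial components of the vector-valued form. By contrast, the pushforward along $\tau\mapsto z(\tau)$ and the bookkeeping of the stabilizer orders $e_d$ are identical to the even case and involve no new ideas.
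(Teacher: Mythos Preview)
Your outline matches the paper's approach: split $P(F_f)$ into the $e_0$-part (handled by Lemma~\ref{lemma: divisor at infinity}) and the contribution from $P(f|_{1/2}S)$, then identify the discriminants of the CM-points arising from the nontrivial cosets. The paper does exactly this, and the substance of its proof is precisely the computation you flag as ``the main obstacle.''

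What the paper actually does for that step, and what your proposal leaves out: it works with the explicit basis $\ell_1,\ell_2,\ell_3$ and Gram matrix of Lemma~\ref{lemma: basis for O}. Given $\lambda\in L^\vee$ with $\nm(\lambda)=n/4$, one has $2DN\lambda\in\O$ with $\nm(2DN\lambda)=D^2N^2n$; the key input is \cite[Proposition~1.53]{Bayer}, which forces $2N\lambda\in\O$ (so the $D$-part drops out, explaining why the answer involves $N^2$ rather than $(DN)^2$). An explicit evaluation of $\nm(N\lambda)$ via the Gram matrix then shows $\nm(2N\lambda)\equiv 0,3\pmod 4$, so no $\lambda$ exists when $n\equiv 1,2\pmod 4$; when $n\equiv 3\pmod 4$ one checks $(1+N\lambda)/2\in\O$, and when $n\equiv 0\pmod 4$ one checks $N\lambda\in\O$, in each case yielding an optimal embedding of discriminant $-N^2n/r^2$. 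Your ``moral'' explanation does not by itself distinguish $N^2$ from $4N^2$ or $(DN)^2$; the parity analysis and the appeal to \cite{Bayer} are what pin this down.

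You also omit the converse direction entirely: starting from a CM-point of discriminant $-N^2n/r^2$, the paper produces an element $\lambda'\in L^\vee$ with $\nm(\lambda')=n/4$, again by an explicit coordinate computation showing $N\mid d_1$ in the basis expansion. Without this you have only shown that the divisor of $\psi_{F_f}$ is \emph{supported on} the stated CM-divisors, not that the multiplicities agree.
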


\begin{proof} Let $q$ be a prime satisfying the condition in Lemma
  \ref{lemma: basis for O} so that $B=\JS{DN,q}\Q$ is a quaternion
  algebra of discriminant $D$. Let $\O$ be the Eichler order of level
  $N$ spanned by $e_1,\ldots,e_4$ given in \eqref{equation: ei} and
  $\{\ell_1,\ell_2,\ell_3\}$ be given as in \eqref{equation:
    ell i}. The contribution from $P(f)e_0$ to the divisor of
  $\psi_{F_f}$ is described in Lemma \ref{lemma: divisor at infinity}.
  Here we are mainly concerned
  with the contribution from $P(f|S)$.

  Consider the case of odd $N$ first. Let
  $\lambda$ be an element in
  $L^\vee=\Z\ell_1/2+\Z\ell_2/DN+\Z\ell_3/DN$ satisfying
  $\nm(\lambda)=n/4$ for some positive integer $n$. We need to
  determine the discriminant of the optimal embedding
  $\phi:\Q(\sqrt{-n})\hookrightarrow B$ that maps $\sqrt{-n}$ to
  $2\lambda$.

  Observe that $2DN\lambda\in\O$ and $\nm(2DN\lambda)=-D^2N^2n$.
  By {\cite[Proposition 1.53]{Bayer}}, we must have $2N\lambda\in\O$, i.e.,
  $\lambda=c_1\ell_1/2+c_2\ell_2/N+c_3\ell_3/N$ for som integers
  $c_1$, $c_2$, and $c_3$, and the discriminant of the optimal
  embedding $\phi$ is $-4N^2n/r^2$ for some integer $r$.

  From the Gram matrix in \eqref{equation: Gram L}, we have
  $$
    \nm(N\lambda)
  =-\frac{qN^2c_1^2}4+\frac{q-1}4DNc_2^2+\frac{1-a^2DN}qDNc_3^2
   -aDN^2c_1c_3+DNc_2c_3.
  $$
  As $q$ is congruent to $1$ modulo $4$, this shows that
  $\nm(2N\lambda)\equiv 0,3\mod 4$. Therefore, if $n\equiv 1,2\mod 4$,
  then there does not exist $\lambda\in L^\vee$ such that
  $\nm(\lambda)=n/4$. Also, if $n\equiv 3\mod 4$, then $c_1$ must be
  odd and
  $$
    \frac{1+N\lambda}2=\frac{1-Nc_1}2e_1+Nc_1e_2+c_2e_3+c_3e_4\in\O.
  $$
  In this case, the discriminant of the optimal embedding is
  $-N^2n/r^2$ for some $r$. If $n\equiv 0\mod 4$, then $c_1$
  is even. It follows that $N\lambda\in\O$ and the optimal embedding
  has discriminant $-N^2n/r^2$ for some $r$.

  Conversely, given a CM-point $\tau$ of discriminant $-N^2n/r^2$,
  there exists an element $\lambda=d_1\ell_1+d_2\ell_2+d_3\ell_3\in L$
  fixing $\tau$ and having norm
  $$
    \nm(\lambda)=\begin{cases}
    -N^2n/4, &\text{if }n\equiv0\mod 4, \\
    -N^2n, &\text{if }n\equiv 3\mod 4. \end{cases}
  $$
  Note that if $n$ is odd, then we must have $(1+\lambda)/2\in\O$.
  In other words, $d_2$ and $d_3$ are even and $d_1$ is odd.
  On the other hand,
  $$
    \nm(\lambda)
  =-qd_1^2+\frac{q-1}4DNd_2^2+\frac{1-a^2DN}qDNd_3^2
   -2aDNd_1d_3+DNd_2d_3.
  $$
  Since $N$ is squarefree, this implies that $N|d_1$. Setting
  $$
    \lambda'=\begin{cases}\lambda/N, &\text{if }n\equiv 0\mod 4, \\
    \lambda/(2N), &\text{if }n\equiv 3\mod 4, \end{cases}
  $$
  we find $\lambda'\in L^\vee$ with $\nm(\lambda')=n/4$. This proves
  the lemma for the case of odd $N$. The proof of the case of even $N$
  is similar and is omitted.
\end{proof}

\begin{Lemma}\label{lemma: eta at 0}
  Let $M$ be the level of the lattice $L$ and let
  $f(\tau)=\prod_{d|M}\eta(d\tau)^{r_d}$ be an admissible eta-product.
  (See Definition \ref{definition: admissible eta}.) Then we have
  $$
    \frac{e^{2\pi i/8}}{\sqrt{|L^\vee/L|}}(f\big|_{1/2}S)(\tau)
   =\frac1{\sqrt{|L^\vee/L|}}\prod_{d|M}
    \frac1{d^{r_d/2}}\eta(\tau/d)^{r_d}\in\Q((q^{1/M})).
  $$
\end{Lemma}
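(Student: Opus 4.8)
The plan is to compute the effect of the Fricke-type involution $S=\SM0{-1}10$ on the eta-product $f(\tau)=\prod_{d|M}\eta(d\tau)^{r_d}$ directly, using the transformation law of the Dedekind eta function, and then to verify that the scalar factor that pops out matches the stated normalization. The only non-elementary ingredient is the classical identity $\eta(-1/\tau)=\sqrt{\tau/i}\,\eta(\tau)$ (equivalently $\eta(-1/\tau)=\sqrt{-i\tau}\,\eta(\tau)$).

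First I would write $\eta(d\tau)$ evaluated at $S\tau=-1/\tau$. We have $d\cdot(-1/\tau)=-1/(\tau/d)$, so $\eta(d(-1/\tau))=\eta(-1/(\tau/d))=\sqrt{(\tau/d)/i}\,\eta(\tau/d)=\sqrt{\tau/(di)}\,\eta(\tau/d)$. Raising to the $r_d$-th power and multiplying over $d\mid M$ gives
\[
  f(S\tau)=\prod_{d\mid M}\left(\frac{\tau}{di}\right)^{r_d/2}\eta(\tau/d)^{r_d}
  =\left(\frac{\tau}{i}\right)^{\frac12\sum_d r_d}\prod_{d\mid M}\frac{1}{d^{r_d/2}}\,\eta(\tau/d)^{r_d}.
\]
By condition (i) of Lemma \ref{lemma: eta}, $\sum_{d\mid M}r_d=1$, so the prefactor is exactly $(\tau/i)^{1/2}$. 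By the definition of the slash operator in weight $1/2$ for the generator $S=\left(\SM0{-1}10,\sqrt\tau\right)$ of $\wt\SL(2,\Z)$, namely $(f|_{1/2}S)(\tau)=(\sqrt\tau)^{-1}f(-1/\tau)$, the factor $\tau^{1/2}$ cancels and we are left with
\[
  (f\big|_{1/2}S)(\tau)=\frac{1}{i^{1/2}}\prod_{d\mid M}\frac1{d^{r_d/2}}\,\eta(\tau/d)^{r_d}
  =e^{-2\pi i/8}\prod_{d\mid M}\frac1{d^{r_d/2}}\,\eta(\tau/d)^{r_d}.
\]
Multiplying both sides by $e^{2\pi i/8}/\sqrt{|L^\vee/L|}$ yields the claimed formula, and the eighth root of unity $e^{2\pi i/8}=i^{1/2}$ cancels precisely the $i^{-1/2}$ produced by the eta transformation, leaving a real (indeed rational-power-series) expression.

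For the last assertion — that the result lies in $\Q((q^{1/M}))$ — I would note that $\eta(\tau/d)=e^{2\pi i\tau/(24d)}\prod_{m\ge1}(1-e^{2\pi im\tau/d})=q^{1/(24d)}\prod_{m\ge1}(1-q^{m/d})$ is a power series in $q^{1/(24d)}$ with integer coefficients; hence $\prod_d \eta(\tau/d)^{r_d}$ is a Laurent series in a suitable root of $q$ with integer coefficients, and the overall $q$-exponents are controlled by $\frac1{24}\sum_d r_d/d$, which by condition (iv) of Lemma \ref{lemma: eta} ($\sum_d (M/d)r_d\equiv0\bmod24$, i.e.\ $\frac M{24}\sum_d r_d/d\in\Z$) has denominator dividing $M$. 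The rational scalars $d^{-r_d/2}$ multiply to $\prod_d d^{-r_d/2}$, whose square is $\prod_d d^{-r_d}$; combined with condition (ii) of Lemma \ref{lemma: eta} (that $|L^\vee/L|\prod_d d^{r_d}$ is a square in $\Q^*$), the product $\frac1{\sqrt{|L^\vee/L|}}\prod_d d^{-r_d/2}$ is a genuine rational number. This gives membership in $\Q((q^{1/M}))$. I do not anticipate a real obstacle here; the one point requiring care is the bookkeeping of the eighth root of unity — making sure the $i^{1/2}$ from the eta transformation and the $e^{2\pi i/8}$ in the Weil-representation normalization cancel with the correct sign rather than combining to $\pm i$ — and checking that the chosen branch of $\sqrt\tau$ in the definition of $\wt\SL(2,\Z)$ is consistent with the branch in $\eta(-1/\tau)=\sqrt{-i\tau}\,\eta(\tau)$.
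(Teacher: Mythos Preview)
Your proposal is correct and follows exactly the paper's approach: the paper's proof is the one-line remark that the lemma ``follows immediately from the formula $\eta(-1/\tau)=e^{-2\pi i/8}\sqrt\tau\,\eta(\tau)$ and the assumptions that $\sum r_d=1$ and that $|L^\vee/L|\prod_{d|M}d^{r_d}$ is a square in $\Q^\ast$,'' and you have simply written out these steps in full. Your additional invocation of condition~(iv) to pin down that the exponents lie in $\tfrac1M\Z$ (rather than merely in $\tfrac1{24M}\Z$) is a detail the paper leaves implicit but is indeed needed for the $\Q((q^{1/M}))$ assertion.
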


\begin{proof} The lemma follows immediately from the formula
  $\eta(-1/\tau)=e^{-2\pi i/8}\sqrt\tau\eta(\tau)$ and the assumptions
  that $\sum r_d=1$ and that $|L^\vee/L|\prod_{d|M}d^{r_d}$ is a
  square in $\Q^\ast$.
\end{proof}

\begin{Lemma} \label{lemma: dimension 2}
  Let $D_0$ be the odd part of $DN$ and $g$ be the genus of the
  modular curve $X_0(4D_0)$.
  \begin{enumerate}
  \item For nonnegative integers $m$ and $n$ with
  $$
    m+n\ge 2g-2-\sum_{d|D_0}\gauss{d/4},
  $$
  we have
  $$
    \dim_\C M_{m,n}^{!,!}(4D_0)=m+n+\sum_{d|D_0}\gauss{d/4}+1-g.
  $$
  \item Let $m$ be a nonnegative integers such that
  $m\ge 2g-2-\sum_{d|D_0}\gauss{d/4}$. Then for each positive
  integer $n$, there exists a modular form $f_n$ in $M^{!,!}_{m,n}(4D_0)$
  with a pole of order $n$ at $0$. Furthermore, the space
  $M^{!,!}(4D_0)$ is spanned by $M^!(4D_0)$ and $f_1,f_2,\ldots$.
  \end{enumerate}
\end{Lemma}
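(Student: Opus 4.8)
The plan is to reduce both parts to the Riemann--Roch theorem on the modular curve $X_0(4D_0)$, proceeding exactly as in the proof of Lemma~\ref{lemma: dimension} but now keeping track of the extra pole allowed at the cusp $0$, and then to read off part~(2) from part~(1) together with Lemma~\ref{lemma: dimension} itself.

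First I would install the divisor dictionary. As in the proof of Lemma~\ref{lemma: dimension}, let $\theta$ be the Jacobi theta function, regarded as a weight-$\tfrac12$ modular form on $\Gamma_0(4D_0)$ with the theta multiplier $\chi_\theta$, so that $\div\theta=\sum_{d\mid D_0}\tfrac d4(C_{2d})$ and $\theta$ spans the one-dimensional space $M^!_0(4D_0)$. Since $2d\notin\{1,4D_0\}$ for any $d\mid D_0$, the cusps $\infty$ and $0$ do not lie in the support of $\div\theta$, so $\theta$ is holomorphic and nonvanishing at both. Put
$$
  E_{m,n}:=m(\infty)+n(0)+\sum_{d\mid D_0}\gauss{d/4}(C_{2d}),\qquad
  \deg E_{m,n}=m+n+\sum_{d\mid D_0}\gauss{d/4}.
$$
Since every $f\in M^{!,!}_{m,n}(4D_0)$ also carries the multiplier $\chi_\theta$, the maps $f\mapsto f/\theta$ and $g\mapsto g\theta$ are mutually inverse bijections between $M^{!,!}_{m,n}(4D_0)$ and the Riemann--Roch space $L(E_{m,n})$; the floors in the cusp coefficients are precisely what is needed for the fractional parts $\{d/4\}$ of $\ord_{C_{2d}}\theta$ to absorb the rounding in either direction. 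Hence $\dim_\C M^{!,!}_{m,n}(4D_0)=\dim_\C L(E_{m,n})$.

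Part~(1) then follows from Riemann--Roch just as in Lemma~\ref{lemma: dimension}: when $\deg E_{m,n}\ge 2g-1$, that is $m+n\ge 2g-1-\sum_{d\mid D_0}\gauss{d/4}$, one has $\dim_\C L(E_{m,n})=\deg E_{m,n}+1-g$ immediately, and the single borderline value $m+n=2g-2-\sum_{d\mid D_0}\gauss{d/4}$ (so $\deg E_{m,n}=2g-2$) is handled by the same non-speciality argument that the proof of Lemma~\ref{lemma: dimension} uses for the divisor $(m+n)(\infty)+\sum_{d\mid D_0}\gauss{d/4}(C_{2d})$. For part~(2), fix $m\ge 2g-2-\sum_{d\mid D_0}\gauss{d/4}$ and let $n\ge1$. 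Then $\deg E_{m,n}\ge 2g-1$, so part~(1) gives $\dim_\C L(E_{m,n})=m+n+\sum_{d\mid D_0}\gauss{d/4}+1-g$, while $\dim_\C L(E_{m,n-1})$ equals the same expression with $n$ replaced by $n-1$: for $n\ge2$ this is again part~(1) (as $\deg E_{m,n-1}\ge 2g-1$), and for $n=1$ it is $\dim_\C M^!_m(4D_0)$, since $M^{!,!}_{m,0}(4D_0)=M^!_m(4D_0)$, which equals $m+\sum_{d\mid D_0}\gauss{d/4}+1-g$ by Lemma~\ref{lemma: dimension}. Thus $L(E_{m,n-1})\subsetneq L(E_{m,n})$ with codimension $1$, so I can pick $g_n\in L(E_{m,n})\setminus L(E_{m,n-1})$, i.e.\ a function with a pole of order exactly $n$ at $0$, and set $f_n:=g_n\theta\in M^{!,!}_{m,n}(4D_0)$; as $\theta$ is holomorphic and nonzero at $0$, this $f_n$ has a pole of order $n$ at $0$. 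Finally, given any $f\in M^{!,!}(4D_0)$ with a pole of order $n'$ at $0$, successively subtracting suitable scalar multiples of $f_{n'},f_{n'-1},\dots,f_1$ cancels the principal part of $f$ at $0$, leaving a form holomorphic at $0$ and at every cusp other than $\infty$, hence an element of $M^!(4D_0)$; therefore $M^{!,!}(4D_0)=M^!(4D_0)+\C f_1+\C f_2+\cdots$.

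I expect the only non-routine step to be the borderline case $m+n=2g-2-\sum_{d\mid D_0}\gauss{d/4}$ of part~(1): proving that $E_{m,n}$ is non-special cannot be done by Riemann--Roch alone and must be inherited from the proof of Lemma~\ref{lemma: dimension} (and thus rests on the arithmetic of the particular curves $X_0(4D_0)$ at hand). Everything else --- the $\theta$-twist dictionary and the codimension count behind part~(2) --- is formal once that lemma is available.
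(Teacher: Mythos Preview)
Your proposal is correct and follows essentially the same approach as the paper: Part~(1) is proved by the $\theta$-twist to a Riemann--Roch space on $X_0(4D_0)$ exactly as in Lemma~\ref{lemma: dimension} (the paper simply says ``similar to that of Lemma~\ref{lemma: dimension} and is omitted''), and Part~(2) is the same codimension-one chain $M^{!,!}_{m,n-1}\subsetneq M^{!,!}_{m,n}$ followed by successive subtraction to land in $M^!(4D_0)$. Your explicit check that $\infty$ and $0$ lie outside the support of $\div\theta$ is a useful detail the paper leaves implicit; your flag on the borderline case $\deg E_{m,n}=2g-2$ is accurate, though note the paper's proof of Lemma~\ref{lemma: dimension} does not spell out a non-speciality argument either.
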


\begin{proof}
The proof of Part (1) is similar to that of Lemma \ref{lemma:
  dimension} and is omitted. To prove Part (2), we notice that Part
(1) implies that when $m\ge 2g-2-\sum_{d|D_0}\gauss{d/4}$, the space
$M^{!,!}_{m,0}(4D_0)$ has co-dimension $n$ in $M^{!,!}_{m,n}(4D_0)$.
It follows that for each integer $k$ with $1\le k\le n$, there exists
a modular form $f_k$ in $M^{!,!}_{m,n}(4D_0)$ with a pole of order $k$ at
$0$. Now if $f$ is a modular form in $M^{!,!}(4D_0)$, then for some
linear combination $\sum c_nf_n$, we have $f-\sum c_nf_n\in M^!(4D_0)$. This proves Part (2).
\end{proof}

\begin{Proposition}
  For $(D,N)$ in Theorem \ref{theorem: main} with odd $D$ and
  squarefree $N$, the space $M^{!,!}(4D_0)$ is spanned by admissible
  eta-products. Moreover, if $f(\tau)\in M^{!,!}(4D_0)\cap\Q((q))$, then
  $$
    \frac{e^{2\pi i/8}}{\sqrt{|L^\vee/L|}}
    (f|_{1/2}S)(\tau)\in\Q((q^{1/(4D_0)})).
  $$
\end{Proposition}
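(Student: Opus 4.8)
The plan is to proceed in complete parallel with the proof of Proposition~\ref{proposition: admissible span}, using Lemma~\ref{lemma: dimension 2} in place of Lemma~\ref{lemma: dimension} to reduce the spanning statement to a finite computation with the integer programming solver, and then to deduce the ``moreover'' assertion from the spanning statement together with Lemma~\ref{lemma: eta at 0} and elementary linear algebra over $\Q$.

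For the spanning statement, let $g$ be the genus of $X_0(4D_0)$ and put $n_0=\max(2g-2-\sum_{d|D_0}\gauss{d/4},0)$. As in the proof of Proposition~\ref{proposition: admissible span}, I would produce, by solving suitable modified integer programming problems with AMPL and the Gurobi solver, two weight-$0$ eta-products $t$ and $s$ on $\Gamma_0(4D_0)$ whose only poles are at the cusps $\infty$ and $0$ respectively, with $t$ holomorphic and nonvanishing at $0$ and $s$ holomorphic and nonvanishing at $\infty$ (one may take $s(\tau)=t(-1/(4D_0\tau))$ up to a constant, which is again an eta-product). Writing $k$ and $k'$ for the orders of the poles of $t$ at $\infty$ and of $s$ at $0$, the dimension formula of Lemma~\ref{lemma: dimension 2}(1) yields, for all sufficiently large $m$ and $n$,
\[
  M^{!,!}_{m+k,n}(4D_0)=M^{!,!}_{m,n}(4D_0)+t\,M^{!,!}_{m,n}(4D_0),\qquad
  M^{!,!}_{m,n+k'}(4D_0)=M^{!,!}_{m,n}(4D_0)+s\,M^{!,!}_{m,n}(4D_0).
\]
Since $t$ and $s$ commute, and since the product of $t$ or of $s$ with an admissible eta-product is again an admissible eta-product, iterating these relations reduces the spanning statement to the single assertion that one fixed finite-dimensional space $M^{!,!}_{m_0,n_0}(4D_0)$ --- with $m_0$ and $n_0$ chosen in the stable range of Lemma~\ref{lemma: dimension 2} --- is spanned by admissible eta-products. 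That in turn is a finite collection of finite integer programming problems, which I would solve case by case with the solver for each $(D,N)$ in Theorem~\ref{theorem: main} with odd $D$, recording only one representative example and omitting the rest.

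For the ``moreover'' part, use the spanning statement to choose linearly independent admissible eta-products $g_1,\dots,g_r$, say $g_i=\prod_{d|4D_0}\eta(d\tau)^{r^{(i)}_d}$, spanning a subspace of $M^{!,!}(4D_0)$ that contains the given $f$. Each $g_i$ has Fourier expansion in $\Q((q))$ (indeed in $\Z((q))$, since admissibility forces $\sum_d d\,r^{(i)}_d\equiv 0\mod 24$), so because $f\in M^{!,!}(4D_0)\cap\Q((q))$ lies in their span, equating finitely many Fourier coefficients gives an invertible rational linear system whose solution exhibits $f=\sum_i c_ig_i$ with all $c_i\in\Q$. By Corollary~\ref{corollary: level} the level of $L$ equals $M=4D_0$, so Lemma~\ref{lemma: eta at 0} gives
\[
  \frac{e^{2\pi i/8}}{\sqrt{|L^\vee/L|}}\bigl(g_i|_{1/2}S\bigr)(\tau)
  =\frac1{\sqrt{|L^\vee/L|}}\prod_{d|4D_0}d^{-r^{(i)}_d/2}\,\eta(\tau/d)^{r^{(i)}_d}
  \in\Q((q^{1/(4D_0)}))
\]
for every $i$; here the admissibility requirement that $|L^\vee/L|\prod_d d^{r^{(i)}_d}$ be a square in $\Q^\ast$ is precisely what forces the scalar on the right-hand side to be rational. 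By linearity of the slash action, $f|_{1/2}S=\sum_i c_i\,(g_i|_{1/2}S)$, and multiplying through by $e^{2\pi i/8}|L^\vee/L|^{-1/2}$ shows that $e^{2\pi i/8}|L^\vee/L|^{-1/2}(f|_{1/2}S)(\tau)\in\Q((q^{1/(4D_0)}))$, as desired.

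The step I expect to be the main obstacle is the last claim of the second paragraph: there is no conceptual reason why the finite-dimensional spaces $M^{!,!}_{m_0,n_0}(4D_0)$ should be spanned by admissible eta-products, i.e.\ why the corresponding integer programming problems should admit enough solutions, so this has to be verified computationally for each individual pair $(D,N)$. By contrast, the reduction via $t$ and $s$ and the entire ``moreover'' argument are routine.
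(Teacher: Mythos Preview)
Your proposal is correct and follows essentially the same strategy as the paper: reduce the spanning of $M^{!,!}(4D_0)$ to a finite integer-programming verification via multiplication by eta-product Hauptmoduln with poles only at $\infty$ and at $0$, and deduce the rationality statement from Lemma~\ref{lemma: eta at 0} together with the fact that admissible eta-products have rational $q$-expansions. The only organizational difference is that the paper, instead of your symmetric two-variable recursion down to a single space $M^{!,!}_{m_0,n_0}$, invokes Lemma~\ref{lemma: dimension 2}(2) to split the problem into showing (ii) that $M^!(4D_0)$ is spanned by admissible eta-products (already done in Proposition~\ref{proposition: admissible span}) and (iii) that the single space $M^{!,!}_{m,k}(4D_0)$ is so spanned; this is a cosmetic difference, and your more careful justification that the coefficients $c_i$ lie in $\Q$ makes explicit a step the paper takes for granted.
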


\begin{proof} Suppose that we can find an eta-product
  $t(\tau)$ such that $t(\tau)$ is a modular function on $X_0(4D_0)$
  with a unique pole at $\infty$. Let $k$ be the order of pole of
  $t(\tau)$ at $\infty$. Then $t(-1/(4D_0\tau))$ is a modular function
  on $X_0(4D_0)$ with a unique pole of order $k$ at $0$. Let $g$ be
  the genus of $X_0(4D_0)$ and $m$ be an integer with $m\ge
  2g-2-\sum_{d|D_0}\gauss{d/4}$. By Lemma \ref{lemma: dimension 2},
  for each positive integer $j$, there exist a modular form in
  $M^{!,!}_{m,j}(4D_0)$ with a pole of order $j$ at $0$. It follows that
  $$
    M_{m,n+k}^{!,!}(4D_0)=M_{m,n}^{!,!}(4D_0)+t(-1/4D_0\tau)
    M_{m,n}^{!,!}(4D_0)
  $$
  Thus, to prove the proposition, it suffices to show that
  \begin{enumerate}
  \item[(i)] there exists an eta-product $t(\tau)$ such that $t(\tau)$ is a modular
    function on $\Gamma_0(4D_0)$ with a unique pole of at $\infty$,
  \item[(ii)] admissible eta-products span $M^!(4D_0)$, and
  \item[(iii)] admissible eta-products span $M^{!,!}_{m,k}(4D_0)$, where $k$
    is the order of pole of $t(\tau)$ at $\infty$.
  \end{enumerate}
  For (i) and (ii), the integer programming problem involved in the
  construction of $t(\tau)$ and admissible eta-products is the same as
  that in Proposition \ref{proposition: admissible span}. For (iii),
  the integer programming problem is slightly different. For the case
  $D_0=p$ is a prime, instead of \eqref{equation: integer programming
  2}, we have
  \begin{equation*}
  \begin{array}{rcrcrcrcrcrl}
  r_1&+&2r_2&+&4r_4&+&pr_p&+&2pr_{2p}&+&4pr_{4p}&\ge -24m \\
  r_1&+&2r_2&+&r_4&+&pr_p&+&2pr_{2p}&+&pr_{4p}&\ge 0 \\
  4r_1&+&2r_2&+&r_4&+&4pr_p&+&2pr_{2p}&+&pr_{4p}&\ge 0\\
  pr_1&+&2pr_2&+&4pr_4&+&r_p&+&2r_{2p}&+&4r_{4p}&\ge 0\\
  pr_1&+&2pr_2&+&pr_4&+&r_p&+&2r_{2p}&+&r_{4p}&\ge 0\\
  4pr_1&+&2pr_2&+&pr_4&+&4r_p&+&2r_{2p}&+&r_{4p}&\ge -24k\\
  \end{array}
  \end{equation*}
  where the last inequality corresponds to the condition that the
  order of pole at $0$ is at most $k$. After setting up the integer
  programming problems, we check case by case that admissible
  eta-products do expand $M^{!,!}(4D_0)$.

  Since every modular form $f(\tau)$ in $M^{!,!}(4D_0)\cap\Q((q))$ is
  a $\Q$-linear combination of admissible eta-products, the assertion
  about rationality of Fourier coefficients of $f|_{1/2}S$ follows
  from Lemma \ref{lemma: eta at 0}.
\end{proof}

\begin{Example} \label{example: 15}
Consider the Shimura curve $X^{15}_0(1)/W_{15,1}$. We
  have $|L^\vee/L|=450$ and the level of the lattice $L$ is
  $60$. By solving the relevant integer programming problem, we find that
  \begin{equation*}
    t(\tau)=\frac{\eta(2\tau)\eta(12\tau)^6\eta(20\tau)^2\eta(30\tau)^3}
    {\eta(4\tau)^2\eta(6\tau)^3\eta(10\tau)\eta(60\tau)^6}
   =q^{-8}-q^{-6}+q^{-4}+q^{-2}+q^{4}+\cdots.
  \end{equation*}
  is a modular function on $\Gamma_0(60)$ with a unique pole of order
  $8$ at $\infty$. Also, the genus of $X_0(60)$ is $7$. By Lemma
  \ref{lemma: dimension}, the number of gaps of $M^!(60)$ is $3$, and
  for $n\ge 8$, we have $\dim M_n^!(60)=n-2$. According to the proof
  of Proposition \ref{proposition: admissible span}, we should find an
  integer $n_0$ such that $M^!_{n_0+8}(60)$ is spanned by eta-products
  and for each integer $j$ with $n_0<j\le n_0+8$, there exists a
  modular form in $M^!_{n_0+8}(60)$ with a pole of order $j$ at
  $\infty$. It turns out that we can choose $n_0=3$. (In other words,
  we will see that the gaps are $1,2,3$.)

  For convenience, we let
  $(r_1,r_2,r_3,r_4,r_5,r_6,r_{10},r_{12},r_{15},r_{20},r_{30},r_{60})$
  represents the eta-product $\prod_{d|60}\eta(d\tau)^{r_d}$. By
  solving the integer programming program, we find that there are at
  least $96$ eta-products in $M_{11}^!(60)$. Among them, we choose
  \begin{equation*} \small
  \begin{split}
  f_{11}=(0, 1, 0, -1, -1, -1, 0, 2, 5, 2, 1, -7), 
  &\qquad f_{10}=(0, 0, -1, 0, 2, 1, 0, 1, 1, 1, 2, -6), \\
  f_9=(0, 0, -1, 0, -1, 2, -1, 0, 2, 3, 4, -7),
  &\qquad f_8=(0, 0, -1, 1, -2, 0, 1, 1, 5, 1, 0, -5), \\
  f_7=(0, 1, 1, -1, 2, -1, -2, 1, -1, 3, 3, -5),
  &\qquad f_6=(0, 1, 0, -1, 0, -1, 0, 2, 2, 1, 1, -4), \\
  f_5=(0, 0, -1, 0, -1, 1, 2, 1, 2, 0, 0, -3),
  &\qquad f_4=(0, 0, -1, 0, 0, 2, -1, 0, -1, 2, 4, -4), \\
  f_0=(-2, 5, 0, -2, 0, 0, 0, 0, 0, 0, 0, 0). &
  \end{split}
  \end{equation*}
  They form a basis for $M^!_{11}(60)$. (The subscripts are the orders of
  poles at $\infty$.) Then multiplying those modular forms by suitable
  powers of $t(\tau)$, we get, for each a nongap integer $j>0$, a
  modular form in $M^!(60)\cap\Z((q))$ with a unique pole of order $j$
  at $\infty$ and a leading coefficient $1$.

  Furthermore, we find that there are at least $102$ eta-products in
  $M^{!,!}_{3,8}(60)$. Among them, we choose
  \begin{equation*} \small
  \begin{split}
    g_1(\tau)&=
    \frac{\eta(2\tau)\eta(3\tau)\eta(4\tau)\eta(5\tau)\eta(12\tau)\eta(30\tau)}
    {\eta(\tau)^2\eta(6\tau)\eta(60\tau)^2}
     =q^{-3}+2q^{-2}+4q^{-1}+\cdots, \\
    g_2(\tau)&=
    \frac{\eta(2\tau)^4\eta(3\tau)^2\eta(10\tau)^3\eta(12\tau)}
    {\eta(\tau)^3\eta(4\tau)\eta(5\tau)\eta(6\tau)^2\eta(20\tau)\eta(60\tau)}
     =q^{-2}+3q^{-1}+5+8q+\cdots, \\
    g_3(\tau)&=
    \frac{\eta(4\tau)^2\eta(6\tau)\eta(10\tau)^2}
    {\eta(\tau)^2\eta(20\tau)\eta(60\tau)}
   =q^{-2}+2q^{-1}+5+10q+18q^2+\cdots, \\
    g_4(\tau)&=
    \frac{\eta(2\tau)^3\eta(3\tau)^4\eta(5\tau)\eta(12\tau)^2\eta(30\tau)}
    {\eta(\tau)^4\eta(4\tau)\eta(6\tau)^3\eta(15\tau)\eta(60\tau)}
   =q^{-1}+4+11q+24q^2+\cdots, \\
    g_5(\tau)&=
    \frac{\eta(2\tau)^5\eta(3\tau)\eta(6\tau)\eta(10\tau)}
    {\eta(\tau)^5\eta(12\tau)\eta(60\tau)}
   =q^{-2}+5q^{-1}+15+39q+90q^2+\cdots, \\
    g_6(\tau)&=
    \frac{\eta(2\tau)^3\eta(3\tau)^2\eta(5\tau)\eta(6\tau)^2}
    {\eta(\tau)^5\eta(12\tau)\eta(60\tau)}
   =q^{-2}+5q^{-1}+17+48q+\cdots, \\
    g_7(\tau)&=
    \frac{\eta(2\tau)^2\eta(3\tau)\eta(4\tau)\eta(5\tau)^3\eta(6\tau)}
    {\eta(\tau)^5\eta(12\tau)\eta(15\tau)}
   =1+5q+18q^2+54q^3+\cdots, \\
    g_8(\tau)&=
    \frac{\eta(2\tau)^4\eta(3\tau)^2\eta(5\tau)^3\eta(12\tau)^2\eta(15\tau)}
    {\eta(\tau)^6\eta(4\tau)\eta(6\tau)^2\eta(10\tau)\eta(60\tau)}
   =q^{-1}+6+23q+72q^2+\cdots
  \end{split}
  \end{equation*}
  of weight $1/2$ on $\Gamma_0(60)$. By
  Lemma \ref{lemma: eta at 0},
  \begin{equation*} \small
  \begin{split}
  \frac{60e^{2\pi i/8}}{15\sqrt2}(g_1\big|S)(\tau)
  &=\frac2{3}(q^{-2/60}+2q^{-1/60}+4+8q^{1/60}+14q^{2/60}+\cdots), \\
  \frac{60e^{2\pi i/8}}{15\sqrt2}(g_2\big|S)(\tau)
  &=q^{-2/60}+q^{-1/60}+2+4q^{1/60}+6q^{2/60}+8q^{3/60}+\cdots, \\
  \frac{60e^{2\pi i/8}}{15\sqrt2}(g_3\big|S)(\tau)
  &=2(q^{-3/60}+q^{-2/60}+2q^{-1/60}+4+6q^{1/60}+\cdots), \\
  \frac{60e^{2\pi i/8}}{15\sqrt2}(g_4\big|S)(\tau)
  &=2(q^{-4/60}+q^{-3/60}+q^{-2/60}+2q^{-1/60}+3+\cdots), \\
  \frac{60e^{2\pi i/8}}{15\sqrt2}(g_5\big|S)(\tau)
  &=q^{-5/60}+q^{-4/60}+2q^{-3/60}+3q^{-2/60}+5q^{-1/60}+\cdots, \\
  \frac{60e^{2\pi i/8}}{15\sqrt2}(g_6\big|S)(\tau)
  &=\frac23(q^{-6/60}+q^{-5/60}+2q^{-4/60}+3q^{-3/60}+\cdots), \\
  \end{split}
  \end{equation*}

 \begin{equation*} \small
  \begin{split}
  \frac{60e^{2\pi i/8}}{15\sqrt2}(g_7\big|S)(\tau)
  &=\frac1{5}(q^{-7/60}+q^{-3/60}+q^{-2/60}+2q^{1/60}+\cdots), \\
  \frac{60e^{2\pi i/8}}{15\sqrt2}(g_8\big|S)(\tau)
  &=\frac2{15}(q^{-8/60}+q^{-7/60}+2q^{-6/60}+3q^{-5/60}+\cdots).
  \end{split}
  \end{equation*}

  Thus, letting
  $$
    h_1=3g_1/2-g_2, \quad h_2=g_2, \quad h_3=g_3/2, \quad h_4=g_4/2,
  $$
  and
  $$
    h_5=g_5, \quad h_6=3g_6/2, \quad h_7=5g_7, \quad h_8=15g_8/2,
  $$
  we get a sequence $h_j$, $j=1,\ldots,8$, of modular forms such that
  $$
    \frac{60e^{2\pi i/8}}{15\sqrt2}(h_j\big|S)(\tau)
   =q^{-j/60}+\cdots.
  $$
  Now we have
  $$
    t(-1/60\tau)=5\frac{\eta(2\tau)^3\eta(3\tau)^2\eta(5\tau)^6\eta(30\tau)}
    {\eta(\tau)^6\eta(6\tau)\eta(10\tau)^3\eta(15\tau)^2}
   =5+30q+120q^2+390q^3+\cdots,
  $$
  which is a modular function on $\Gamma_0(60)$ having a unique pole
  of order $8$ at the cusp $0$.
  Thus, by multiplying $h_j$ with suitable powers of $t(-1/60\tau)$, we
  get, for each positive integer $m$, an $\{\infty,0\}$-weakly
  holomorphic modular form $h_m$ whose order of pole at $\infty$ is
  bounded by $3$, while
  $$
    \frac{60e^{2\pi i/8}}{15\sqrt2}(h_m\big|S)(\tau)=q^{-m/60}+\cdots.
  $$
\end{Example}

\begin{Remark}
  We expect that, as in the case of even $D$, for $(D,N)$ in Theorem \ref{theorem: main} with
  odd $D$ and squarefree $N$, all meromorphic modular forms of even
  weights on $X_0^D(N)/W_{D,N}$ with a divisor supported on
  CM-divisors can be realized as a Borcherds form. However, a proof
  along the line of that of Proposition \ref{proposition: all
    Borcherds} will be a little complicated because the Fourier
  expansions at $0$ of a modular form in $M^{!,!}(4D_0)\cap\Z((q))$
  may not be integral.
\end{Remark}

\begin{Example} \label{example: 15 2}
  Here we give an example showing how to construct a
  Borcherds form with a desired divisor on $X_0^{15}(1)/W_{15,1}$
  using modular forms in $M^{!,!}(60)$.

  Suppose that we wish to construct a Borcherds form with a divisor
  $P_{-12}-P_{-3}$. For a positive integer $j$, we let $h_j$ be the
  modular form in $M^{!,!}(60)$ constructed in Example \ref{example:
    15} with the properties that its order of pole at
  $\infty$ is bounded by $3$ and
  $$
    \frac{60e^{2\pi i/8}}{15\sqrt2}(h_j\big|S)(\tau)=q^{-j/60}+\cdots.
  $$
  A suitable linear combination of these $h_m$ will yield a function
  $h$ with
  \begin{equation} \label{equation: 15 h}
    h(\tau)=q^{-2}+11+\cdots, \quad
    \frac{60e^{2\pi i/8}}{15\sqrt2}(h\big|S)(\tau)
   =2q^{-3/4}+4q^{1/60}+4q^{2/60}+\cdots.
  \end{equation}
  By Lemma \ref{lemma: divisor infinity 0},
  $$
    \div\psi_{F_h}=\frac13P_{-3}.
  $$
  Let
  \begin{equation*}
  \begin{split}
    f=f_8-f_5+f_4=q^{-8}+2q^{-3}+q^{-2}+2q^2+\cdots
  \end{split}
  \end{equation*}
  where $f_j$ are as given in Example \ref{example: 15}. By Lemma
  \ref{lemma: divisor infinity 0},
  $$
    \div\psi_{F_f}=P_{-12}+\frac13P_{-3}.
  $$
  Therefore, we find that $\psi_{F_{f-4h}}$  is a Borcherds form with
  a divisor $P_{-12}-P_{-3}$.
\end{Example}

\end{subsection}
\end{section}

\begin{section}{Equations of hyperelliptic Shimura curves}
\label{section: equations}
Recall that a compact Riemann surface $X$ of genus $\ge 2$ is
hyperelliptic if and only if there exists a double covering $\pi:
X\rightarrow \mathbb{P}(\mathbb{C}),$ or equivalently, if there exists
an involution $w:X\rightarrow X$ such that $X/w$ has genus zero.
The involution $w$ is unique and is called the hyperelliptic
involution.

\begin{theorem}[{\cite[Theorems 7 and 8]{Ogg}}]
Let $g(D,N)$ denote the genus of $X^D_0(N)$. The following table gives the
full list of hyperelliptic Shimura curves, $D>1$, and their hyperelliptic
involutions.

\renewcommand{\arraystretch}{1}
\tabcolsep=10pt                         
\begin{table}[htb]
\caption{List of hyperelliptic Shimura curves and their hyperelliptic involutions} \label{table: curves}
\newcommand{\tabincell}[2]{\begin{tabular}{@{}#1@{}}#2\end{tabular}}
  \centering
\begin{minipage}[b]{0.45\linewidth}\centering
\begin{tabular}{|c|c|c|c|}
\hline
$D$&$N$&$g(D,N)$&$w$\\
\hline
$26$&$1$&$2$&$w_{26}$\\
\hline
$35$&$1$&$3$&$w_{35}$\\
\hline
$38$&$1$&$2$&$w_{38}$\\
\hline
$39$&$1$&$3$&$w_{39}$\\
\hline
$51$&$1$&$3$&$w_{51}$\\
\hline
$55$&$1$&$3$&$w_{55}$\\
\hline
$57$&$1$&$3$&$w_{19}$\\
\hline
$58$&$1$&$2$&$w_{29}$\\
\hline
$62$&$1$&$3$&$w_{62}$\\
\hline
$69$&$1$&$3$&$w_{69}$\\
\hline
$74$&$1$&$4$&$w_{74}$\\
\hline
$82$&$1$&$3$&$w_{41}$\\
\hline
$86$&$1$&$4$&$w_{86}$\\
\hline
$87$&$1$&$5$&$w_{87}$\\
\hline
$93$&$1$&$5$&$w_{31}$\\
\hline

$94$&$1$&$3$&$w_{94}$\\
\hline

$95$&$1$&$7$&$w_{95}$\\
\hline

$111$&$1$&$7$&$w_{111}$\\
\hline

$119$&$1$&$9$&$w_{119}$\\
\hline
$134$&$1$&$6$&$w_{134}$\\
\hline
$146$&$1$&$7$&$w_{146}$\\
\hline
$159$&$1$&$9$&$w_{159}$\\
\hline

$194$&$1$&$9$&$w_{194}$\\
\hline
$206$&$1$&$9$&$w_{206}$\\
\hline
\end{tabular}
\end{minipage}
\hspace{0.5cm}
\begin{minipage}[b]{0.45\linewidth}
\centering
\begin{tabular}{|c|c|c|c|}
\hline
$D$&$N$&$g(D,N)$&$w$\\
\hline
$6$&$11$&$3$&$w_{66}$\\
\hline
$6$&$17$&$3$&$w_{34}$\\
\hline
$6$&$19$&$3$&$w_{114}$\\
\hline
$6$&$29$&$5$&$w_{174}$\\
\hline
$6$&$31$&$5$&$w_{186}$\\
\hline
$6$&$37$&$5$&$w_{222}$\\
\hline
$10$&$11$&$5$&$w_{110}$\\
\hline
$10$&$13$&$3$&$w_{65}$\\
\hline
$10$&$19$&$5$&$w_{38}$\\
\hline
$10$&$23$&$9$&$w_{230}$\\
\hline
$14$&$3$&$3$&$w_{14}$\\
\hline
$14$&$5$&$3$&$w_{14}$\\
\hline
$15$&$2$&$3$&$w_{15}$\\
\hline
$15$&$4$&$5$&$w_{15}$\\
\hline
$21$&$2$&$3$&$w_{7}$\\
\hline
$22$&$3$&$3$&$w_{66}$\\
\hline
$22$&$5$&$5$&$w_{110}$\\
\hline
$26$&$3$&$5$&$w_{26}$\\
\hline
$39$&$2$&$7$&$w_{39}$\\
\hline
\end{tabular}
\end{minipage}
\end{table}
\end{theorem}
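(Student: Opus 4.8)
The plan is to reduce the classification to an explicit, ultimately finite computation with genus formulas, the essential point being that a hyperelliptic Shimura curve $X_0^D(N)$ must have an Atkin--Lehner involution as its hyperelliptic involution. Recall that a curve $X$ of genus $g\ge 2$ is hyperelliptic exactly when it admits an involution $w$ with $g(X/w)=0$, that such $w$ is unique, and hence that $w$ is fixed by every automorphism of $X$ and lies in the center of $\operatorname{Aut}(X)$; moreover, since $X_0^D(N)$ and its canonical model are defined over $\Q$, this $w$ is defined over $\Q$. For $D>1$ one knows that the $\Q$-rational automorphisms of $X_0^D(N)$ are exactly the elements of $W_{D,N}$ (Ogg, Rotger; the arguments use the Cerednik--Drinfeld $p$-adic uniformization together with the moduli interpretation of $X_0^D(N)$). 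Therefore $w=w_m$ for some Hall divisor $m>1$ of $DN$ (i.e.\ $m\mid DN$ with $(m,DN/m)=1$), and the problem becomes: determine all coprime pairs $(D,N)$ with $D>1$ for which $g(X_0^D(N))\ge 2$ and $g\bigl(X_0^D(N)/w_m\bigr)=0$ for some Hall divisor $m>1$, and record the corresponding $m$.

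For a fixed pair $(D,N)$ and a fixed $m$, both conditions are completely explicit. Since $D>1$ the curve $X_0^D(N)$ has no cusps, so the degree-two quotient map $X_0^D(N)\to X_0^D(N)/w_m$ is ramified precisely at the fixed points of $w_m$, each with ramification index $2$; Riemann--Hurwitz then reads
$$
  2\,g(X_0^D(N))-2=2\bigl(2\,g(X_0^D(N)/w_m)-2\bigr)+\#\operatorname{Fix}(w_m),
$$
so the quotient has genus $0$ if and only if $w_m$ has exactly $2\,g(X_0^D(N))+2$ fixed points (the expected number of Weierstrass points). Here $g(X_0^D(N))$ is given by the Eichler--Shimura genus formula, whose index term is of size $\asymp\prod_{p\mid D}(p-1)\cdot\prod_{p^e\|N}p^{e-1}(p+1)$ and whose corrections come only from elliptic points of orders $2$ and $3$; the fixed points of $w_m$ are CM-points, and by Eichler's theorem on optimal embeddings $\#\operatorname{Fix}(w_m)$ equals a sum of class numbers $h(\mathcal O)$ of the relevant orders $\mathcal O$ in $\Q(\sqrt{-m})$ (with the usual variants according to $m\bmod 4$), each weighted by a product over $p\mid DN$ of local embedding numbers, each such local factor being $0$ or $2$. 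Thus both sides of the Weierstrass-point equation are explicit arithmetic functions of $(D,N,m)$.

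It remains to cut the search down to finitely many pairs and then to run it. The genus formula gives $g(X_0^D(N))\gg DN/\log\log DN$ — the index term dominates and the elliptic-point corrections are $O\bigl((DN)^{\varepsilon}\bigr)$ — while $\#\operatorname{Fix}(w_m)\ll_{\varepsilon}(DN)^{1/2+\varepsilon}$, because $m\le DN$, the class numbers satisfy $h(\mathcal O)\ll\sqrt m\,\log m$, only $O(1)$ orders occur, and the product of local factors is at most $2^{\omega(DN)}\ll_{\varepsilon}(DN)^{\varepsilon}$. Hence $\#\operatorname{Fix}(w_m)=2\,g(X_0^D(N))+2$ can hold only below an effective bound on $DN$; one then lists the finitely many remaining pairs and, for each, computes $g(X_0^D(N))$ and $g(X_0^D(N)/w_m)$ for every Hall divisor $m$, reading off exactly the entries of the table — with the hyperelliptic involution $w$ being the unique $w_m$ that occurs, uniqueness following from the Castelnuovo--Severi inequality, which forbids two distinct degree-two maps to $\P^1$ on a curve of genus $\ge 2$. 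The main obstacle, and the deep input, is the first step: one genuinely needs that every $\Q$-rational automorphism of $X_0^D(N)$ is Atkin--Lehner, for otherwise the hyperelliptic involution could not be pinned down among the $w_m$; once that is granted, the genus bookkeeping and the finite search, although lengthy, are entirely routine.
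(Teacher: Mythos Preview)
The paper does not give its own proof of this statement; it is quoted directly from Ogg's 1983 paper and used as input for the subsequent computation of equations. So there is no in-paper argument to compare your sketch against.

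As a standalone proof your outline is largely sound: reducing to a finite search via Riemann--Hurwitz, Eichler's count of optimal embeddings for $\#\operatorname{Fix}(w_m)$, and the growth comparison between $g(X_0^D(N))$ (of size roughly $DN$) and $\#\operatorname{Fix}(w_m)$ (of size at most $(DN)^{1/2+\varepsilon}$) is exactly how such a classification becomes finite. The delicate point is pinning the hyperelliptic involution to an Atkin--Lehner element. Your route---``all $\Q$-rational automorphisms lie in $W_{D,N}$''---is stronger than needed and not quite correct as stated: the full automorphism-group results postdate Ogg's paper (Rotger; Kontogeorgis--Rotger), and the present paper itself notes (in the remark on $X_0^{15}(4)$) that for $N$ not squarefree the normalizer of the Eichler order, and hence the group of modular automorphisms, can strictly contain $W_{D,N}$. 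Ogg argues more directly: knowing only that $w$ is $\Q$-rational and central, he analyses its effect on the bad fibres at primes $p\mid D$ (via the Cerednik--Drinfeld description of the special fibre) and forces $w$ itself into $W_{D,N}$ without first classifying all automorphisms. If you substitute that targeted step for your global automorphism claim, the remainder of your bookkeeping matches how the classification is actually carried out.
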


\begin{subsection}{Method}
Let us briefly explain our method to compute equations of these hyperelliptic
Shimura curves. Before doing that, we remark that in addition to
Borcherds forms and Schofer's formula, arithmetic properties of
CM-points are also crucial in our computation. We refer the reader to
\cite[Section 5]{Victor-genus-one} for an explicit description of the
Shimura reciprocity law.

Let $X_0^D(N)$ be one of the curves in Ogg's list. Since the
hyperelliptic involution of $X_0^D(N)$ is an Atkin-Lehner involution,
the genus of $X_0^D(N)/W_{D,N}$ is necessarily $0$. Moreover, it turns
out that any of these $X_0^D(N)/W_{D,N}$ has at least three rational
CM-points $\tau_1$, $\tau_2$, and $\tau_3$ of discriminants $d_1$,
$d_2$, and $d_3$, respectively. Thus, there is a Hauptmodul $s(\tau)$
on $X_0^D(N)/W_{D,N}$ with $s(\tau_1)=\infty$, $s(\tau_2)=0$ and
$s(\tau_3)\in\Q$.

Let $W$ be a subgroup of index $2$ of $W_{D,N}$. Suppose that $w_m$ is
an element of $W_{D,N}$ not in $W$. Then $X_0^D(N)/W\to
X_0^D(N)/W_{D,N}$ is a double cover ramified at certain CM-points that
are fixed points of the Atkin-Lehner involutions $w_{mn/\gcd(m,n)^2}$,
$w_n\in W$. Thus, an equation of $X_0^D(N)/W$ is
\begin{equation} \label{equation: strategy 1}
  y^2=a\prod_{\tau\text{ ramified},s(\tau)\neq\infty}(s-s(\tau)),
\end{equation}
where $a$ is a rational number depending on the arithmetic of
$X_0^D(N)/W$. Specifically, $a$ must be a rational number such that
$(a\prod_{\tau\text{ ramified}}(-s(\tau)))^{1/2}$ is in the field of
definition of a CM-point of discriminant $d_2$ on $X_0^D(N)/W$. As an
additional check, note that when $\tau_1$ is not a ramified point, the
right-hand side of \eqref{equation: strategy 1} is a polynomial of even
degree and $a$ must be a rational number such that $\sqrt a$
is in the field of definition of a CM-point of discriminant $d_1$ on
$X_0^D(N)/W$.

To determine the coefficients of the polynomial on the right-hand side
of \eqref{equation: strategy 1}, we simply have to know the values of
$s$ and $y^2$ at sufficiently many points. For this purpose, we
observe that $s$ and $y^2$ are both modular functions on
$X_0^D(N)/W_{D,N}$ with divisors supported on CM-divisors. Thus, they
are both realizable as Borcherds forms. (This is proved in Proposition
\ref{proposition: all Borcherds} for the case of even $D$. We do not
try to give a proof for the case of odd $D$, but in practice, we are
always able to realize modular forms encountered as Borcherds forms.)
Then Schofer's formula gives us the absolute values of norms of values
of $s$ and $y^2$ at CM-points.

In order to obtain the actual values of $s$,
not just the absolute values, we let $\wt s$ be another Hauptmodul
with $\wt s(\tau_1)=\infty$, $\wt s(\tau_3)=0$, and $\wt
s(\tau_2)\in\Q$. We may also realize $\wt s$ as a Borcherds
form. Then the absolute values of $s(\tau_3)$ and $\wt s(\tau_2)$
obtained using Schofer's formula determine the relation $\wt s=bs+c$
between $s$ and $\wt s$. If $d$ is a discriminant such that there is
only one CM-point $\tau_d$ of discriminant $d$, then knowing the
values of $|s(\tau_d)|$ and $|\wt s(\tau_d)|=|bs(\tau_d)+c|$ from
Schofer's formula is enough to determine the value of $s(\tau_d)$. If
there are two CM-points $\tau_d$ and $\tau_d'$ of discriminant $d$,
then from the values of $|s(\tau_d)s(\tau_d')|$ and
$|(bs(\tau_d)+c)(bs(\tau_d')+c)|$ we get four possible candidates for
the minimal polynomial of $s(\tau_d)$. In almost all cases we
consider, there is precisely one of the four candidates that have
roots in the correct field. This gives us the values of $s(\tau_d)$
and $s(\tau_d')$. In practice, we do not need information from
discriminants with more than two CM-points.

The determination of values of $y^2$ from absolute values is easier.
For example, when $d$ is a discriminant such that there is only one
CM-point of discriminant $d$ on $X_0^D(N)/W_{D,N}$, $y(\tau_d)$ is
either $\sqrt{|y(\tau_d)^2|}$ or $\sqrt{-|y(\tau_d)^2|}$, but only one
of them is in the correct field.

Having determined values of $s$ and $y^2$ at sufficiently many
CM-points, it is straightforward to determine the equation of
$X_0^D(N)/W$. Then we will either work out equations of $X_0^D(N)/W'$
for various other subgroups $W'$ of $W_{D,N}$ of index $2$ or use
arithemtic properties of $X_0^D(N)$ to determine equations of
$X_0^D(N)$. We will give several examples in the next section.
\end{subsection}

\begin{subsection}{Examples}

\begin{Example} Consider $X_0^{15}(1)$. In \cite[Proposition
  3.2.1]{Jordan}, it is shown that an equation of $X_0^{15}(1)$ is
  $$
    3y^2+(x^2+3)(x^2+243)=0.
  $$
  In this example, we will use Borcherds forms and Schofer's formula
  to obtain this result.

  The curve $X=X_0^{15}(1)$ and its various Atkin-Lehner quotients have
  the following geometric information.
  $$ \extrarowheight3pt
  \begin{array}{c|cl} \hline\hline
  \text{curve} & \text{genus} & \text{elliptic points} \\ \hline
  X & 1 & \CM(-3)^{\times 2} \\
  X/w_3 & 0 & \CM(-3)^{\times 2}, \CM(-12)^{\times 2} \\
  X/w_5 & 1 & \CM(-3) \\
  X/w_{15} & 0 & \CM(-3), \CM(-15)^{\times 2},
                 \CM(-60)^{\times 2} \\
  X/W_{15,1} & 0 & \CM(-3), \CM(-12), \CM(-15), \CM(-60) \\
  \hline\hline
  \end{array}
  $$
  According to the method described in the previous section, we
  should first determine the equation of $X/W$ for some subgroup $W$
  of $W_{15,1}$ of index $2$. Here we choose $W=\gen{w_3}$. The
  double cover $X/w_3\to X/W_{15,1}$ is ramified at the CM-points
  $\tau_{-15}$ and $\tau_{-60}$ of discriminants $-15$ and $-60$. Let
  $s(\tau)$ be a Hauptmodul on $X/W_{15,1}$ taking values $0$
  and $\infty$ at CM-points $\tau_{-12}$ and $\tau_{-3}$ of
  discriminants $-12$ and $-3$, respectively, and satisfying
  $s(\tau_{-40})\in\Q$, where $\tau_{-40}$ is the unique CM-point
  of discriminant $-40$ on $X/W_{15,1}$. Then an equation of $X/w_3$ is
  $$
    y^2=a(s-s(\tau_{-15}))(s-s(\tau_{-60})),
  $$
  where $a=-3r^2$ for some $r\in\Q$ since a CM-point of discriminant
  $-3$ on $X/w_3$ is defined over $\Q(\sqrt{-3})$. The divisor of
  $y^2$, as a function on $X/W_{15,1}$, is $P_{-15}+P_{-60}-2P_{-3}$.
  Let also $\wt s$ be a Hauptmodul with $\wt s(\tau_{-15})=\infty$,
  $\wt s(\tau_{40})=0$, and $\wt s(\tau_{-60})\in\Q$.
  According to our method, we should construct Borcherds forms with
  divisors $P_{-12}-P_{-3}$, $P_{-40}-P_{-3}$, and
  $P_{-15}+P_{-60}-2P_{-3}$. A Borcherds form $P_{-12}-P_{-3}$ is
  constructed in Example \ref{example: 15 2}. Denote this Borcherds
  form by $\psi_1$. Here let us construct
  the other two Borcherds forms.

  Using the notations in Example \ref{example: 15} and letting $h$ be
  the modular form in \eqref{equation: 15 h}, we find that
  $$
    f_{10}-f_7+f_5-2f_4-3h=q^{-10}-3q^{-2}+q^{-1}-35+\cdots
  $$
  and
  $$
    \frac{60e^{2\pi i/8}}{15\sqrt 2}(f_{10}-f_7+f_5-2f_4-3h)|S
   =6q^{-3/4}+c_0+c_1q^{1/60}+\cdots
  $$
  for some $c_j$. Thus, by Lemma \ref{lemma: divisor infinity 0}, the
  Borcherds form $\psi_2$ associated to this modular form has a divisor
  $P_{-40}-P_{-3}$. Also, we have
  \begin{equation*}
  \begin{split}
  &2f_{15}+4f_{13}+2f_{12}-2f_{10}-4f_9-7f_8-10f_7+10f_6+3f_5-23f_4-6h \\
  &\qquad=2q^{-15}-q^{-8}-5q^{-2}-2q^{-1}-78+\cdots
  \end{split}
  \end{equation*}
  and
  \begin{equation*}
  \begin{split}
  &\frac{60e^{2\pi i/8}}{15\sqrt 2}
   (2f_{15}+4f_{13}+2f_{12}-2f_{10}-4f_9-7f_8-10f_7+10f_6+3f_5-23f_4-6h)|S \\
  &\qquad=12q^{-3/4}+c_0'+c_1'q^{1/60}+\cdots
  \end{split}
  \end{equation*}
  for some $c_j'$. Therefore, the Borcherds form $\psi_3$ associated
  to this modular form has a divisor $P_{-15}+P_{-60}-2P_{-3}$. An
  application of Schofer's formula yields the following values of
  Borcherds forms at CM-points.
  $$ \extrarowheight3pt
  \begin{array}{c|ccccccc} \hline\hline
    & -3 & -7 & -12 & -15 & -40 & -43 & -60 \\ \hline
  |\psi_1| & \infty & 1 & 0 & 3 & 1/2 & 1/16 & 1/27 \\
  5^{-3/2}|\psi_2| & \infty & 1/9 & 1/27 & 5/27 & 0 & 1/24 & 25/3^6 \\
  |\psi_3| & \infty & 35/3^6 & 5/2^43^5 & 0 & 5^4/2^63^6 &
           43^15^17^2/2^{12}3^6 & 0\\
  \hline \hline
  \end{array}
  $$
  Observe that multiplying $\psi_j$ by a scalar of absolute value $1$
  does not change the absolute value of its value at a CM-point. Thus,
  we may as well assume that $\psi(\tau_{-15})=-3$,
  $5^{-3/2}\psi_2(\tau_{-15})=5/27$, and
  $\psi_3(\tau_{-7})=-35/3^6$. Also, we choose $s$, $\wt s$, and $y$
  such that $s(\tau_{-15})=-243$, $\wt s(\tau_{-15})=5$, and
  $y(\tau_{-7})^2=-2^43^47$. Therefore,
  we have
  $$
    s=81\psi_1, \qquad\wt s=27\cdot5^{-3/2}\psi_2, \qquad
    y^2=\frac{2^43^{10}}5\psi_3.
  $$
  Then from the table above, we obtain
  $$
    |s(\tau_{-12})|=0, \quad |\wt s(\tau_{-12})|=1, \quad
    |s(\tau_{-40})|=81/2, \quad |\wt s(\tau_{-40})|=0,
  $$
  which implies that $\wt s$ is equal to one of $\pm 2s/81\pm 1$. As
  $s(\tau_{-15})=-243$ and $\wt s(\tau_{-15})=5$, we find that
  $\wt s=-2s/81-1$. Then the table above and the requirement that
  $y(\tau_d)$ must lie in the correct field yield
  $$ \extrarowheight3pt
  \begin{array}{c|ccccccc} \hline\hline
    & -3 & -7 & -12 & -15 & -40 & -43 & -60 \\ \hline
  s & \infty & 81 & 0 & -243 & -81/2 & 81/16 & -3 \\
  \wt s & \infty & -3 & -1 & 5 & 0 & -9/8 & -25/27 \\
  y^2 & \infty & -2^43^47 & -3^5 & 0 & -3^45^3/4 & -3^47^243/2^8 & 0 \\
  \hline\hline
  \end{array}
  $$
  It follows that an equation of $X/w_3$ is $3y^2+(s+243)(s+3)=0$.

  Furthermore, the double cover $X/w_{15}\to X/W_{15,1}$ is ramified
  at CM-points of discriminants $-3$ and $-12$. Thus, an equation of
  $X/w_{15}$ is $x^2=bs$ for some $b$. As CM-points of discriminant
  $-7$ are rational points on $X/w_{15}$, we find that $b$ must be a
  square, which we may assume to be $1$. That is, we have $s=x^2$.
  Therefore, we have $3y^2+(x^2+243)(x^2+3)=0$, which can be taken to
  be an equation of $X$, agreeing with Jordan's result.

  We remark that Elkies \cite{Elkies-computation} has used Schwarzian
  differential equations to compute numerically the values of $s$ at
  many CM-points. (His modular function differs from our $s$ by a factor of
  $-3$.) Using Borcherds forms, we verify that all the entries in
  Table 6 of \cite{Elkies-computation} are correct.
\end{Example}

\begin{Example} Consider the Shimura curve $X=X_0^{26}(1)$. In
  \cite{Victor-genus-two}, Gonz\'alez and Rotger proved that an
  equation of $X$ is
  $$
   y^2=-2x^6+19x^4-24x^2-169.
  $$
  In this example, we will obtain this result using Borcherds forms.

  We have the following informations about $X$ and its Atkin-Lehner
  quotients.
  $$ \extrarowheight3pt
  \begin{array}{c|cl} \hline\hline
  \text{curve} & \text{genus} & \text{elliptic points} \\ \hline
  X & 2 & \text{none} \\
  X/w_2 & 1 & \CM(-8)^{\times 2} \\
  X/w_{13} & 1 & \CM(-52)^{\times 2} \\
  X/w_{26} & 0 & \CM(-104)^{\times 6} \\
  X/W_{26,1} & 0 & \CM(-8),\CM(-52),\CM(-104)^{\times 3} \\
    \hline\hline
  \end{array}
  $$
  The double cover $X/w_{13}\to X/W_{26,1}$ is ramified at the
  CM-point of discriminant $-8$ and the three CM-points of
  discriminant $-104$. Let $s$ be a Hauptmodul on $X/W_{26,1}$
  with $s(\tau_{-8})=\infty$, $s(\tau_{-52})=0$, and
  $s(\tau_{-11})\in\Q$. Then an equation of $X/w_{13}$ is
  $$
    y^2=a\prod_{\tau:\CM\text{-points of discriminant }-104}(s-s(\tau))
  $$
  for some nonzero rational number $a$. As a modular function on
  $X/W_{26,1}$, we have $\div y^2=P_{-104}-3P_{-8}$. Let $\wt s$ be
  another Hauptmodul on $X/W_{26,1}$ with $\wt
  s(\tau_{-8})=\infty$, $\wt s(\tau_{-11})=0$, and $\wt
  s(\tau_{-52})\in\Q$. We now realize $s$, $\wt s$, and $y^2$ as
  Borcherds forms.

  Let $f_j$ be modular forms in $M^!(52)\cap\Z((q))$ with a pole of
  order $j$ at $\infty$ and a leading coefficient $1$ constructed in
  Example \ref{example: 26-1}. Using these $f_j$, we find three
  modular forms
  \begin{equation*}
  \begin{split}
    g_1&=2q^{-13}-2q^{-2}-4q^{-1}+2q-2q^2-2q^3+\cdots, \\
    g_2&=q^{-11}+2q^{-7}-2q^{-2}+4q+4q^4+\cdots, \\
    g_3&=2q^{-26}+6q^{-7}-6q^{-2}+2q^{-1}+10q-8q^2+\cdots
  \end{split}
  \end{equation*}
  in $M^!(52)$. Let $\psi_j$, $j=1,2,3$, be the Borcherds forms
  associated to $g_j$. By Lemma \ref{lemma: divisor at infinity},
  $$
    \div\psi_1=P_{-52}-P_{-8}, \qquad
    \div\psi_2=P_{-11}-P_{-8}, \qquad
    \div\psi_3=P_{-104}-3P_{-8}.
  $$
  Thus, $\psi_j$ are scalar multiples of $s$, $\wt s$, and $y^2$,
  respectively. Applying Schofer's formula, we get
  $$ \extrarowheight3pt
  \begin{array}{c|ccccccc} \hline\hline
    & -8 & -11 & -19 & -20 & -24 & -52 & -67 \\ \hline
  |\psi_1| & \infty & 1 & 9 & 5 & 3 & 0 & 81/25 \\
  |\psi_2| & \infty & 0 & 64 & 32 & 32 & 8 & 2^67/5^2 \\
  13^{-3}|\psi_3| & \infty & 2^{10}11 & 2^{10}19 & 2^{12} & 2^{13}
   & 2^613^5 & 2^{10}41^267/5^6 \\ \hline\hline
  \end{array}
  $$
  Since multiplying $\psi_j$ by a suitable factor of absolute value
  $1$ does not change the absolute value of its value at a CM-point,
  we may as well assume that $\psi_1(\tau_{-11})=1$,
  $\psi_2(\tau_{-52})=8$, and $\psi_3(\tau_{-11})=-2^{10}11^113^3$.
  Also, we choose $s$, $\wt s$, and $y$ in a way such that
  $s(\tau_{-11})=1$, $\wt s(\tau_{-52})=1$, and
  $y(\tau_{-11})^2=-2^411$, i.e., $s=\psi_1$, $\wt s=\psi_2/8$, and
  $y^2=\psi_3/2^613^3$. Then we have $\wt s=1-s$ and from the table
  above we get
  $$ \extrarowheight3pt
  \begin{array}{c|ccccccc} \hline\hline
    & -8 & -11 & -19 & -20 & -24 & -52 & -67 \\ \hline
  s & \infty & 1 & 9 & 5 & -3 & 0 & 81/25 \\
  \wt s=1-s & \infty & 0 & -8 & -4 & 4 & 1 & -56/25 \\
  y^2 & \infty & -2^411 & -2^419 & -2^6 & 2^7 & -13^2 & -2^441^267/5^6
    \\ \hline\hline
  \end{array}
  $$
  (The signs of $y(\tau_d)^2$ are determined by the Shimura reciprocity
  law.) From the data, we easily deduce that the relation between $y$
  and $s$ is
  $$
    y^2=-2s^3+19s^2-24s-169,
  $$
  which is an equation for $X_0^{26}(1)/w_{13}$.

  On the other hand, the cover $X_0^{26}(1)/w_{26}\to
  X_0^{26}(1)/W_{26,1}$ is ramified at the CM-points of discriminants
  $-8$ and $-52$. Thus, there is a modular function $x$ on
  $X_0^{26}(1)/w_{26}$ with $x^2=cs$ for some rational number
  $c$. Since CM-points of discriminant $-11$ are rational points on
  $X_0^{26}(1)/w_{26}$, we conclude that $c$ can be chosen to be $1$.
  Hence $y^2=-2x^6+19x^4-24x^2-169$ is an equation for $X_0^{26}(1)$ and the Atkin-Lehner involutions are given by
  $$
    w_{2}:(x,y)\mapsto\left(-x, -y\right), \qquad
    w_{26}:(x,y)\mapsto(x,-y).
  $$
\end{Example}


\begin{Example} Consider $X=X_0^{111}(1)$. We have the following
  informations.
  $$ \extrarowheight3pt
  \begin{array}{c|cl} \hline\hline
  \text{curve} & \text{genus} & \text{elliptic points} \\ \hline
   X & 7 & \text{none} \\
  X/w_3 & 4 & \text{none} \\
  X/w_{37} & 3 & \CM(-148)^{\times 4} \\
  X/w_{111} & 0 & \CM(-111)^{\times 8}, \CM(-444)^{\times 8} \\
  X/W_{111,1} & 0 & \CM(-148)^{\times 2}, \CM(-111)^{\times 4},
    \CM(-444)^{\times 4} \\ \hline\hline
  \end{array}
  $$
  Let $s$ and $\wt s$ be modular functions on $X/W_{111,1}$ such that
  $s(\tau_{-15})=\wt s(\tau_{-15})=\infty$, $s(\tau_{-60})=0$, $\wt
  s(\tau_{-24})=0$, $s(\tau_{-24})=1$, and $\wt s(\tau_{-60})=1$, so
  that $\wt s=1-s$. Then an equation for $X/w_{37}$ is
  \begin{equation} \label{equation: 111}
    y^2=a\prod_{\tau\in\CM(-111),\CM(-444)}(s-s(\tau)).
  \end{equation}
  As CM-points of discriminant $-60$ on $X/w_{37}$ lie in
  $\Q(\sqrt{-3})$, we choose $y$ such that $y(\tau_{-60})^2=-27$. Then
  realizing $s$, $\wt s$, and $y^2$ as Borcherds forms and using
  Schofer's formula, we deduce the following values of these modular
  functions at rational CM-points.
  $$ \tiny\extrarowheight3pt
  \begin{array}{c|ccccccccc} \hline\hline
    & -15 & -19 & -24 & -43 & -51 & -60 & -163 & -267 & -555 \\ \hline
  s & \infty & 3 & 1 & -3 & -1 & 0 & 3/5 & 1/3 & 5 \\
  y^2 & \infty & -2^83^219 & -2^83 & -2^83^243 & -2^83 & -27
      & -2^83^213^2163/5^8 & -2^813^2/3^7 & -2^83^137^2 \\
    \hline\hline
  \end{array}
  $$
  As the right-hand side of \eqref{equation: 111} is a polynomial of
  degree $8$, these CM-values are not sufficient to determine the
  equation and we will need values of $s$ and $y^2$ at some degree $2$
  CM-points.

  Let $\tau_{-39}$ and $\tau_{-39}'$ be the two CM-points of
  discriminant $-39$ on $X/W_{111,1}$. Schofer's formula yields
  $$
    |s(\tau_{-39})s(\tau_{-39}')|=3, \qquad
    |(1-s(\tau_{-39}))(1-s(\tau_{-39}'))|=4.
  $$
  From the Shimura reciprocity law, we know that
  $s(\tau_{-39})\in\Q(\sqrt{-3})$. Thus,
  $$
    s(\tau_{-39})s(\tau_{-39}')=3, \qquad
    (1-s(\tau_{-39}))(1-s(\tau_{-39}'))=4.
  $$
  From these, we deduce that $s(\tau_{-39})=\pm\sqrt{-3}$. Likewise,
  we find that the values of $s$ at the two CM-points
  $\tau_{-52},\tau_{-52}'$ of discriminants $-52$ are $1\pm
  2\sqrt{-1}$. Also, we have
  $$
    y(\tau_{-39})^2y(\tau_{-39}')^2=2^{16}3^213, \qquad
    y(\tau_{-52})^2y(\tau_{-52}')^2=2^{16}13^2.
  $$
  These data are enough to determine the equation of $X/w_{37}$. We
  find that it is
  \begin{equation} \label{equation: 111 2}
   y^2=-(3s^4-6s^3+28s^2-10s+1)(s^4-2s^3+4s^2+18s+27).
  \end{equation}

  Similarly, we can compute an equation for $X/w_{111}$ by observing
  that $X/w_{111}\to X/W_{111,1}$ is ramified at the two CM-points of
  discriminant $-148$, constructing a Borcherds form with divisor
  $P_{-148}-2P_{-15}$, and evaluating at various CM-points and obtain
  $$
    t^2=5s^2-18s+45.
  $$
  The conic has rational points $(s,t)=(3,\pm 6)$ corresponding the
  two CM-points of discriminant $-19$ on $X/w_{111}$, so it admits a
  rational parameterization. Specifically, let $x$ be a Hauptmodul on
  $X/w_{111}$ that has a pole and a zero at the two CM-points of
  discriminant $-19$, respectively, and takes rational values at
  CM-points of discriminant $-43$. (In terms of $(s,t)$, the
  coordinates are $(-3,\pm 12)$.) Then
  $$
    x=\frac{c(s-3)}{s-t+3}
  $$
  for some rational number $c$. Choose $c=2$ so that it takes values
  $\pm 1$ at the CM-points of discriminant $-43$. We have
  $$
    (s,t)=\left(\frac{3x^2-3x-3}{x^2+x-1},\frac{6x^2+6}{x^2+x-1}\right).
  $$
  Plugging in $s=(3x^2-3x-3)/(x^2+x-1)$ in \eqref{equation: 111 2} and
  making a slight change of variables, we find that an equation of
  $X_0^{111}(1)$ is
  \begin{equation*}
  \begin{split}
    z^2&=-(x^8-3x^5-x^4+3x^3+1) \\
    &\qquad\times(19x^8-44x^7-16x^6+55x^5+37x^4-55x^3-16x^2+44x+19)
  \end{split}
  \end{equation*}
  with the actions of the Atkin-Lehner involutions given by
  $$
    w_{37}:(x,z)\mapsto\left(-\frac1x,\frac z{x^8}\right), \qquad
    w_{111}:(x,z)\mapsto(x,-z).
  $$
\end{Example}

\begin{Example} Consider $X=X_0^{146}(1)$. Let $s$ be the Hauptmodul
  of $X/W_{146,1}$ such that $s(\tau_{-43})=0$,
  $s(\tau_{-11})=\infty$, and $s(\tau_{-20})=1$. Let $y$ be a modular
  function on $X/w_{73}$ such that $y^2$ is a modular function on
  $X/W_{146,1}$ with $\div y^2=P_{-584}-8P_{-11}$. Realizing $s$ and
  $y^2$ as Borcherds forms and suitably scaling $y^2$, we find that an
  equation for $X/w_{73}$ is
  \begin{equation} \label{equation: 146}
    y^2=-11s^8+82s^7-309s^6+788s^5-1413s^4+1858s^3-1803s^2+1240s-688.
  \end{equation}
  Similarly, we find that an equation for $X/w_{146}$ is $t^2=s^2+4$,
  where the roots of $s^2+4$ correspond the to CM-points of
  discriminant $-292$. We choose a rational parameterization of the
  conic to be
  $$
    (s,t)=\left(\frac{x^2-1}x,\frac{x^2+1}x\right),
  $$
  where $x$ is actually a modular function on $X/w_{146}$ that has a
  pole and a zero at the two CM-points of discriminant $-11$ and is
  equal to $\pm 1$ at the two CM-points of discriminant $-43$ on
  $X/w_{146}$. Substituting $s=(x^2-1)/x$ in \eqref{equation: 146} and
  making a change of variables, we find that an equation for $X$ is
  \begin{equation*}
  \begin{split}
    z^2&=-11x^{16}+82x^{15}-221x^{14}+214x^{13}+133x^{12}-360x^{11}-170x^{10}
       +676x^9 \\
   &\qquad-150x^8-676x^7-170x^6+360x^5+133x^4-214x^3-221x^2-82x-11,
  \end{split}
  \end{equation*}
  where the Atkin-Lehner involutions are given by
  $$
    w_{73}:(x,y)\mapsto\left(-\frac 1x,\frac y{x^8}\right), \qquad
    w_{146}:(x,y)\mapsto(x,-y).
  $$
\end{Example}

\begin{Example}
  Let $X=X_0^{14}(5)$. Let $s$ be the Hauptmodul of $X/W_{14,5}$ such
  that $s(\tau_{-4})=\infty$, $s(\tau_{-11})=1$, and
  $s(\tau_{-35})=0$. We find that an equation for $X/\gen{w_5,w_7}$ is
  $$
    y^2=-16s^3-347s^2+222s-35,
  $$
  which is isomorphic to the elliptic curve $E_{\text{14A5}}$ in Cremona's table
  \cite{Cremona}. (In fact, we can use Cerednik-Drinfeld theory of
  $p$-adic uniformization of Shimura curves \cite{Boutot-Carayol} to
  determine the singular fibers of $X/\langle w_5,w_7\rangle$ and conclude that
  it is isomorphic to $E_{\text{14A5}}$.) The double cover $X/\gen{w_5,w_{14}}\to
  X/W_{14,5}$ is ramified at the CM-point of discriminant $-4$ and the
  CM-point of discriminant $-35$, so that there is a Hauptmodul $t$ of
  $X/\gen{w_5,w_{14}}$ such that $t^2=cs$ for some rational number
  $c$. In addition, the CM-points of discriminant $-11$ on
  $X/\gen{w_5,w_{14}}$ are rational points. Thus, we may choose $c=1$
  and find that an equation for $X/w_5$ is
  \begin{equation} \label{equation: 14 5}
    y^2=-16t^6-347t^4+222t^2-35.
  \end{equation}

  We next determine an equation of $X/w_{14}$. The double cover
  $X/w_{14}\to X/\gen{w_5,w_{14}}$ is ramified at the two CM-points of
  discriminant $-280$. Using Schofer's formula, we find
  $s(\tau_{-280})=5/16$ and thus, an equation for $X/w_{14}$ is
  $u^2=d(16t^2-5)$ for some rational number. The point such that
  $t=0$ is the CM-point of discriminant $-35$. Therefore, we may
  choose $d=-1$ and find that an equation for $X/w_{14}$ is
  $$
    u^2+16t^2=5.
  $$
  This is a conic with rational points and a rational parameterization
  is
  $$
    (t,u)=\left(\frac{x^2-x-1}{2x^2+2},\frac{x^2+4x-1}{x^2+1}\right).
  $$
  Substituting $t=(x^2-x-1)/(2x^2+2)$ into \eqref{equation: 14 5} and
  making a change of variables, we conclude that an equation for
  $X_0^{14}(5)$ is
  $$
    z^2=-23x^8-180x^7-358x^6-168x^5-677x^4+168x^3-358x^2+180x-23,
  $$
  on which the actions of the Atkin-Lehner operators are given by
  $$
    w_2:(x,z)\mapsto\left(-\frac1x,\frac z{x^4}\right), \qquad
    w_{14}:(x,z)\mapsto(x,-z),
  $$
  and
  $$
    w_{35}:(x,z)\mapsto\left(\frac{x+2}{2x-1},\frac{25z}{(2x-1)^4}\right).
  $$

  Note that $X_0^{14}(5)/w_{14}$ is an example of Shimura curves of
  genus zero that is isomorphic to $\P^1$ over $\Q$ but none of the
  rational points is a CM-point.
\end{Example}

\begin{Example} Let $X=X_0^{10}(19)$. Let $s$ be the Hauptmodul of
  $X/W_{10,19}$ such that $s(\tau_{-8})=0$, $s(\tau_{-40})=\infty$,
  and $s(\tau_{-3})=1$. We find that an equation for
  $X/\gen{w_2,w_{95}}$ is
  $$
    y^2=-8s^3+57s^2-40s+16,
  $$
  which is isomorphic to the elliptic curve $E_{\text{190A1}}$ in Cremona's table
  \cite{Cremona}. Also, the double cover $X/\gen{w_5,w_{38}}\to
  X/W_{10,19}$ is ramified at the CM-point of discriminant $-8$ and
  the CM-point of discriminant $-40$. The CM-points of discriminant
  $-3$ are rational points on $X/\gen{w_5,w_{38}}$. Thus, arguing as
  before, we deduce that an equation for $X/w_{190}$ is
  $y^2=-8x^6+57x^4-40x^2+16$. Moreover, the double cover $X/w_{38}\to
  X/\gen{w_5,w_{38}}$ is ramified at the two CM-points of discriminant
  $-760$. Since $s(\tau_{760})=32/5$ and the point with $s=0$ is a
  CM-point of discriminant $-8$, we see that an equation for $X/w_{38}$ is
  $z^2=5x^2-32$. We conclude that an equation for $X$ is
  $$
    \begin{cases}
    y^2=-8x^6+57x^4-40x^2+16, \\
    z^2=5x^2-32, \end{cases}
  $$
  with the actions of the Atkin-Lehner involutions given by
  \begin{equation*}
  \begin{split}
    w_2&:(x,y,z)\mapsto(-x,y,z), \\
    w_5&:(x,y,z)\mapsto(x,-y,-z), \\
    w_{19}&:(x,y,z)\mapsto(-x,-y,z).
  \end{split}
  \end{equation*}
  Note that as the conic $z^2=5x^2-32$ has only real points, but no
  rational points, the Shimura curve $X$ is hyperelliptic over $\R$,
  but not over $\Q$.
\end{Example}

\begin{Remark}
  In \cite{Ogg}, Ogg mentioned that $X_0^{10}(19)$ and
  $X_0^{14}(5)$ are the only two hyperelliptic curves that he could not
  determine whether they are hyperelliptic over $\Q$. Our computation
  shows that $X_0^{14}(5)$ is hyperelliptic over $\Q$ because the
  curve $X_0^{14}(5)/w_{14}$ has rational points, but $X_0^{10}(19)$
  is not hyperelliptic over $\Q$.
\end{Remark}

\begin{Remark} \label{remark: Tu}
Note that there is a curve, namely, $X=X_0^{15}(4)$, whose equation is
not obtained using our method. This is because the normalizer of the
Eichler order in this case is larger than the Atkin-Lehner group.
For this special curve, we use the result of Tu \cite{Tu}. In Lemma 13
of \cite{Tu}, it is shown that there is a Hauptmodul $t_4$ on
$X/\gen{w_3,w_5}$ that takes values $\pm1/\sqrt{-3}$,
$\pm\sqrt{-15}/5$ and $(\pm1\pm\sqrt{-15})/8$ at CM-points of
discriminants $-12$, $-15$, and $-60$, respectively. Since the double
cover $X/w_3\to X/\gen{w_3,w_5}$ ramifies at CM-points of
discriminants $-15$ and $-60$, while the cover $X/w_{15}\to
X/\gen{w_3,w_5}$ ramifies at CM-points of discriminant $-12$, we find
that there are rational numbers $a$ and $b$ such that the equations of
$X/w_3$ and $X/w_{15}$ are
$$
  y^2=a(4t_4^2-t_4+1)(4t_4^2+t_4+1)(5t_4^2+3), \qquad
  z^2=b(3t_4^2+1),
$$
respectively. To determine the constants $a$ and $b$, we further
recall that Lemma 13 of \cite{Tu} shows that there is a Hauptmodul
$t_2$ on $X_0^{15}(2)/\gen{w_3,w_5}$ with
$$
  t_2=\frac{5t_4^2+2t_4+1}{7t_4^2-2t_4+3}.
$$
From this, the CM-values of $t_2$ obtained using Schofer's formula,
and arithmetic properties of CM-points, we see that we can choose
$a=b=-1$. Note that $X_0^{15}(4)$ is one of the hypereliiptic Shimura
curves that are not hyperelliptic over $\R$ (see \cite{Ogg}).
\end{Remark}
\end{subsection}

\begin{subsection}{Additional examples}
\label{subsection: additional}
In the previous section, we determine the equations of hyperelliptic
Shimura curves $X_0^D(N)$ whose Atkin-Lehner involutions act as
hyperelliptic involutions. In particular, the curves
$X_0^D(N)/W_{D,N}$ are of genus $0$, so that Lemma \ref{lemma: genus
  0} applies and we have a simple criterion for a Borcherds form to
have a trivial character. Throughout this section, we make the following
assumption.

\begin{Assumption} \label{assumption: 1}
  The criterion for a Borcherds form to have a trivial character is
  also valid for the case when $N_B^+(\O)\backslash\H$ has a positive genus.
\end{Assumption}

\begin{Remark} Recall that a Fuchsian group of the first kind is
  generated by some elements
  $\alpha_1,\ldots,\alpha_g,\beta_1,\ldots,\beta_g,\gamma_1,\ldots,\gamma_n$
  with defining relations
  $$
    [\alpha_1,\beta_1]\ldots[\alpha_g,\beta_g]\gamma_1\ldots\gamma_n=1,
    \qquad \gamma_i^{k_i}=1, \quad i=1,\ldots,n,
  $$
  where $\alpha_j,\beta_j$ are hyperbolic elements,
  $[\alpha_j,\beta_j]$ denotes the commutator, $g$ is the genus, and
  $k_i$ is an integer $\ge 2$ or $\infty$. (See, for instance,
  \cite{Katok}.) Let $\chi$ be the character of a Borcherds form on
  $N_B^+(\O)\backslash\H$. The proof of Lemma \ref{lemma: genus 0}
  given in \cite{Yang-CM} shows that $\chi(\gamma_i)=1$ for all $i$ 
  if and only if the condition in Lemma \ref{lemma: genus 0} holds.
  Thus, what we really assume in Assumption \ref{assumption: 1} is
  that for all hyperbolic elements $\alpha$, we have $\chi(\alpha)=1$.
\end{Remark}

It turns out that sometimes our methods can also be used to determine
equations of $X_0^D(N)/W_{D,N}$ even when they have positive
genera, under Assumption \ref{assumption: 1}. However, the method
becomes less  systematic and it is not clear whether our methods will
always work in general, so we will only give two examples in this section.

\begin{Example} \label{example: 142}
Let $X=X_0^{142}(1)/W_{142,1}$. It is of genus $1$ and has
rational points (for instance, the CM-point of discriminant $-3$).
Thus, $X$ is a rational elliptic curve. From the Jacquet-Langlands
correspondence, we know that it must lie in the isogeny class 142A in
Cremona's table \cite{Cremona}, whose corresponding cusp form on
$\Gamma_0(142)$ has eigenvalues $-1$ for the Atkin-Lehner involutions
$w_2$ and $w_{71}$. Since the isogeny class contains only 
one curve, we immediately conclude that the equation for
$X$ is $E_{\text{142A1}}:y^2+xy+y=x^3-x^2-12x+15$. Here we
will use our method to get the same conclusion. An advantage of our
method is that we can determine the coordinates of all CM-points on
the curve. In the next section, we will discuss the heights of these
CM-points and verify Zhang's formula \cite{Zhang} for heights of
CM-points in this particular case.

By finding many suitable eta-products, we construct $4$ modular forms
$f_1,f_2,f_3,f_4$ in $M^!(284)$ with Fourier expansions
\begin{equation*}
\begin{split}
  f_1&=-2q^{-87} - 2q^{-71} - 2q^{-48} - 2q^{-36} + 2q^{-16} - 2q^{-15} - 
    2q^{-12} + 2q^{-9} \\
  &\qquad\qquad  - 2q^{-7} - 2q^{-3} + 2q^{-2} +
  2q^{-1} - 4q\cdots, \\
  f_2&=2q^{-116} - q^{-87} - q^{-79} + 2q^{-71} + 2q^{-60} - 2q^{-48} + q^{-43} - 
    2q^{-29} \\
  &\qquad\qquad + q^{-19} - 4q^{-15} - 2q^{-12} + 2q^{-7} - 2q^{-3} -4q +\cdots,
    \\
  f_3&=q^{-87} - 2q^{-79} + q^{-76} - 2q^{-71} + 2q^{-48} - q^{-40} + 3q^{-32} - 
    2q^{-20} \\
  &\qquad\qquad - q^{-19} - 2q^{-12} + 2q^{-10} + q^{-8} - 2q^{-7} -
    2q^{-2} + 4q + \cdots, \\
  f_4&=-q^{-79} + q^{-76} + q^{-48} - q^{-40} + q^{-32} - q^{-20} - q^{-12} + q^{-10} +
    q^{-6} \\
  &\qquad\qquad - q^{-5} - q^{-2} - q^7 +\cdots.
\end{split}
\end{equation*}
Let $\psi_j$, $j=1,\ldots,4$, be the Borcherds form associated to
$f_j$. Under Assumption \ref{assumption: 1}, these Borcherds forms
have trivial characters. We have
\begin{equation*}
\begin{split}
  \div\psi_{1}=P_{-4}+P_{-8}-2P_{-3}, \quad
 &\div\psi_{2}=P_{-19}+P_{-43}-2P_{-3}, \\
  \div\psi_{3}=P_{-8}+P_{-40}-2P_{-20}, \quad
 &\div\psi_{4}=P_{-19}+P_{-24}-2P_{-20}.
\end{split}
\end{equation*}
It is easy to show that $\psi_{2}$ is a polynomial of degree $1$ in
$\psi_{1}$ and $\psi_{4}$ is a polynomial of degree $1$ in
$\psi_{3}$. Thus, there are modular functions $x$ and $y$ on
$X$ such that $x$ has a double pole at $\tau_{-3}$ with
$x(\tau_{-4})=x(\tau_{-8})=0$ and $x(\tau_{-19})=x(\tau_{-43})=1$ and $y$ has a
double pole at $\tau_{-20}$ with $y(\tau_{-8})=y(\tau_{-20})=0$ and
$y(\tau_{-19})=y(\tau_{-24})=1$. Computing singular moduli using Schofer's
formula and choosing proper scalars of modulus $1$ for $\psi_j$, we find
$$
  x=2^{-10}\psi_{1}, \quad 1-x=\psi_{2}, \quad
  y=\psi_{3}/2, \quad 1-y=\psi_{4}/2,
$$
and the values of $x$ and $y$ at various CM-points are
$$ \extrarowheight3pt
\begin{array}{c|cccccccccc} \hline\hline
  & -3 & -4 & -8 & -19 & -20 & -24 & -40 & -43 & -148 & -232 \\ \hline
x & \infty & 0 & 0 &1 & -1 & 1/2 & -1/2 & 1 & -1 & -1/2 \\
y & 2 & 1/2 & 0 & 1 & \infty & 1 & 0 & 3/2 & -2 & -5 \\ \hline\hline
\end{array}
$$
Since $y(\tau_{-4})\neq y(\tau_{-8})$, $y$ cannot lie in $\C(x)$. Therefore,
$x$ and $y$ generate the field of modular functions on $X$.
From the table above, we determine that the relation between $x$ and
$y$ is
$$
  2(x+1)^2y^2-(8x^2+11x+1)y+4x(2x+1)=0.
$$
Set
\begin{equation*}
\begin{split}
  x_1&=-\frac{2(x+1)^2y-5x^2-3x-1}{x^2}, \\
  y_1&=-\frac{(4x^3+6x^2-2)y-5x^3-6x^2+x+1}{x^3}.
\end{split}
\end{equation*}
We find $y_1^2+x_1y_1+y_1=x_1^3-x_1^2-12x_1+15$, which is indeed the
elliptic curve $E_{\text{142A1}}$. The coordinates of the CM-points
above on this model are
$$ \small\extrarowheight3pt
\begin{array}{cccccccccc} \hline\hline
  -3 & -4 & -8 & -19 & -20 & -24 & -40 & -43 & -148 & -232 \\ \hline
  -Q & -2Q & O & Q & 2Q & 3Q & 4Q & -3Q & -4Q & -6Q \\ \hline\hline
\end{array}
$$
where $Q=(1,1)$ generates the group of rational points on $E_{\text{142A1}}$. 
\end{Example}

\begin{Example} \label{example: 302}
  We next consider $X=X_0^{302}(1)/W_{302,1}$, which has genus
  $2$. We can construct four modular forms $f_1,\ldots,f_4$ in
  $M^!(604)$ whose associated Borcherds forms $\psi_1,\ldots,\psi_4$
  have divisors
\begin{equation*}
\begin{split}
  \div\psi_{1}&=P_{-43}+P_{-72}-P_{-19}-P_{-88}, \\
  \div\psi_{2}&=P_{-20}+P_{-36}-P_{-19}-P_{-88}, \\
  \div\psi_{3}&=P_{-8}+2P_{-40}-P_{-4}-2P_{-88}, \\
  \div\psi_{4}&=P_{-11}+P_{-19}+P_{-43}-P_{-4}-2P_{-88},
\end{split}
\end{equation*}
respectively. In addition, under Assumption \ref{assumption: 1}, they
have trivial characters. Thus, $\psi_{1}$ generates the unique
genus-zero subfield of degree $2$ of the hyperelliptic function field, and
$\psi_{2}$ is a polynomial of degree $1$ in $\psi_{1}$. Also,
$\psi_{4}$ must be a polynomial of degree $1$ in $\psi_{3}$. To
see this, we observe that there exists a suitable linear combination
$a\psi_{3}+b\psi_{4}$ such that it is a function of degree less
than or equal to $2$ on $X$ and hence is contained in
$\C(\psi_{1})$. If this linear combination is not a constant
function, then it must have a pole at $\tau_{-88}$; otherwise it will
have only a pole of order $1$ at $\tau_{-4}$, which is impossible.
It follows that $\tau_{-19}$ is also a pole of this linear combination.
However, $\tau_{-19}$ can never be a pole of this function. Therefore, we
conclude that this linear combination is a constant function.

Let $x$ be the unique function on $X$ with
$\div x=\div\psi_{1}$ and $x(\tau_{-20})=2$ and $y$ be the unique
function with $\div y=\div\psi_{3}$ and $y(\tau_{-11})=1$. Computing
using Schofer's formula, we find
$$ \extrarowheight3pt
\begin{array}{c|cccccccccc} \hline\hline
d & -4 & -8 & -11 & -19 & -20 & -40 & -43 & -88 & -148 & -232 \\ \hline
x & -1 & 3/2 & 1 & \infty & 2 & 1 & 0 & \infty & 5/3 & 5/3 \\ 
y & \infty & 0 & 1 & 1 & -1 & 0 & 1 & \infty & -1/9 & -1/2 \\
\hline\hline
\end{array}
$$
From the coordinates at $\tau_{-4}$, $\tau_{-8}$, $\tau_{-19}$, $\tau_{-40}$, and
$\tau_{-88}$, we see that the relation between $x$ and $y$ is
$$
  a(x+1)y^2+(-2x^3+bx^2+cx+d)y+(2x-3)(x-1)^2=0
$$
for some rational numbers $a$, $b$, $c$, and $d$. Then the information
at the other CM-points yields
$$
  a=1, \quad b=11, \quad c=-13, \quad d=2.
$$
Setting
$$
  x_0=\frac{3-x}{1-x}, \qquad
  y_0=\frac{4(2x^3-11x^2+13x-2-2xy-2y)}{(1-x)^3},
$$
we get a Weierstrass model
$$
  y_0^2=x_0^6-18x_0^4+113x_0^2-32
$$
for $X$. Then letting
$$
  x_1=x_0^2, \quad y_1=y_0, \quad
  x_2=-32/x_0^2, \quad y_2=32y_0/x_0^3,
$$
we obtain modular parameterization of two elliptic curves
$$
  y_1^2=x_1^3-18x_1^2+113x_1-32, \qquad y_2^2=x_2^3+113x_2^2+576x_2+1024.
$$
The minimal models of these two elliptic curves are
$E_{\text{302C1}}:Y^2+XY+Y=X^3-X^2+3$ and
$E_{\text{302A1}}:Y^2+XY+Y=X^3+X^2-230X+1251$,
respectively, in Cremona's table. The coordinates of the
CM-points on the two curves are
$$ \tiny\extrarowheight3pt
\begin{array}{c|cccccccccc} \hline\hline
  & -4 & -8 & -11 & -19 & -20 & -40 & -43 & -88 & -148 & -232 \\ \hline
E_{\text{302A1}} & 2P-Q & 3P-Q & 2P & 4P & P & 3P & 3P-Q & P & 3P+Q &
2P-Q \\
E_{\text{302C1}} & 5R & R & O & 2R & 2R & O & -R & -2R & 5R & -5R \\
\hline\hline
\end{array}
$$
where $P=(-32,256)$ generates the torsion subgroup of order $5$ and
$Q=(-96,320)$ generates the free part of $E_{\text{302A1}}(\Q)$, and
$R=(9,16)$ generates the group of rational points on
$E_{\text{302C1}}$. In the next section,
we will address the issue of heights of CM-points on the Jacobians
of these elliptic curves.
\end{Example}
\end{subsection}
\end{section}

\begin{section}{Heights of CM-divisors on Shimura curves}
\label{section: height}

In this section, we will show how to use Borcherds forms to
explicitly compute height of CM-points on certain Shimura curves,
under Assumption \ref{assumption: 1}. This enables us
to verify Zhang's formula \cite{Zhang} for heights of CM-points in certain
cases.

\begin{subsection}{Zhang's formula}
Zhang's formula \cite{Zhang} holds for Shimura curves over general
totally real number fields. For the case of Shimura curves over $\Q$,
his formula can be described as follows.

Let $X=X_0^D(N)$ be a Shimura curve. For a newform $f$ on
$X_0^D(M)$, $M|N$, let $a_f(p)$, $p\nmid DN$, denote the eigenvalue
of the Hecke operator $T_p$ for $f$. The Hecke algebra acts on the
Jacobian $\Jac(X)$. We let $J_f$ be the maximal abelian subvariety
of $\Jac(X)$ annihilated by all $T_p-a_f(p)$, $p\nmid DN$. Then
$\Jac(X)$ is isogenous to $\prod_{[f]}J_f$, where the product
runs over all Galois conjugacy classes of newforms on $X_0^D(M)$,
$M|N$.

Assume that $D\neq 1$ so that $X$ has no cusps. Define a canonical
divisor class
\begin{equation} \label{equation: canonical divisor}
  \xi=\frac1{\mathrm{Vol}(X)}\left(
  [\Omega_X^1]+\sum_{P\in X}\left(1-\frac1{e_P}\right)[P]\right)
\end{equation}
of degree $1$ in $\Pic(X)\otimes\Q$, where
$$
  \mathrm{Vol}(X)=\frac1{2\pi}\int_X\frac{dxdy}{y^2}
 =\frac{DN}6\prod_{p|D}\left(1-\frac1p\right)
  \prod_{p|N}\left(1+\frac1p\right)
$$
and $e_P$ is the cardinality of the stabilizer subgroup of $P$ in
$X_0^D(N)$ and let $\iota:X\to\Jac(X)\otimes\Q$ be defined by
$\iota(P)=P-\xi$. There exists a positive integer $n$ such that
$n\iota$ is defined over $\Q$.

For a discriminant $d<0$ such that a CM-point $x$ of
discriminant $d$ exists on $X$, let $H_d$ denote the field of
definition of $x$ and set
\begin{equation} \label{equation: CM divisor}
  z=\frac1{e_x}\sum_{\sigma\in\mathrm{Gal}(H_d/\Q(\sqrt d))}\iota(x^\sigma).
\end{equation}
We have $z\in\Jac(X)(\Q(\sqrt d))\otimes\Q$. Let $z_f$ be the
$f$-isotypical component of $z$ in $J\otimes\R$. Then Zhang's formula
gives the N\'eron-Tate height $\gen{z_f,z_f}$ of $z_f$ in terms of the
derivative of $L_d(f,s)=L(f,s)L(f,d,s)$ at $s=1$, where $L(f,d,s)$
denotes the $L$-function of the twist of $f$ by the Kronecker
character associated to $\Q(\sqrt d)$ over $\Q$.

\begin{theorem}[\cite{Zhang}] Under the setting above, we have $L_d(f,1)=0$
  and there exists an explicit nonzero constant $C_f$ depending only
  on $f$ such that
  $$
    \gen{z_f,z_f}=C_f\sqrt{|d|}L_d'(f,1)
  $$
  for all fundamental discriminants $d$ with $(d,DN)=1$.
\end{theorem}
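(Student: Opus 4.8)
The statement is the main theorem of Zhang \cite{Zhang}, which extends the Gross--Zagier formula \cite{Gross-Zagier} from modular curves to Shimura curves over totally real fields; the plan is to follow that strategy. I would first dispose of the vanishing $L_d(f,1)=0$: via the Jacquet--Langlands correspondence $f$ matches a classical newform, and the hypothesis $(d,DN)=1$ together with the ramification of $B$ at the primes dividing $D$ pins down the global root number of $L_d(f,s)=L(f,s)L(f,d,s)$ to be $-1$, so that $s=1$ is forced to be a zero of even-to-odd type and $L_d'(f,1)$ is the leading Taylor coefficient there. The bulk of the work is then to evaluate the height $\gen{z_f,z_f}$ and the derivative $L_d'(f,1)$ separately and to show that the two agree up to an explicit $d$-independent constant.

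For the height, the first step is to write the N\'eron--Tate pairing $\gen{z_f,z_f}$ as a sum of local heights over the places of $\Q(\sqrt d)$ (equivalently, over the places of $\Q$ after taking the trace down of the CM-cycle $z$). At a finite place I would interpret the local contribution as an arithmetic intersection multiplicity of the CM-cycle with its Hecke translates on a regular integral model of $X$; using the Cerednik--Drinfeld uniformization to describe the fibers of bad reduction, together with Gross's theory of quasicanonical liftings, this multiplicity can be expressed through representation numbers of the quadratic order of discriminant $d$ by the ternary norm form $\nm$ on the trace-zero lattice $L$ of Section \ref{section: Borcherds forms} and by its analogues for the nearby definite quaternion algebras. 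At the archimedean place the local pairing is a finite sum of values of a Green's function for the divisor $z$, which I would evaluate by unfolding it against a real-analytic Eisenstein series. Packaging all the local terms produces a ``height generating series'' whose $n$-th Fourier coefficient is the sought-for intersection datum.

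On the analytic side, the plan is to realize $L_d(f,s)$ by a Rankin--Selberg integral, as a Petersson pairing of $f$ (or of $f$ twisted by a suitable theta series) against a real-analytic Eisenstein series $E(\tau,s)$, then to differentiate at the center and apply holomorphic projection, so that $L_d'(f,1)$ becomes the Petersson pairing of $f$ with the holomorphic projection of $E'(\tau,1)$. The Fourier coefficients of $E'(\tau,1)$ are sums of derivatives of local Whittaker functions --- precisely the derivatives of Fourier coefficients of incoherent Eisenstein series, of the same type as the quantities $\kappa_\gamma(m)$ that already enter Schofer's formula (Theorem \ref{Schofers formula}).

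The hard part will be the term-by-term comparison of the two kernels: one must show that the $n$-th Fourier coefficient of the holomorphic projection of $E'(\tau,1)$ coincides, after the archimedean normalization, with the $n$-th coefficient of the height generating series assembled above. At the finite places this amounts to an arithmetic Siegel--Weil identity matching derivatives of local Whittaker functions with local intersection numbers, and at infinity it matches the derivative of the archimedean Whittaker function with the Green's function value; this local harmony is the technical core of the whole argument. Once both kernels are projected onto the $f$-isotypic component, reading off the resulting proportionality yields $\gen{z_f,z_f}=C_f\sqrt{|d|}\,L_d'(f,1)$, with $C_f$ an explicit product of the Petersson norm of $f$, local root numbers, and volume factors, manifestly independent of $d$. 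For the full details I would refer to \cite{Zhang}.
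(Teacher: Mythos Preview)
The paper does not prove this theorem at all: it is stated purely as a citation of Zhang's result \cite{Zhang}, with no argument given, and is used in Section~\ref{section: height} only as a benchmark against which the authors numerically verify their explicit height computations. So there is no ``paper's own proof'' to compare your proposal against.

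That said, your sketch is a faithful outline of the actual strategy in \cite{Zhang} (and of the original Gross--Zagier argument it generalizes): root-number computation for the vanishing, decomposition of the N\'eron--Tate height into local terms computed via arithmetic intersection on integral models and archimedean Green's functions, Rankin--Selberg realization of $L_d(f,s)$ and holomorphic projection of the derivative of an Eisenstein series, and a place-by-place matching of the two kernels. As a high-level plan this is correct; of course the genuine content lies in the local computations you allude to, and your closing ``for the full details I would refer to \cite{Zhang}'' is, in this instance, exactly what the paper itself does.
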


For our purpose, we will consider Shimura curves of the form
$X_0^D(N)/W_{D,N}$.
Let $X=X_0^D(N)/W_{D,N}$ with $D\neq 1$. Define $\xi$ as in
\eqref{equation: canonical divisor}, but with the formula for
$\mathrm{Vol}(X)$ replaced by
$$
  \mathrm{Vol}(X)=\frac{DN}{6|W_{D,N}|}\prod_{p|D}
  \left(1-\frac1p\right)\prod_{p|N}\left(1+\frac1p\right).
$$
Define
$$
  z=\sum_{\sigma: H_d\hookrightarrow\C} x^\sigma
$$
and $z_f$ analogously as in \eqref{equation: CM divisor}. Note that
this time we have $z\in\Jac(X)(\Q)\otimes\Q$. Also, for a newform $f$
on $X_0^D(N)/W_{D,N}$, the sign of the functional equation for
$L(f,s)$ is necessarily $-1$. Therefore, Zhang's formula becomes
$$
  \gen{z_f,z_f}=C_f'\sqrt{|d|}L(f,d,1)
$$
for all fundamental discriminants $d$ with $(d,DN)$ for some nonzero
constant $C_f'$. In this section, we will discuss how to explicitly
compute $z_f$ and thus verify numerically Zhang's formula for a given
Shimura curve. We will consider $X_0^{142}(1)/W_{142,1}$ and
$X_0^{302}(1)/W_{302,1}$ as our primary examples.

Throughout the rest of this section, for a Shimura curve $X$ of the
form $X_0^D(N)/W_{D,N}$ and a discriminant $d<0$, we let $h_d$ be
the number of CM-points of discriminant $d$ on $X$. Also, let
$P_d^{\times h_d}$ denote the CM-divisor
$$
  P_d^{\times h_d}=\sum(\text{CM-points of discriminant }d)
$$
of discriminant $d$ (see Definition \ref{definition: CM-divisor}).
If $h_d=1$, we will simply write $P_d^{\times 1}$ as $P_d$. Let
$\Div_\CM(X)$ denote the subgroup of $\Div(X)$ generated
by all CM-divisors and $\Div_\CM^0(X)$ be the
subgroup of $\Div_\CM(X)$ of degree $0$. Set also
$\Pic_\CM(X)=\Div_\CM(X)/P_\CM(X)$ and
$J_\CM(X)=\Div_\CM^0(X)/P_\CM(X)$, where $P_\CM(X)$
denotes the group of principal divisors contained in $\Div_\CM(X)$.
Since a CM-divisor is defined over $\Q$, $J_\CM(X)$ is a subgroup of
$\Jac(X)(\Q)$.




\end{subsection}

\begin{subsection}{Example $X_0^{142}(1)/W_{142,1}$}

\begin{Lemma} 
  Assume that Assumption \ref{assumption: 1} holds for the Shimura
  curve $X=X_0^{142}(1)/W_{142,1}$. Then every divisor in
  $P_\CM(X)$ can be realized as the divisor of a Borcherds form.
  Moreover, the group $J_\CM(X)$ is a free abelian group of rank $1$
  generated by the divisor class of $P_{-3}-P_{-4}$.
\end{Lemma}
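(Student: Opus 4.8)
The plan is to pin down the curve first, then read off the group $J_\CM(X)$ by a one-line argument, and finally realize principal CM-divisors by the construction of Section \ref{section: construction}. \textbf{Step 1 (the curve).} From Example \ref{example: 142}, $X=X_0^{142}(1)/W_{142,1}$ is isomorphic over $\Q$ to the elliptic curve $E_{\mathrm{142A1}}$, whose Mordell--Weil group is $\Z\cdot Q$ with $Q=(1,1)$ of infinite order and trivial torsion (this is forced by the Jacquet--Langlands identification of the isogeny class together with Cremona's tables). Fix the induced isomorphism $\Jac(X)\cong X$. Then every $\Q$-rational effective CM-divisor $P_d^{\times h_d}$ (Definition \ref{definition: CM-divisor}) has class $[P_d^{\times h_d}]=h_d[O]+m_d\cdot Q$ for a unique integer $m_d$; Example \ref{example: 142} records these $m_d$ for small $d$ by computing singular moduli via Schofer's formula, and in particular $m_{-3}=-1$ and $m_{-4}=-2$ (the discriminant $-3$ and $-4$ CM-points correspond to $-Q$ and $-2Q$).

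\textbf{Step 2 (the group $J_\CM(X)$).} By definition the natural homomorphism $\Div^0_\CM(X)\to\Jac(X)(\Q)=\Z\cdot Q$ has kernel $P_\CM(X)$, hence induces an injection $J_\CM(X)\hookrightarrow\Z\cdot Q$. The class of $P_{-3}-P_{-4}$ is sent to $m_{-3}Q-m_{-4}Q=Q$, a generator of $\Z\cdot Q$, so the image of $J_\CM(X)$ is all of $\Z\cdot Q$. Therefore $J_\CM(X)$ is free abelian of rank $1$ and is generated by the class of $P_{-3}-P_{-4}$.

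\textbf{Step 3 (Borcherds realization).} Since $D=142$ is even with odd part $D_0=71$, the lattice $L$ of Section \ref{section: construction} has level $4D_0=284$, so that machinery applies: as in the proof of Proposition \ref{proposition: admissible span} (verified computationally for $284$), $M^!(284)$ is spanned by admissible eta-products, and for every non-gap $j$ there is $f_j\in M^!(284)\cap\Z((q))$ with leading term $q^{-j}$, so any finitely supported integral principal part at the non-gaps is realized by some $f\in M^!(284)\cap\Z((q))$. Given $D=\sum_d n_d P_d^{\times h_d}\in P_\CM(X)$, I would solve for the principal part of $f$ so that Lemma \ref{lemma: divisor at infinity} gives $\div\psi_{F_f}=D$: ordering discriminants by absolute value, the coefficient $c_m$ with $4m$ the most negative discriminant occurring in $D$ contributes $c_m/e_{4m}$ to $P_{4m}$ and (for $r>1$) only to strictly coarser CM-divisors, so the linear system is upper-triangular and solvable with $f\in M^!(284)\cap\Z((q))$; since $\deg D=0$ one arranges $c_0(0)=0$, making $\psi_{F_f}$ of weight $0$. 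Under Assumption \ref{assumption: 1}, the parity criterion of Lemma \ref{lemma: genus 0} in its positive-genus form is then met, after modifying $f$ if necessary by a Borcherds form whose divisor is an even CM-divisor supported at the elliptic points of $X$ (these being CM-points of order $2$ and the $n_d$ being integers). Thus $\psi_{F_f}$ is a meromorphic function on $X$ with $\div\psi_{F_f}=D$. Conversely, the divisor of any weight-$0$ Borcherds form with trivial character is principal and supported on CM-points, hence lies in $P_\CM(X)$; so $P_\CM(X)$ is exactly the group of Borcherds divisors, which is the first assertion. Equivalently, one checks that $P_\CM(X)$ is freely generated by the divisors $R_d=P_d^{\times h_d}-(2h_d+m_d)P_{-3}+(h_d+m_d)P_{-4}$, $d\neq-3,-4$, and that each $R_d$ is realized by the construction above — just as $\psi_1$ in Example \ref{example: 142} realizes $R_{-8}=P_{-8}+P_{-4}-2P_{-3}$.

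The routine parts are Steps 1 and 2, which are immediate once Example \ref{example: 142} is available. The main obstacle is Step 3: forcing the character of the constructed Borcherds form to be trivial in the positive-genus situation, which is exactly where Assumption \ref{assumption: 1} enters, together with the bookkeeping of the stabilizer orders $e_d$ and of the gap set of $M^!(284)$ that is needed to guarantee the triangular system is solvable over $\Z$ for \emph{every} target divisor rather than only the ones appearing in the explicit examples.
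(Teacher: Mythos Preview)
Your Steps 1 and 2 are correct, and Step 2 is in fact cleaner than the paper's argument. The paper proceeds in the opposite order: it first carries out the Borcherds-form reduction (your Step 3) to show that every $P_d$ is linearly equivalent to an integer combination of $P_{-3}$ and $P_{-4}$, thereby proving that $J_\CM(X)$ is cyclic, and only afterwards invokes the elliptic-curve identification to see that $P_{-3}-P_{-4}\mapsto Q$ is non-torsion. You instead use the injection $J_\CM(X)\hookrightarrow\Jac(X)(\Q)=\Z\cdot Q$ directly, which immediately gives both freeness and the generator once $P_{-3}\mapsto -Q$, $P_{-4}\mapsto -2Q$ are known from Example \ref{example: 142}. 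Your framing of $P_\CM(X)$ as freely generated by the $R_d$ is a nice consequence of this.

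Step 3, however, is where the real work lies, and your triangular argument glosses over exactly what the paper verifies computationally. The space of principal parts realized by $M^!(284)$ has codimension $17$ (the gaps are $1,\dots,12,15,16,18,19,20$), so you cannot simply ``solve for the $c_m$'' and declare the result lies in $M^!(284)\cap\Z((q))$; the stabilizer factors $1/e_d$ in Lemma \ref{lemma: divisor at infinity} also obstruct integrality. The paper handles this by observing that for $|d_0|\ge 21$ the form $f_{|d_0|}$ exists (all such $j$ are non-gaps), giving a recursive reduction to the finitely many discriminants with $|d_0|\le 20$, and then checking those by hand---for instance exhibiting an explicit $f\in M^!(284)$ whose Borcherds form has divisor $P_{-8}+P_{-4}-2P_{-3}$. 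Your reduction to the generators $R_d$ is the right framing, but realizing each $R_d$ still requires precisely this finite verification; acknowledging the ``bookkeeping'' is not a substitute for doing it.
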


\begin{proof}
 The modular curve
  $X_0(284)$ has genus $34$. By  Lemma \ref{lemma: dimension}, when
  $n\ge 49$, we have 
  $$
    \dim M^!_n(284)=n-16.
  $$
  Let
  $$
    t=\frac{\eta(4\tau)^4\eta(142\tau)^2}{\eta(2\tau)^2\eta(284\tau)^4}
     =q^{-35}+2q^{-33}+q^{-31}+\cdots,
  $$
  which is a modular function on $\Gamma_0(284)$ having only a pole at
  $\infty$. If we can find a positive integer $n_0$ such that
  \begin{enumerate}
  \item[(i)] for each integer $k=0,\ldots,34$, there exists a linear
    combination of eta-products in $M^!(284)$ whose order of pole at
    $\infty$ is $n_0-k$, and
  \item[(ii)] the space $M^!_{n_0}(284)$ is spanned by eta-products,
  \end{enumerate}
  then by multiplying by $t$ suitably, we can show that all modular
  forms in $M^!(284)$ are linear combinations of eta-products. Indeed,
  our construction shows that $n_0$ can be as small as $55$. It is too
  complicated to exhibit a basis for $M^!_{55}(284)$ in terms of
  eta-products. Here we only remark that the $17$ gaps of $M^!(284)$ are
  $1,\ldots,12$, $15$, $16$, $18$, $19$ and $20$. Moreover, we find
  that for each non-gap integer $j$, there is a modular form $f_j$ in
  $M^!(284)\cap\Z((q))$ with a pole of order $j$ at $\infty$ and a
  leading coefficient $1$. Let $\psi_j$ be the Borcherds form
  associated to $f_j$. (Strictly speaking, because the vector-valued
  modular form constructed using $f_j$ may not have an integer
  constant term $c_0(0)$, we can only say that a suitable power of
  $\psi_j$ is a Borcherds form. Here we slightly abuse the
  terminology.)

  Now for a discriminant $d$ such that there are CM-points of
  discriminant $d$ on $X$, we let
  $$
    d_0=\begin{cases}
    d/4, &\text{if }d\equiv 0\mod 4,\\
    d, &\text{if }d\equiv 1\mod 4. \end{cases}
  $$
  When $|d_0|\ge 21$, $f_{|d_0|}$ exists and we have
  $$
    e_d\div\psi_{|d_0|}=P_d+\sum_{d'}c_{d'} P_{d'},
  $$
  where $e_d$ is the cardinality of the stabilizer subgroup of
  CM-points of discriminant $d$,  $d'$ runs over discriminants that
  are either odd integers with $|d'|<|d_0|$ or even integers with
  $|d'|<4|d_0|$. Therefore, recursively, we can show that every $P_d$
  is equivalent to a sum of $P_{d'}$, where $d'$ are either odd
  discriminants with $|d'|\le 21$ or even discriminants with $|d'|\le
  84$. Now for the finite list of discriminants with $|d'|\le 21$ for
  the case of odd $d'$ or $|d'|\le 84$ for the case of even $d'$, we
  check case by case that $P_{d'}\sim n_1P_{-4}+n_2P_{-3}$ for some
  integers $n_1$ and $n_2$. For instance, for $d'=-8$, we can
  construct a modular form in $M^!(284)$
  with Fourier expansion
  \begin{equation*}
  \begin{split}
   &-6q^{-71}-2q^{-48}+2q^{-40}-2q^{-36}+2q^{-24}-2q^{-20}+2q^{-16}
    -4q^{-15}-2q^{-12} \\ 
   &\qquad\qquad-2q^{-10}+2q^{-9}+2q^{-8}-6q^{-7}-2q^{-6}+2q^{-5}-2q^{-3}+2q^{-1}+\cdots,
  \end{split}
  \end{equation*}
  whose corresponding Borcherds form has a divisor $P_{-8}+P_{-4}-2P_{-3}$.
  This gives an algorithmic and recursive way to reduce a CM-divisor
  to a linear sum of $P_{-4}$ and $P_{-3}$. In particular, we find
  that $J_\CM(X)$ is generated by $P_{-4}-P_{-3}$.
  
  To see that $J_\CM(X)$ has rank $1$, i.e., $P_{-4}-P_{-3}$ is not a
  torsion, we recall that in Example \ref{example: 142}, we find that
  an equation for $X$ is $E_{\text{142A1}}:y^2+xy+y=x^3-x^2-12x+15$. Also, if we let
  $Q=(1,1)$ be the generator of $E_{\text{142A1}}(\Q)$, then
  $P_{-3}$ and $P_{-4}$ are $-Q$ and $-2Q$, respectively. As $n(-Q-2Q)$
  is not equal to $O$ for any nonzero integer $n$, there cannot exists a
  modular function on $X$ with a divisor equal to $n(P_{-3}-P_{-4})$ for some
  nonzero integer $n$. (Recall that a divisor $\sum n_i(P_i)$ of an
  elliptic curve is a principal divisor if and only if $\sum
  n_iP_i=O_E$.) This shows that our construction of Borcherds
  forms does generate all elements in $P_\CM(X)$ and that $J_\CM(X)$
  is a free abelian group of rank $1$ generated by the class of
  $P_{-3}-P_{-4}$.
\end{proof}

Of course, the fact that $J_\CM(X)$ is a free abelian
group of rank $1$ generated by some CM-divisor already follows from
Theorems A and C in \cite{Zhang}. Our approach gives an explicit way to
compute the heights of CM-divisors on $J(X)$. For instance,
for a discriminant $d<0$, we let $n$ be the integer such that
$P_d^{\times h_d}-h_dP_{-4}\sim n(P_{-3}-P_{-4})$, we have the following data.

$$ \extrarowheight3pt
\begin{array}{c|cccccccccc} \hline\hline
 d & -3 & -4 & -8 & -19 & -20 & -24 & -27 & -36 & -40 & -43 \\ \hline
h_d&  1 &  1 &  1 &  1  &  1  &  1  &  1  &  1  &  1  &  1 \\
 n & 1  & 0  & 2  &  3  &  4  &  5  &  8  &  7  &  6  & -1 \\
 \hline\hline
 d & -72 & -75 & -83 & -91 & -100 & -107 & -116 & -120 & -131 & -147
 \\ \hline
h_d&  1  &  2  &  3  &  2  &   1  &   3  &   3  &   2  &   5  &  2 \\ 
 n &  9  & 12  &  14 &  9  &  10  &  13  &   12  &   6  &  21  &  13 \\
 \hline\hline
 d & -148 & -152 & -171 & -179 & -180 & -187 & -196 & -200 & -216 & -219  
 \\ \hline
h_d&  1   &   3  &   4  &   5  &   2  &   2  &   2  &   3  &   3  &  4 \\
 n &  -2  &  11   &  19  &  19  &   8  &   2  &  11 &  16  &   8  &  15
 \\ \hline\hline
 d & -228 & -232 & -243 & -251 & -267 & -292 & -296 & -299 & \cdots
   & -568 \\ \hline
h_d&   2  &   1  &   3  &   7  &   2  &   2  &   5  &   8  & \cdots &
2 \\
 n &   5  &  -4  &   3  &  27  &   3  &  15  &  22  &  33  & \cdots & 20
 \\ \hline\hline
\end{array}
$$
Now recall that for a prime $p\neq 2,71$, the action of $T_p$ on
$\Div_\CM(X)$ is given by
$$
  T_p:\frac1{e_d}P_d\longmapsto\frac1{e_{p^2d}}P_{p^2d}+\frac1{e_d}
  \left(1+\JS dp\right)P_d,
$$
where $e_d$ is the cardinality of the stabilizer subgroup of a
CM-points of discriminant $d$ on $X$. We find $e_{-3}=3$, $e_{-4}=4$,
and
\begin{equation*}
\begin{split}
  T_3(P_{-3}-P_{-4})&=3P_{-27}+P_{-3}-4P_{-36}\sim-3(P_{-3}-P_{-4}), \\
  T_5(P_{-3}-P_{-4})&=3P_{-75}^{\times 2}-4P_{-100}^{\times 1}
   -2P_{-4}^{\times 1}\sim-4(P_{-3}-P_{-4}), \\
  T_7(P_{-3}-P_{-4})&=3P_{-147}^{\times 2}+2P_{-3}^{\times 1}
   -4P_{-196}^{\times 2}\sim-3(P_{-3}-P_{-4}).
\end{split}
\end{equation*}
We see that the eigenvalues for $T_3$, $T_5$, and $T_7$ are $-3$,
$-4$, and $-3$, respectively. This agrees with the Cremona's list of
Hecke eigenvalues for the cusp form corresponding to
$E_{\text{142A1}}$.

Now we compute heights of CM-divisors for several discriminants. We
first determine the canonical divisor class $\xi$ defined in
\eqref{equation: canonical divisor}. Since $X$ has genus
$1$, the class $[\Omega_X^1]$ is trivial. It has one elliptic point
$\tau_{-3}$ of order $3$, one elliptic point $\tau_{-4}$ of order $4$, and
three elliptic points $\tau_{-8}$, $\tau_{-568}$, and $\tau_{-568}'$ of
order $2$. Its volume is
$$
  \frac{142}{24}\left(1-\frac12\right)\left(1-\frac1{71}\right)
 =\frac{35}{12}.
$$
Thus,
$$
  \xi=\frac1{35}\left(8P_{-3}^{\times 1}+9P_{-4}^{\times 1}
  +6P_{-8}^{\times 1}+6P_{-568}^{\times 2}\right).
$$
According to the table above,
$$
  35\xi\sim 140P_{-3}-105P_{-4}.
$$
In other words, up to a $35$-torsion in
$J_\CM(X)\otimes\Q$, for a CM-divisor $P_d^{\times h_d}$,
we have
$$
  P_d^{\times h_d}-h_d\xi\sim(P_d^{\times h_d}-h_dP_{-4})-4h_d(P_{-3}-P_{-4}).
$$
Let $n$ be the integer such that the divisor on the right-hand side is
equal to $n(P_{-3}-P_{-4})$. The table above gives us
$$ \extrarowheight3pt
\begin{array}{c|ccccccc} \hline\hline
 d & -3 & -19 & -43 & -83 & -91 & -107 & -131 \\ \hline
 n & -3 &  -1 &  -5 &  2  &  1  &   1  &   1  \\ \hline\hline
 d & -179 & -187 & -219 & -251 & -267 & -299 & \\ \hline
 n &  -1  &  -6  &  -1  &  -1  &  -5  &   1  & \\ \hline\hline
\end{array}
$$
If we numerically approximate $L(E_{\mathrm{142A1}},d,1)$, we find
that
$$
  e_d^2\sqrt{|d|}L(E_{\mathrm{142A1}},d,1)\approx
  n^2\times 2.619470376720\ldots
$$
for the discriminants listed above, agreeing with Zhang's formula.
(The evaluation of the $L$-values is done by using Magma \cite{Magma}.)
\end{subsection}

\begin{subsection}{Example $X_0^{302}(1)/W_{302,1}$}
We next consider $X=X_0^{302}(1)/W_{302,1}$. We can show that admissible
eta-products span $M^!(604)$ as before. Also, assuming that Assumption
\ref{assumption: 1} holds for $X$, we find that every divisor in
$P_\CM(X)$ is the divisor of some Borcherds forms and that
$$
  J_\CM(X)\simeq(\Z/5)\times\Z\times\Z,
$$
where the torsion subgroup is generated by the divisor class of
$P_{-19}-P_{-20}$, and the free abelian subgroup is generated by the
divisor classes of $P_{-4}-P_{-20}$ and $P_{-11}-P_{-20}$. For those
who are interested, we remark that the Borcherds form with a divisor
$5(P_{-19}-P_{-20})$ is coming from the modular form in $M^!(604)$
with a Fourier expansion
\begin{equation*}
\begin{split}
  &-2q^{-144} - 3q^{-128} - 4q^{-124} - 4q^{-100} + 3q^{-84} - 2q^{-80} + 
    6q^{-76} - 6q^{-72} + 2q^{-68} \\
  &\qquad - 3q^{-64} + q^{-55} + 26q^{-52} - 
    4q^{-47} - q^{-44} + 40q^{-41} + 4q^{-40} + 6q^{-39} + q^{-36} \\
  &\qquad + 
    5q^{-32} + 6q^{-31} + 19q^{-30} + 12q^{-28} + 19q^{-27} - 12q^{-26} + 
    4q^{-25} + 28q^{-24} \\
  &\qquad + 5q^{-23} - 3q^{-21} + 3q^{-20} - q^{-19} + 
    6q^{-18} - 2q^{-17} + 5q^{-16} - 26q^{-15} \\
  &\qquad + 7q^{-14} + 14q^{-13} + 
    q^{-11} - 4q^{-10} + q^{-9} + 4q^{-8} + 12q^{-7} + 19q^{-6} -
    6q^{-5} \\
  &\qquad +
    3q^{-4} + 28q^{-3} - 6q^{-2} - 5q^{-1} + \cdots.
\end{split}
\end{equation*}

Let $k,m,n$
be the integers, $0\le k\le 4$, such that $P_d^{\times
  h_d}-h_dP_{-20}\sim
k(P_{-19}-P_{-20})+m(P_{-4}-P_{-20})+n(P_{-11}-P_{-20})$.
We have the following data.
$$ \extrarowheight3pt
\begin{array}{c|cccccc} \hline\hline
 d & -4 & -8 & -11 & -19 & -20 & -36 \\ \hline
h_d&  1 &  1 &  1  &  1  &  1  &  1  \\
k,m,n&0,1,0&3,1,2&0,0,1&1,0,0&0,0,0&2,0,2
\\ \hline\hline
 d & -40 & -43 & -59 & -68 & -72 & -84 \\ \hline
h_d&  1  &  1  &  3  &  2  &  1  &  2  \\
k,m,n&2,0,1&1,1,3&1,0,1&4,1,3&1,-1,-1&0,1,2
\\ \hline\hline
 d & -88 & -91 & -99 & -100 & -116 & -123 \\ \hline
h_d&  1  &  2  &  2  &   1  &   3  &   2  \\
k,m,n&1,0,2&4,0,-1&3,1,0&4,-1,0&3,1,3&1,2,5
\\ \hline\hline
 d & -136 & -139 & -148 & -152 & -155 & -168 \\ \hline
h_d&   2  &   3  &   1  &   3  &   4  &   2  \\
k,m,n&3,0,-1&4,1,5&2,-1,-3&3,0,1&0,1,4&3,1,1
\\ \hline\hline
 d & -171 & -180 & -187 & -195 & \cdots & -1208 \\ \hline
h_d&   4  &   2  &   2  &   4  & \cdots &   6  \\ 
k,m,n&3,1,5&4,1,5&0,-1,1&2,0,3&\cdots&4,0,4
\\ \hline\hline
\end{array}
$$
From the data, we find
\begin{equation*}
\begin{split}
  T_3(P_{-19}-P_{-20})&=P_{-171}^{\times 4}
    -P_{-180}^{\times 2}-2P_{-20}^{\times 1}\sim-(P_{-19}-P_{-20}), \\
  T_3(P_{-4}-P_{-20})&=4P_{-36}^{\times 1}
    -P_{-180}^{\times 2}-2P_{-20}^{\times 1} \\
  &\sim 4(P_{-19}-P_{20})-(P_{-4}-P_{-20})+3(P_{-11}-P_{-20}), \\
  T_3(P_{-11}-P_{-20})&=P_{-99}^{\times 2}
    +2P_{-11}^{\times 1}-P_{-180}^{\times 2}-2P_{-20}^{\times 1} \\
  &\sim 4(P_{-19}-P_{-20})-3(P_{-11}-P_{-20}).
\end{split}
\end{equation*}
Let $f_A$ and $f_C$ be the eigenforms on $X$ corresponding
to the newforms on $X_0(302)$ associated to the elliptic curves
$E_{\mathrm{302A1}}$ and $E_{\mathrm{302C1}}$ through the
Jacquet-Langlands correspondence. Let $a_{f_A}(p)$ and $a_{f_C}(p)$
denote the eigenvalues of $T_p$ for $f_A$ and $f_C$, respectively.
According to Cremona's table, we have $a_{f_A}(3)=-1$ and
$a_{f_C}(3)=-3$. Let $J_{f_A,\CM}$ and
$J_{f_C,\CM}$ be the maximal subgroups of $J_\CM(X)$ annihilated by
all $T_p-a_{f_A}(p)$ and all $T_p-a_{f_C}(p)$, respectively. We find
$J_{f_A,\CM}$ is generated by the divisor classes of
$$
  D_{A}=2P_{-4}+3P_{-11}-5P_{-20}, \qquad
  D_{A,0}=P_{-19}-P_{-20},
$$
where $D_{A,0}$ is a $5$-torsion, and $J_{f_C,\CM}$ is spanned by
$$
  D_C=P_{-11}-2P_{-19}+P_{-20}.
$$

We now determine the canonical divisor in Zhang's formula. In Example
\ref{example: 302}, we find that the unique genus-zero subfield of
degree $2$ in the function field of $X$ is generated by a function $x$
with divisor $\div x=P_{-43}+P_{-72}-P_{-19}-P_{-88}$. If we take a
standard Weierstrass equation $y^2=f(x)$ for $X$, then $\div
x/dy=P_{-19}+P_{-88}$. Thus, a representative for the class
$[\Omega_X^1]$ is $P_{-19}+P_{-88}$. The curve $X$ has one elliptic
point $\tau_{-4}$ of order $4$, and $7$ elliptic points $\tau_{-8}$ and
$\tau_{-1208}^{(1)},\ldots,\tau_{-1208}^{(6)}$ of order $2$. The volume is
$$
  \frac{302}{24}\left(1-\frac12\right)\left(1-\frac1{151}\right)=\frac{25}4.
$$
Therefore, the canonical divisor $\xi$ in Zhang's formula is
$$
  \xi=\frac{1}{25}\left(4P_{-19}+4P_{-88}+3P_{-4}+2P_{-8}
    +2P_{-1208}^{\times 6}\right).
$$
Up to a $25$-torsion in $J_\CM(X)\otimes\Q$, we have
$$
  \xi-P_{-20}\sim\frac1{25}\left(5P_{-4}+20P_{-11}+2P_{-19}-27P_{-20}\right)
 =\frac1{10}D_A+\frac{27}{25}D_{A,0}+\frac12D_C.
$$
Now for a discriminant $d$, let $k,m,n$ be the rational numbers such
that
$$
  P_d^{\times h_d}-h_dP_{-20}-h_d\left(\frac1{10}D_A+\frac{27}{25}D_{A,0}
  +\frac12D_C\right)=kD_{A,0}+\frac m{10}D_A+\frac n2D_C.
$$
We have
$$ \small\extrarowheight3pt
\begin{array}{c|cccccccccc} \hline\hline
 d & -11 & -19 & -43 & -59 & -91 & -123 & -139 & -155 & -187 & -195 \\
 \hline
m,n&-1,1 &-1,-1& 4,2 &-3,-1&-2,-4&  8,2 &  2,4 &  1,1 & -7,3 & -4,2
 \\ \hline\hline
\end{array}
$$
Approximating $L(E_{\mathrm{302A1}},d,1)$ and
$L(E_{\mathrm{302C1}},d,1)$ numerically, we find that for the
discriminants above, we have
$$
  \sqrt{|d|}L(E_{\mathrm{302A1}},d,1)\approx m^2\times
  1.225637922269744563\ldots,
$$
and
$$
  \sqrt{|d|}L(E_{\mathrm{302C1}},d,1)\approx n^2\times
  4.02890102461114010\ldots.
$$
\end{subsection}

\begin{subsection}{Example $X_0^{334}(1)/W_{334,1}$}
Consider $X=X_0^{334}(1)/W_{334,1}$, which has genus $2$. Let $g_1$ and $g_2$ be
eigenforms on $X$. The eigenforms on $\Gamma_0(334)$ corresponding to
$g_1$ and $g_2$ under the Jacquet-Langlands correspondence are
$$
  \widetilde g_1=q+q^2+\frac{-3+\sqrt5}2q^3+q^4+(-2-\sqrt5)q^5
     +\frac{-3+\sqrt5}2q^6-3q^7+q^8+\cdots
$$
and its Galois conjugate. We will determine the equation of $X$ and
compute the heights of several CM-divisors.

We can find modular forms $f_1,\ldots,f_4$ in $M^!(668)$ such that
their associated Borcherds forms $\psi_i$ have divisors
\begin{equation*}
\begin{split}
  \div\psi_{1}&=P_{-19}+P_{-36}-P_{-8}-P_{-27}, \\
  \div\psi_{2}&=P_{-88}+P_{-100}-P_{-8}-P_{-27}, \\
  \div\psi_{3}&=P_{-19}+P_{-27}+P_{-232}-P_{-11}-P_{-72}-P_{-100}, \\
  \div\psi_{4}&=P_{-484}^{\times 3}-P_{-11}-P_{-72}-P_{-100}.
\end{split}
\end{equation*}
Let $x$ and $y$ be the modular functions on $X$ such that
$\div x=\div\psi_{1}$, $x(\tau_{-88})=1$, $\div y=\div\psi_{3}$, and
$y(\tau_{-3})=3$. We find
$$ \small\extrarowheight3pt
\begin{array}{c|cccccccccccc} \hline\hline
 d & -3 & -4 & -8 & -11& -19 & -24& -27& -36& -72& -88&-100&-232 \\ \hline\hline
 x & 2/3&  6&\infty& 2 &  0  & 2 &\infty& 0 & 2/3&  1 & 1  &  6 \\
 y & 3 &-1/10& 2/3&\infty& 0 & -2 & 0  & 2 &\infty&10/3&\infty& 0 \\
\hline\hline
\end{array}
$$
and deduce that the relation between $x$ and $y$ is
$$
  (x-2)(3x-2)(x-1)y^2+(-2x^3+17x^2-26x+8)y-2x(x-6)=0.
$$
Setting
$$
  X=x, \qquad Y=2y(3x^3-11x^2+12x-4)-2x^3+17x^2-26x+8,
$$
we get the Weierstrass equation
$$
  Y^2=4X^6-44X^5+161X^4-292X^3+340X^2-224X+64.
$$

Furthermore, we find that $J_\CM(X)$ is a free abelian group of rank
$2$ generated by the divisor classes of
$$
  D_1=P_{-8}-P_{-3}, \qquad D_2=P_{-11}-P_{-3}.
$$
For a CM-divisor $P_d^{\times h_d}$, let $m$ and $n$ be the integers
such that $P_d^{\times h_d}-h_dP_{-3}\sim mD_1+nD_2$. We have
$$ \extrarowheight3pt
\begin{array}{c|ccccccccc} \hline\hline
 d & -3 & -4 & -8 & -11 & -19 & -24 & -27 & -36 & -56 \\ \hline
h_d& 1  &  1 &  1 &  1  &  1  &  1  &  1  &  1  &  2  \\
m,n& 0,0&15,-4&1,0& 0,1 &5,-1 & 3,0 & 2,1 &-2,2 &10,-1\\
\hline\hline
 d &-72 &-75 &-84 &-88 & -99 &-100 &-107 &-115 &-116 \\ \hline
h_d& 1  & 2 &  2 &  1 &  2  &  1  &  3  &  2  &  3   \\
m,n&3,1 &15,-2&9,-1&11,-2&17,-4&-8,3&0,2 &14,-3&10,0 \\ \hline\hline  
 d &-132&-147&-152&-168&-171&-179&-195&-196&-200 \\ \hline
h_d&  2 &  2 &  3 &  2 &  4 &  5 &  4 &  2 &  3  \\
m,n&3,1 &11,0&7,0 &-4,3&5,3 &10,2&5,2 &-3,3&18,-2\\ \hline\hline
 d &-203&-211&-216&-228&-232&-243&-244&\cdots&-1336 \\ \hline
h_d&  4 &  3 &  3 &  2 &  1 &  3 &  3 &\cdots&  6  \\
m,n&8,1 &2,2 &9,0 &12,-1&-12,5&21,-4&8,1&\cdots&12,4 \\ \hline\hline
\end{array}
$$
The action of the Hecke operator $T_3$ is
\begin{equation*}
\begin{split}
  T_3D_1&=2P_{-72}+2P_{-8}-3P_{-27}-P_{-3}\sim 2D_1-D_2, \\
  T_3D_2&=P_{-99}^{\times 2}+2P_{-11}-3P_{-27}-P_{-3}\sim
    11D_1-5D_2.
\end{split}
\end{equation*}
Thus, the $g_1$-isotypical and $g_2$-isotypical components of
$J_\CM(X)\otimes\R$ are spanned by
$$
  \frac{7+\sqrt 5}2D_1-D_2, \qquad \frac{7-\sqrt 5}2D_1-D_2,
$$
respectively.

We now verify Zhang's formula for the case of $X$.
The canonical divisor class in Zhang's formula is the class of
$$
  \xi=\frac{12}{83}\left(P_{-8}+P_{-27}+\frac23P_{-3}+\frac34P_{-4}
  +\frac12P_{-8}+\frac12P_{-1336}^{\times 6}\right).
$$
We find that, up to a $83$-torsion in $J_\CM(X)\otimes\Q$,
$$
  \xi-P_{-3}\sim 3P_{-8}-3P_{-3}=3D_1.
$$
Let
$$
  E_1=\frac1{\sqrt5}\left(\frac{7+\sqrt5}2D_1-D_2\right), \qquad
  E_2=-\frac1{\sqrt5}\left(\frac{7-\sqrt5}2D_1-D_2\right),
$$
and $s$ and $t$ be the real numbers in $\Q(\sqrt5)$ such that
$$
  P_d^{\times h_d}-h_dP_{-3}-3h_dD_1\sim sE_1+tE_2.
$$
We find
$$ \extrarowheight3pt
\begin{array}{c|cccccc} \hline\hline
 d & -3 & -11 & -19 & -107 & -115 \\ \hline
 s & -3 & (1-\sqrt5)/2 & (-3+\sqrt5)/2 & -2-\sqrt 5 & (-5+3\sqrt5)/2 
\\ \hline\hline
 d & -179 & -195 & -203 & -211 & -251 \\ \hline
 s & 2-\sqrt5 & -\sqrt5 & -(1+\sqrt5)/2 & -\sqrt5 & 2 \\ \hline\hline
\end{array}
$$
Approximating $L(g_1,d,1)$ numerically, we find that
$$
  e_d^2\sqrt{|d|}L(g_1,d,1)\approx s^2\times 0.2909633434\ldots.
$$
\end{subsection}
\end{section}

\begin{appendices}
\section{Tables for equations of hyperelliptic Shimura curves }
We list defining equations of hyperelliptic Shimura curves below. 
\renewcommand{\arraystretch}{1.5}
\tabcolsep=6pt                         
\begin{table}[htb]
\newcommand{\tabincell}[2]{\begin{tabular}{@{}#1@{}}#2\end{tabular}}
  \centering
\begin{tabular}{|c|c|c|}
 \hline
 \multirow{2}{*}{$X^{26}_0(1)$} &
 
 \multicolumn{2}{c|}{\multirow{1}{*}{\text}{ \tabincell{c}{ $y^2=-2x^6 + 19x^4 - 24x^2 - 169$ } }} \\
 \cline{2-3}
    & \multicolumn{2}{c|}{\tabincell{c}{$w_2(x,y)=(-x,-y)$, \\$w_{26}(x,y)=(x,-y)$.}}

\\\hline
 \multirow{2}{*}{$X^{35}_0(1)$} &
 
 \multicolumn{2}{c|}{\multirow{1}{*}{\text}{ \tabincell{c}{ $y^2=-(x^2+7)(7x^6+51x^4+197x^2+1)$ } }} \\
 \cline{2-3}
    & \multicolumn{2}{c|}{\tabincell{c}{$w_5(x,y)=(-x,-y)$, \\$w_{35}(x,y)=(x,-y)$.}}

\\\hline
 \multirow{2}{*}{$X^{38}_0(1)$} &
 
 \multicolumn{2}{c|}{\multirow{1}{*}{\text}{ \tabincell{c}{ $y^2=-16x^6 - 59x^4 - 82x^2 - 19$ } }} \\
 \cline{2-3}
    & \multicolumn{2}{c|}{\tabincell{c}{$w_2(x,y)=(-x,-y)$, \\$w_{38}(x,y)=(x,-y)$.}} 
\\\hline
\multirow{2}{*}{$X^{39}_0(1)$} &
 
 \multicolumn{2}{c|}{\multirow{1}{*}{\text}{ \tabincell{c}{ $y^2=-(x^4-x^3-x^2+x+1)(7x^4-23x^3+5x^2+23x+7)$ } }} \\
 \cline{2-3}
    & \multicolumn{2}{c|}{\tabincell{c}{$w_{13}(x,y)=(-\frac{1}{x},\frac{y}{x^4})$, \\$w_{39}(x,y)=(x,-y)$.}}

\\\hline
 \multirow{2}{*}{$X^{51}_0(1)$} &
 
 \multicolumn{2}{c|}{\multirow{1}{*}{\text}{ \tabincell{c}{ $y^2=-(x^2+3)(243x^6+235x^4-31x^2+1)$ } }} \\
 \cline{2-3}
    & \multicolumn{2}{c|}{\tabincell{c}{$w_3(x,y)=(-x,y)$, \\$w_{51}(x,y)=(x,-y)$.}} 

\\\hline
 \multirow{2}{*}{$X^{55}_0(1)$} &
 
 \multicolumn{2}{c|}{\multirow{1}{*}{\text}{ \tabincell{c}{ $y^2=-(x^4-x^3+x^2+x+1)(3x^4+x^3-5x^2-x+3)$ } }} \\
 \cline{2-3}
    & \multicolumn{2}{c|}{\tabincell{c}{$w_5(x,y)=(-\frac{1}{x},\frac{y}{x^4})$, \\$w_{55}(x,y)=(x,-y)$.}} 

\\\hline
\multirow{2}{*}{$X^{57}_0(1)$} &
 
 \multicolumn{2}{c|}{\multirow{1}{*}{\text}{ \tabincell{c}{ $y^2=(3s+1)(3s^3+11s^2+17s+1)$\\
 $x^2=-4s^2+2s-1$ } }} \\
 \cline{2-3}
    & \multicolumn{2}{c|}{\tabincell{c}{$w_{19}(s,x,y)=(s,x,-y)$, \\$w_{57}(s,x,y)=(s,-x,y)$.}} 

\\\hline

 \multirow{2}{*}{$X^{58}_0(1)$} &
 
 \multicolumn{2}{c|}{\multirow{1}{*}{\text}{ \tabincell{c}{ $y^2=-2x^6 - 78x^4 - 862x^2 - 1682$  } }} \\
 \cline{2-3}
    & \multicolumn{2}{c|}{\tabincell{c}{$w_2(x,y)=(-x,-y)$, \\$w_{29}(x,y)=(x,-y)$.}}

\\\hline

\end{tabular}
\bigskip
\caption{Equations of level one}
\end{table}

\renewcommand{\arraystretch}{1.5}
\tabcolsep=6pt                         
\begin{table}[htb]
\newcommand{\tabincell}[2]{\begin{tabular}{@{}#1@{}}#2\end{tabular}}
  \centering
\begin{tabular}{|c|c|c|}
 \hline
 \multirow{2}{*}{$X^{62}_0(1)$} &
 
 \multicolumn{2}{c|}{\multirow{1}{*}{\text}{ \tabincell{c}{ $y^2=-64x^8 - 99x^6 - 90x^4 - 43x^2 - 8$  } }} \\
 \cline{2-3}
    & \multicolumn{2}{c|}{\tabincell{c}{$w_2(x,y)=(-x,y)$, \\ $w_{62}(x,y)=(x,-y)$.}} 

\\\hline
 \multirow{2}{*}{$X^{69}_0(1)$} &
 
 \multicolumn{2}{c|}{\multirow{1}{*}{\text}{ \tabincell{c}{ $y^2=-243x^8+1268x^6-666x^4-2268x^2-2187$ } }} \\
 \cline{2-3}
    & \multicolumn{2}{c|}{\tabincell{c}{$w_3(x,y)=(-x,y)$, \\$w_{69}(x,y)=(x,-y)$.}} 
\\\hline
 \multirow{2}{*}{$X^{74}_0(1)$} &
 
 \multicolumn{2}{c|}{\multirow{1}{*}{\text}{ \tabincell{c}{ $y^2=-2x^{10} + 47x^8 - 328x^6 + 946x^4 - 4158x^2$\\ $- 1369$   } }} \\
 \cline{2-3}
    & \multicolumn{2}{c|}{\tabincell{c}{$w_2(x,y)=(-x,-y)$, \\$w_{74}(x,y)=(x,-y)$.}} 

\\\hline

\multirow{2}{*}{$X^{82}_0(1)$} &
 
 \multicolumn{2}{c|}{\multirow{1}{*}{\text}{ \tabincell{c}{ $y^2=4s^4 + 4s^3 + s^2 - 2s + 1$\\ $x^2=-19s^2 + 18s - 11$ } }} \\
 \cline{2-3}
    & \multicolumn{2}{c|}{\tabincell{c}{$w_2(x,y)=(-x,-y)$, \\$w_{41}(x,y)=(x,-y)$.}}

\\\hline
 \multirow{2}{*}{$X^{86}_0(1)$} &
 
 \multicolumn{2}{c|}{\multirow{1}{*}{\text}{ \tabincell{c}{ $y^2=-16x^{10} + 245x^8 - 756x^6 - 1506x^4 - 740x^2 - 43$   } }} \\
 \cline{2-3}
    & \multicolumn{2}{c|}{\tabincell{c}{$w_2(x,y)=(-x,-y)$, \\$w_{86}(x,y)=(x,-y)$.}} 

\\\hline
 \multirow{2}{*}{$X^{87}_0(1)$} &
 
 \multicolumn{2}{c|}{\multirow{1}{*}{\text}{ \tabincell{c}{ $y^2=-(x^6-7x^4+43x^2+27)(243x^6+523x^4+369x^2+81)$ } }} \\
 \cline{2-3}
    & \multicolumn{2}{c|}{\tabincell{c}{$w_3(x,y)=(-x,y)$, \\$w_{87}(x,y)=(x,-y)$.}} 

\\\hline
 \multirow{2}{*}{$X^{93}_0(1)$} &
 
 \multicolumn{2}{c|}{\multirow{1}{*}{\text}{ \tabincell{c}{ $y^2=(3s^3-7s^2-3t-1)(3s^3+s^2-3s-9)$\\ $x^2=-4s^2-6s-9$ } }} \\
 \cline{2-3}
    & \multicolumn{2}{c|}{\tabincell{c}{$w_3(s,x,y)=(s,-x,-y)$, \\$w_{31}(s,x,y)=(s,x,-y)$.}}

\\\hline

\multirow{2}{*}{$X^{94}_0(1)$} &
 
 \multicolumn{2}{c|}{\multirow{1}{*}{\text}{ \tabincell{c}{ $y^2=-8x^8 + 69x^6 - 234x^4 + 381x^2 - 256$  } }} \\
 \cline{2-3}
    & \multicolumn{2}{c|}{\tabincell{c}{$w_2(x,y)=(-x,y)$, \\$w_{94}(x,y)=(x,-y)$.}} 

\\\hline

\end{tabular}
\bigskip
\caption{Equations of level one}
\end{table}

\renewcommand{\arraystretch}{1.5}
\tabcolsep=6pt                         
\begin{table}[htb]
\newcommand{\tabincell}[2]{\begin{tabular}{@{}#1@{}}#2\end{tabular}}
  \centering
\begin{tabular}{|c|c|c|}
 \hline
 \multirow{2}{*}{$X^{95}_0(1)$} &
 
 \multicolumn{2}{c|}{\multirow{1}{*}{\text}{ \tabincell{c}{ $y^2=-(x^8+x^7-x^6-4x^5+x^4+4x^3-x^2-x+1)$\\$\times(7x^8+19x^7+21x^6-13x^4+21x^2-19x+7)$ } }} \\
 \cline{2-3}
    & \multicolumn{2}{c|}{\tabincell{c}{$w_5(x,y)=(-\frac{1}{x},\frac{y}{x^6})$, \\$w_{95}(x,y)=(x,-y)$.}} 

\\\hline

\multirow{2}{*}{$X^{111}_0(1)$} &
 
 \multicolumn{2}{c|}{\multirow{1}{*}{\text}{ \tabincell{c}{ $y^2=-(19x^8-44x^7-16x^6+55x^5+37x^4-55x^3-16x^2+44x+19)$\\ $\times (x^8-3x^5-x^4+3x^3+1)$ } }} \\
 \cline{2-3}
    & \multicolumn{2}{c|}{\tabincell{c}{$w_{37}(x,y)=(-\frac{1}{x},\frac{y}{x^8})$, \\$w_{111}(x,y)=(x,-y)$.}} 

\\\hline
\multirow{2}{*}{$X^{119}_0(1)$} &
 
 \multicolumn{2}{c|}{\multirow{1}{*}{\text}{ \tabincell{c}{$y^2=-(7x^{10}-171x^8+758x^6+3418x^4+4851x^2+2401)$\\ $\times (x^{10}+3x^{8}+26x^6+278x^4+373x^2+343)$ } }} \\
 \cline{2-3}
    & \multicolumn{2}{c|}{\tabincell{c}{$w_{7}(x,y)=(-x,y)$, \\$w_{119}(x,y)=(x,-y)$.}} 

\\\hline

 \multirow{2}{*}{$X^{134}_0(1)$} &
 
 \multicolumn{2}{c|}{\multirow{1}{*}{\text}{ \tabincell{c}{ $y^2=
-19x^{14} - 8x^{13} + 178x^{12} - 138x^{11} - 625x^{10} + 940x^9 + 383x^8   $\\$- 1486x^7+ 383x^6 + 940x^5 - 625x^4 - 138x^3 + 178x^2$\\$ - 8x - 19$  } }} \\
 \cline{2-3}
    & \multicolumn{2}{c|}{\tabincell{c}{$w_2(x,y)=(\frac{1}{x},\frac{y}{x^7})$, \\$w_{134}(x,y)=(x,-y)$.}}

 \\\hline
 \multirow{2}{*}{$X^{146}_0(1)$} &
 
 \multicolumn{2}{c|}{\multirow{1}{*}{\text}{ \tabincell{c}{ $y^2=-11x^{16} + 82x^{15} - 221x^{14} + 214x^{13} + 133x^{12} - 360x^{11} - 170x^{10}$\\$ + 
        676x^9 - 150x^8 - 676x^7 - 170x^6 + 360x^5+ 133x^4$\\$  - 214x^3 - 221x^2 - 82x - 11$ } }} \\
 \cline{2-3}
    & \multicolumn{2}{c|}{\tabincell{c}{$w_{73}(x,y)=(-\frac{1}{x},\frac{y}{x^8})$, \\$w_{146}(x,y)=(x,-y)$.}} 
 \\\hline

\multirow{2}{*}{$X^{159}_0(1)$} &
 
 \multicolumn{2}{c|}{\multirow{1}{*}{\text}{ \tabincell{c}{ $y^2=-(81x^{10}+207x^8+874x^6-130x^4-11x^2+3)$
 \\$\times(2187x^{10}+8389x^8+8878x^6+42x^4-41x^2+1)$ } }} \\
 \cline{2-3}
    & \multicolumn{2}{c|}{\tabincell{c}{$w_{3}(x,y)=(-x,y)$, \\$w_{159}(x,y)=(x,-y)$.}} 

\\\hline

\end{tabular}
\bigskip
\caption{Equations of level one}
\end{table}

\newpage
\renewcommand{\arraystretch}{1.5}
\tabcolsep=6pt                         
\begin{table}[htb]
\newcommand{\tabincell}[2]{\begin{tabular}{@{}#1@{}}#2\end{tabular}}
  \centering
\begin{tabular}{|c|c|c|}
 \hline

\multirow{2}{*}{$X^{194}_0(1)$} &
 
 \multicolumn{2}{c|}{\multirow{1}{*}{\text}{ \tabincell{c}{ $y^2=-19x^{20} - 92x^{19} - 286x^{18} - 592x^{17} - 921x^{16} - 1016x^{15} - 872x^{14}$\\$ + 
    460x^{13} + 1545x^{12} + 1752x^{11} + 34x^{10} - 1752x^9 + 1545x^8$\\$ - 460x^7 -
    872x^6 + 1016x^5 - 921x^4 + 592x^3 - 286x^2$\\$ + 92x - 19$} }} \\
 \cline{2-3}
    & \multicolumn{2}{c|}{\tabincell{c}{$w_{97}(x,y)=\left(-\frac{1}{x},-\frac{y}{x^{10}}\right)$,\\$w_{194}(x,y)=\left(x,-y\right)$.}} \\

 \hline
 \multirow{2}{*}{$X^{206}_0(1)$} &
 
 \multicolumn{2}{c|}{\multirow{1}{*}{\text}{\tabincell{c}{ $y^2= -8x^{20} + 13x^{18} + 42x^{16} + 331x^{14} + 220x^{12}- 733x^{10}    $\\$ 
 - 6646x^8 - 
        19883x^6 - 28840x^4 - 18224x^2 - 4096  $}}} \\
 \cline{2-3}
    & \multicolumn{2}{c|}{\tabincell{c}{$w_2(x,y)=(-x,y)$, \\$w_{206}(x,y)=(x,-y)$.}} \\
 
 \hline
 
 \end{tabular} 
 \bigskip
 \caption{Equations of level one}
\end{table}

\renewcommand{\arraystretch}{1.5}
\tabcolsep=6pt                         
\begin{table}[htb]
\newcommand{\tabincell}[2]{\begin{tabular}{@{}#1@{}}#2\end{tabular}}
  \centering

\begin{tabular}{|c|c|c|}
 \hline
 \multirow{2}{*}{$X^{6}_0(11)$} &
 
 \multicolumn{2}{c|}{\multirow{1}{*}{\text}{ \tabincell{c}{ $y^2=-19x^8 - 166x^7 - 439x^6 - 166x^5 + 612x^4$\\ $+ 166x^3 - 439x^2 + 166x -
    19$  } }} \\
 \cline{2-3}
    & \multicolumn{2}{c|}{\tabincell{l}{$w_{2}(x,y)=\left(\frac{x+1}{x-1},-\frac{4y}{(x-1)^4}\right)$, \\$w_3(x,y)=(-\frac{1}{x},-\frac{y}{x^4})$,\\ $w_{66}(x,y)=\left(x,-y\right).$ }}

 \\\hline
 \multirow{2}{*}{$X^6_0(17)$} &
 
 \multicolumn{2}{c|}{\multirow{1}{*}{\text}{ \tabincell{c}{ $z^2=-3x^2-16$\\ $y^2=17x^4-10x^2+9$ } }} \\
 \cline{2-3}
    & \multicolumn{2}{c|}{\tabincell{l}{$w_2(x,y,z)=\left(-x,y,z\right),$ \\$w_{3}(x,y,z)=\left(x,-y,-z\right),$\\ $w_{34}(x,y,z)=\left(x,-y,z\right).$ }}

\\\hline
\multirow{2}{*}{$X^{6}_0(19)$} &
 
 \multicolumn{2}{c|}{\multirow{1}{*}{\text}{ \tabincell{c}{ $y^2=-19x^8 + 210x^6 - 625x^4 + 210x^2 - 19$  } }} \\
 \cline{2-3}
    & \multicolumn{2}{c|}{\tabincell{l}{$w_2(x,y)=\left(-\frac{1}{x},-\frac{y}{x^4}\right)$, \\$w_{3}(x,y)=\left(\frac{1}{x},\frac{y}{x^4}\right)$,\\ $w_{114}(x,y)=\left(x,-y\right).$ }}

 \\\hline
 \multirow{2}{*}{$X^6_0(29)$} &
 
 \multicolumn{2}{c|}{\multirow{1}{*}{\text}{ \tabincell{c}{ $y^2=-64x^{12} + 813x^{10} - 3066x^8 + 4597x^6 - 12264x^4  $ \\ $+ 13008x^2 - 4096  $ } }} \\
 \cline{2-3}
    & \multicolumn{2}{c|}{\tabincell{l}{$w_2(x,y)=\left(-x,y\right)$, \\$w_{3}(x,y)=\left(-\frac{2}{x},\frac{8y}{x^6}\right),$ \\ $w_{174}(x,y)=\left(x,-y\right)$ }}

\\\hline
 \multirow{2}{*}{$X^{6}_0(31)$} &
 
 \multicolumn{2}{c|}{\multirow{1}{*}{\text}{ \tabincell{c}{ $y^2=-243x^{12} + 11882x^{10} - 177701x^8 + 803948x^6  $ \\ $ - 1599309x^4 + 962442x^2 - 
    177147  $ } }} \\
 \cline{2-3}
    & \multicolumn{2}{c|}{\tabincell{l}{$w_2(x,y)=\left(\frac{3}{x},-\frac{27y}{x^6}\right)$, \\$w_{3}(x,y)=\left(-x,y\right),$ \\ $w_{186}(x,y)=\left(x,-y\right).$ }}
 
\\\hline

\multirow{2}{*}{$X^{6}_0(37)$} &
 
 \multicolumn{2}{c|}{\multirow{1}{*}{\text}{ \tabincell{c}{ $y^2=-4096x^{12} - 18480x^{10} - 40200x^8 - 51595x^6 $ \\ $ - 40200x^4 - 18480x^2 - 4096 $} }} \\
 \cline{2-3}
    & \multicolumn{2}{c|}{\tabincell{l}{$w_2(x,y)=\left(-x,y\right)$, \\$w_{3}(x,y)=\left(\frac{1}{x},\frac{y}{x^6}\right),$\\ $w_{222}(x,y)=\left(x,-y\right).$\  }}

  \\\hline

 \end{tabular} 
 \bigskip
 \caption{Equations of level greater than one}
\end{table}
\renewcommand{\arraystretch}{1.5}
\tabcolsep=6pt                         
\begin{table}[htb]
\newcommand{\tabincell}[2]{\begin{tabular}{@{}#1@{}}#2\end{tabular}}
  \centering

\begin{tabular}{|c|c|c|}
 \hline
  \multirow{2}{*}{$X^{10}_0(11)$} &
 
 \multicolumn{2}{c|}{\multirow{1}{*}{\text}{ \tabincell{c}{ $y^2=-8x^{12} - 35x^{10} + 30x^8 + 277x^6 + 120x^4   $ \\ $ - 560x^2 - 512$ } }} \\
 \cline{2-3}
    & \multicolumn{2}{c|}{\tabincell{l}{$w_{10}(x,y)=\left(-\frac{2}{x},-\frac{8y}{x^6}\right)$, \\$w_{22}(x,y)=\left(\frac{2}{x},\frac{8y}{x^6}\right)$,\\ $w_{110}(x,y)=\left(x,-y\right).$\  }} 
    \\\hline

  \multirow{2}{*}{$X^{10}_0(13)$} &
 
 \multicolumn{2}{c|}{\multirow{1}{*}{\text}{ \tabincell{c}{ $z^2=-2x^2-25$ \\ $y^2=5x^4-74x^2
 +325$ } }} \\
 \cline{2-3}
    & \multicolumn{2}{c|}{\tabincell{l}{$w_2(x,y,z)=\left(x,-y,-z\right)$, \\$w_{5}(x,y,z)=\left(-x,-y,-z\right),$\\ $w_{65}(x,y,z)=\left(x,-y,z\right)$. }}
 
\\\hline
 \multirow{2}{*}{$X^{10}_0(19)$} &
 
 \multicolumn{2}{c|}{\multirow{1}{*}{\text}{ \tabincell{c}{ $z^2=5x^2-32 $ \\$y^2=-8x^6+57x^4-40x^2+16$} }} \\
 \cline{2-3}
    & \multicolumn{2}{c|}{\tabincell{l}{$w_2(x,y,z)=\left(-x,y,z\right)$, \\$w_{5}(x,y,z)=\left(x,-y,-z\right),$\\ $w_{38}(x,y,z)=\left(x,-y,z\right).$\  }} 
 
 \\\hline
  
  \multirow{2}{*}{$X^{10}_0(23)$} &
 
 \multicolumn{2}{c|}{\multirow{1}{*}{\text}{ \tabincell{c}{ $y^2=-43x^{20} + 318x^{19} - 1071x^{18} + 3014x^{17} - 10540x^{16}  $\\ $+ 28266x^{15} - 
    72217x^{14} + 81478x^{13} - 62765x^{12} - 68732x^{11} $\\ $+ 18840x^{10} + 68732x^9 -
    62765x^8 - 81478x^7 - 72217x^6 $\\ $- 28266x^5 - 10540x^4 - 3014x^3 - 
    1071x^2 - 318x - 43$  } }} \\
 \cline{2-3}
    & \multicolumn{2}{c|}{\tabincell{l}{$w_2(x,y)=\left(\frac{2x+1}{x-2},-\frac{5^5y}{(x-2)^{10}}\right)$, \\$w_{5}(x,y)=\left(-\frac{1}{x},-\frac{y}{x^{10}}\right)$,\\ $w_{230}(x,y)=\left(x,-y\right).$ }}
\\\hline

 \multirow{2}{*}{$X^{14}_0(3)$} &
 
 \multicolumn{2}{c|}{\multirow{1}{*}{\text}{ \tabincell{c}{ $z^2=-9x^2-2$ \\ $ y^2=-7x^4+22x^2+1  $ } }} \\
 \cline{2-3}
    & \multicolumn{2}{c|}{\tabincell{l}{$w_2(x,y,z)=\left(-x,y,z\right)$, \\$w_{3}(x,y,z)=\left(x,-y,-z\right),$\\ $w_{14}(x,y,z)=\left(x,-y,z\right).$\  }} 
 
  \\\hline
 \multirow{2}{*}{$X^{14}_0(5)$} &
 
 \multicolumn{2}{c|}{\multirow{1}{*}{\text}{ \tabincell{c}{ $y^2=-23x^8 - 180x^7 - 358x^6 - 168x^5 - 677x^4    $ \\ $+ 168x^3 - 358x^2 + 180x - 23 $ } }} \\
 \cline{2-3}
    & \multicolumn{2}{c|}{\tabincell{l}{$w_2(x,y)=\left(-\frac{1}{x},\frac{y}{x^4}\right)$, \\$w_{14}(x,y)=\left(x,-y\right),$ \\ $w_{35}(x,y)=\left(\frac{x+2}{2x-1},-\frac{25y}{(2x-1)^4}\right).$ }}

\\\hline

 \end{tabular} 
 \bigskip
 \caption{Equations of level greater than one}
\end{table}

\newpage
\renewcommand{\arraystretch}{1.5}
\tabcolsep=6pt                         
\begin{table}[htb]
\newcommand{\tabincell}[2]{\begin{tabular}{@{}#1@{}}#2\end{tabular}}
  \centering

\begin{tabular}{|c|c|c|}
 \hline

 \multirow{2}{*}{$X^{15}_0(2)$} &
 
 \multicolumn{2}{c|}{\multirow{1}{*}{\text}{ \tabincell{c}{ $y^2=-(x^2+3)(3x^2+4)(x^4-x^2+4)$ } }} \\
 \cline{2-3}
    &
      \multicolumn{2}{c|}{\tabincell{l}{$w_2(x,y)=\left(\frac2x,-\frac{4y}{x^4}\right)$, \\
      $w_{3}(x,y)=\left(-x,y\right),$\\ $w_{5}(x,y)=\left(-x,-y\right).$\  }} 
 
  \\\hline
 
 \multirow{2}{*}{$X^{15}_0(4)$} &
 
 \multicolumn{2}{c|}{\multirow{1}{*}{\text}{ \tabincell{c}{
  $z^2=-3x^2-1$ \\
  $y^2=-(4x^2-x+1)(4x^2+x+1)(5x^2+3)$ } }} \\
 \cline{2-3}
    &
      \multicolumn{2}{c|}{\tabincell{l}{
   $w_4(x,y,z)=\left(-x,-y,-z\right)$, \\
   $w_{3}(x,y,z)=\left(x,y,-z\right),$\\
   $w_{5}(x,y)=\left(x,-y,-z\right).$  }} 
 
  \\\hline

 \multirow{2}{*}{$X^{21}_0(2)$} &
 
 \multicolumn{2}{c|}{\multirow{1}{*}{\text}{ \tabincell{c}{
  $z^2=-x^2-3$ \\
  $y^2=-(3x-1)(3x+1)(x^2+7)(x^2+3)$ } }} \\
 \cline{2-3}
    &
      \multicolumn{2}{c|}{\tabincell{l}{
   $w_2(x,y,z)=\left(-x,-y,-z\right)$, \\
   $w_{3}(x,y,z)=\left(x,y,-z\right),$\\
   $w_{7}(x,y)=\left(x,-y,z\right).$  }} 
 
  \\\hline
 \multirow{2}{*}{$X^{22}_0(3)$} &
 
 \multicolumn{2}{c|}{\multirow{1}{*}{\text}{ \tabincell{c}{ $y^2=-27x^8 - 308x^6 - 2146x^4 - 308x^2 - 27  $ } }} \\
 \cline{2-3}
    & \multicolumn{2}{c|}{\tabincell{l}{$w_2(x,y)=\left(-\frac{1}{x},-\frac{y}{x^4}\right)$, \\$w_{3}(x,y)=\left(-x,y\right),$ \\ $w_{66}(x,y)=\left(x,-y\right).$ }}
 
\\\hline

 \multirow{2}{*}{$X^{22}_0(5)$} &
 
 \multicolumn{2}{c|}{\multirow{1}{*}{\text}{ \tabincell{c}{ $y^2=-11x^{12} - 80x^{10} - 240x^8 - 362x^6 - 240x^4 - 80x^2 - 11 $} }} \\
 \cline{2-3}
    & \multicolumn{2}{c|}{\tabincell{l}{$w_2(x,y)=\left(\frac{1}{x},\frac{y}{x^6}\right)$, \\$w_{5}(x,y)=\left(-\frac{1}{x},-\frac{y}{x^6}\right),$\\ $w_{110}(x,y)=\left(x,-y\right).$\  }}

 \\\hline
\multirow{2}{*}{$X^{26}_0(3)$} &
 
 \multicolumn{2}{c|}{\multirow{1}{*}{\text}{ \tabincell{c}{ $z^2=-8x^2-3$\\ $ y^2=x^6-2x^4+9x^2+8$ } }} \\
 \cline{2-3}
    & \multicolumn{2}{c|}{\tabincell{l}{$w_2(x,y,z)=\left(-x,-y,-z\right)$, \\$w_{3}(x,y,z)=\left(x,-y,-z\right),$\\ $w_{26}(x,y,z)=\left(x,-y,z\right).$\  }} \\\hline

 \multirow{2}{*}{$X^{39}_0(2)$} &
 
 \multicolumn{2}{c|}{\multirow{1}{*}{\text}{ \tabincell{c}{
  $y^2=-(x^8+11x^7+52x^6+140x^5+243x^4+280x^3+208x^2+88x+16)$ \\
  $\qquad(7x^4+24x^3+32x^2+24x+16)(x^4+3x^3+8x^2+12x+7)$ } }} \\
 \cline{2-3}
    &
      \multicolumn{2}{c|}{\tabincell{l}{
   $w_4(x,y,z)=\left(-x,-y,-z\right)$, \\
   $w_{3}(x,y,z)=\left(x,y,-z\right),$\\
   $w_{5}(x,y)=\left(x,-y,-z\right).$  }} 
 
  \\\hline

\end{tabular} 
\bigskip
\caption{Equations of level greater than one}
\end{table}

\section{Tables of coordinates of CM-points on Shimura curves}
In this appendix, we list coordinates of rational CM-points on Shimura
curves $X_0^D(N)/W_{D,N}$ and also the $x$-coordinates of CM-points in
the equations of $X_0^D(N)$. (However, we do not claim that the list
of rational CM-points on $X_0^D(N)/W_{D,N}$ is complete.) In addition,
for some larger $D$, we also give several CM-points of degree $2$ that
are used in our determination of the equations of Shimura curves.

\renewcommand{\arraystretch}{1.5}
\tabcolsep=10pt                         
\begin{table}[H]
\begin{tabular}{|c|c|c|}
\hline
CM-point& ${X_{0}^{26}(1)/W_{26,1}}$ & ${X_{0}^{26}(1)/w_{26}}$ \\
         discriminant & $s$ & $x\ (s=x^2)$ \\\hline
${-8}$ & $\infty$ & $\infty$  \\ 
${-11}$ & $1$ & $\pm 1$  \\ 
${-19}$ & $9$ & $\pm 3$  \\ 
${-20}$ & $5$ & $\pm \sqrt{5}$ \\
${-24}$ & $-3$ & $\pm \sqrt{-3}$  \\ 
${-52}$ & $0$ & $0$  \\
${-67}$ & $81/25$ & $\pm {9}/{5}$  \\ 
${-91}$ & $-9/7$ & $\pm {3/\sqrt{-7}}$  \\

${-148}$ & $333/25$ & $\pm {3\sqrt{37}}/{5}$  \\ 
${-163}$ & $4761/1225$ & $\pm {69}/{35}$  \\
${-232}$ & $-225/98$ & $\pm {15\sqrt{-2}}/{14}$  \\ 
${-312}$ & $-75/2$ & $\pm {5\sqrt{-6}}/{2}$ \\ 
${-403}$ & $-1521/775$ & $\pm {39\sqrt{-31}}/{155}$  \\ 
${-520}$ & $1521/245$ & $\pm39\sqrt{5}/35$\\ \hline
\end{tabular}
\caption{CM-values of $X^{26}_0(1)$}
\bigskip

\begin{tabular}{|c|c|c|}
\hline
CM-point& ${X_{0}^{35}(1)/W_{35,1}}$ & ${X_{0}^{35}(1)/w_{35}}$  \\ 
         discriminant & $s$ & $x\ (s=x^2)$  \\ \hline
${-7}$ & $\infty$ & $\infty$  \\ 
${-8}$ & $1$ & $\pm 1$   \\ 
${-15}$ & $-3$ & $\pm \sqrt{-3}$  \\ 
${-28}$ & $0$ & $0$ \\ 
${-35}$ & $-7$ & $\pm \sqrt{-7}$  \\ 
${-43}$ & $9$ & $\pm 3$  \\ 
${-60}$ & $-{1}/{3}$ & $\pm {\sqrt{-3}}/{3}$ \\ 
${-67}$ & ${1}/{9}$ & $ \pm{1}/{3}$  \\ 

${-163}$ & ${81}/{25}$ & $ \pm{9}/{5}$  \\
${-235}$ & $-{47}/{9}$ & $ \pm{\sqrt{-47}}/{3}$  \\

${-280}$ & $-{9}/{7}$ & $ \pm{3\sqrt{-7}}/{7}$ \\ 
${-315}$ & $-{1}/{7}$ & $ \pm {\sqrt{-7}}/{7}$ \\ 
${-427}$ & $-{175}/{9}$ & $ \pm{{5\sqrt{
-7}}}/{3}$  \\ 
${-595}$ & ${17}/{9}$ & $ \pm{\sqrt{17}}/{3}$  \\
${-1435}$ & $-{369}/{7}$ & $ \pm{3\sqrt{-41}}/{7}$ \\ \hline
\end{tabular}
\caption{CM-values of $X^{35}_0(1)$}
\end{table}

\begin{table}
\begin{tabular}{|c|c|c|}
\hline
CM-point& ${X_{0}^{38}(1)/W_{38,1}}$ & ${X_{0}^{38}(1)/w_{38}}$  \\ 
         discriminant & $s$ & $x\ (s=x^2)$  \\ \hline
${-4}$ & $\infty$ & $\infty$  \\ 
${-11}$ & $1$ & $\pm 1$  \\ 
${-19}$ & $0$ & $0$ \\ 
${-20}$ & $-1$ & $\pm \sqrt{-1}$ \\ 
${-24}$ & $-3$ & $\pm \sqrt{-3}$ \\ 
${-43}$ & $9$ & $\pm 3$  \\ 
${-163}$ & ${4}/{81}$ & $\pm {2}/{9}$  \\ 
${-228}$ & $-{19}/{9}$ & $ \pm{\sqrt{-19}}/{3}$   \\

${-232}$ & ${29}/{81}$ & $ \pm{\sqrt{29}}/{9}$   \\
${-532}$ & $-{9}/{49}$ & $ \pm{3\sqrt{-1}}/{7}$   \\ 
${-760}$ & $-{171}/{16}$ & $ \pm{3\sqrt{-19}}/{4}$   \\ \hline
\end{tabular}
\bigskip
\caption{CM-values of $X^{38}_0(1)$}

\begin{tabular}{|c|c|c|}
\hline
CM-point& ${X_{0}^{39}(1)/W_{39,1}}$ & ${X_{0}^{39}(1)/w_{39}}$ \\ 
        discriminant  & $s$ & $x\ (s=(2x^2-3x-2)/{x})$ \\ \hline
${-7}$ & $\infty$ & $0,\infty$   \\ 
${-15}$ & $-1$ & $(1\pm\sqrt{5})/{2}$  \\ 
${-19}$ & $-3$ & $\pm 1$  \\ 
${-24}$ & $1$ & $1\pm\sqrt{2}$ \\ 
${-28}$ & $0$ & $-{1}/{2},2$  \\ 
${-60}$ & $-5$ & $(-1\pm\sqrt{5})/2$  \\
${-67}$ & ${7}/{3}$ & $-{1}/{3},3$  \\ 
${-91}$ & $-{1}/{3}$ & $ (2\pm\sqrt{13})/{3}$  \\ 
${-123}$ & ${1}/{5}$ & $ (4\pm\sqrt{11})/{5}$ \\ 
${-163}$ & $-{57}/{35}$ & $ -{5}/{7},{7}/{5}$  \\ 

${-195}$ & $5$ & $ 2\pm \sqrt{5}$ \\ 
${-267}$ & $-{47}/{5}$ & $ (-8\pm\sqrt{89})/{5}$  \\ 
${-312}$ & $-{17}/{7}$ & $(1\pm 5\sqrt{2})/{7}$   \\ 
$-{403}$ & $-{75}/{17}$ & $ (-6\pm5\sqrt{13})/{17}$  \\ \hline

\end{tabular}
\bigskip
\caption{CM-values of $X^{39}_0(1)$}

\end{table}

\renewcommand{\arraystretch}{1.5}
\tabcolsep=10pt                         
\begin{table}
\begin{tabular}{|c|c|c|}
\hline
CM-point& ${X_{0}^{51}(1)/W_{51,1}}$ & ${X_{0}^{51}(1)/w_{51}}$  \\ 
       discriminant   & $s$ & $x\ (s=x^2)$   \\ \hline
${-3}$ & $\infty$ & $\infty$   \\ 
${-7}$ & $1$ & $\pm 1$  \\ 
${-12}$ & $0$ & $ 0$  \\ 
${-24}$ & $-{1}/{3}$ & $\pm{1}/{\sqrt{-3}}$ \\ 
${-28}$ & ${1}/{9}$ & $\pm{1}/{3}$ \\ 
${-51}$ & $-3$ & $\pm\sqrt{-3}$  \\ 
${-163}$ & ${1}/{16}$ & $ \pm{1}/{4}$   \\
${-187}$ & $-{11}/{9}$ & $\pm{\sqrt{-11}}/{3}$  \\
${-267}$ & $-{25}/{3}$ & $\pm{5}/{\sqrt{-3}}$  \\ 
${-408}$ & $-{1}/{6}$ & $ \pm{1}/{\sqrt{-6}}$ \\ \hline
\end{tabular}
\bigskip
\caption{CM-values of $X^{51}_0(1)$}

\begin{tabular}{|c|c|c|}
\hline
CM-point& ${X_{0}^{55}(1)/W_{55,1}}$ & ${X_{0}^{55}(1)/w_{55}}$ \\ 
        discriminant  & $s$ & $x\ (s=(x^2-1)/{x})$   \\ \hline
${-3}$ & $-{3}/{2}$ & ${1}/{2},-2$  \\ 
${-12}$ & $\infty$ & $0,\infty$ \\ 
${-15}$ & $1$ & $(1\pm\sqrt{5})/{2}$  \\ 
${-27}$ & $0$ & $ \pm1$  \\ 
${-60}$ & $-1$ & $(-1\pm\sqrt{5})/{2}$  \\ 
${-67}$ & ${8}/{3}$ & $-{1}/{3},3$ \\ 
${-88}$ & $-2$ & $-1\pm\sqrt{2}$ \\ 
${-115}$ & $-4$ & $ -2\pm\sqrt{5}$ \\ 
${-163}$ & $-{5}/{6}$ & $-{3}/{2},{2}/{3}$ \\
${-187}$ & ${1}/{2}$ & $(1\pm\sqrt{17})/{4}$  \\ 

${-235}$ & $-{4}/{11}$ & $(-2\pm5\sqrt{5})/{11}$  \\ 
${-715}$ & $-{4}/{3}$ & $(-2\pm\sqrt{13})/{3}$  \\ \hline

\end{tabular}
\bigskip
\caption{CM-values of $X^{55}_0(1)$}

\end{table}

\renewcommand{\arraystretch}{1.5}
\tabcolsep=10pt                         
\begin{table}
\renewcommand{\arraystretch}{1.5}
\tabcolsep=10pt                         
\begin{tabular}{|c|c|c|}
\hline
CM-point& ${X_{0}^{57}(1)/W_{57,1}}$ & ${X_{0}^{57}(1)/W_{57,1}}$  \\ 
         discriminant & $s$ & $x$\\ \hline
${-4}$ & $\infty$ & $\infty$  \\ 
${-7}$ & $-1$ & $\pm\sqrt{-7}$   \\ 
${-16}$ & $0$ & $\pm\sqrt{-1}$ \\ 
${-19}$ & $-{1}/{3}$ & $ \pm\sqrt{-19}/3$  \\
${-24}$ & $1$ & $\pm\sqrt{-3}$ \\ 
${-28}$ & ${1}/{3}$ & $\pm\sqrt{-7}/3$ \\
${-43}$ & $-3$ & $\pm\sqrt{-43}$  \\ 
${-123}$ & $-{1}/{2}$ & $ \pm\sqrt{
-3}$  \\ 
${-163}$ & $-{11}/{6}$ & $\pm\sqrt{-163}/3$  \\ 
${-267}$ & $-11$ & $\pm13\sqrt{-3}$ \\ \hline

\end{tabular}
\bigskip
\caption{CM-values of $X^{57}_0(1)$}

\begin{tabular}{|c|c|c|}
\hline
CM-point& ${X_{0}^{58}(1)/W_{58,1}}$ & ${X_{0}^{58}(1)/w_{29}}$  \\ 
        discriminant  & $s$ & $x\ (s={-x^2}/{19})$   \\ \hline

${-3}$ & ${27}/{19}$ & $\pm 3\sqrt{-3}$  \\ 
${-8}$ & $\infty$ & $\infty$  \\ 
${-11}$ & ${11}/{19}$ & $\pm \sqrt{-11}$   \\ 
${-19}$ & $1$ & $\pm \sqrt{-19}$    \\
${-27}$ & ${3}/{19}$ & $\pm \sqrt{-3}$   \\ 
${-40}$ & $-{5}/{19}$ & $\pm \sqrt{5}$   \\ 
${-43}$ & ${43}/{19}$ & $\pm \sqrt{-43}$ \\ 
${-148}$ & ${25}/{19}$ & $\pm 5\sqrt{-1}$  \\ 
${-163}$ & ${163}/{475}$ & $ \pm{\sqrt{-163}}/{5}$  \\ 
${-232}$ & $0$ & $ 0$ \\ \hline
\end{tabular}
\bigskip
\caption{CM-values of $X^{58}_0(1)$}

\end{table}

\renewcommand{\arraystretch}{1.5}
\tabcolsep=10pt                         
\begin{table}
\renewcommand{\arraystretch}{1.5}
\tabcolsep=10pt                         
\begin{tabular}{|c|c|c|}
\hline
CM-point& ${X_{0}^{62}(1)/W_{62,1}}$ & ${X_{0}^{62}(1)/w_{62}}$  \\ 
       discriminant   & $s$ & $x\ (s=x^2)$   \\ \hline
${-4}$ & $\infty$ & $\infty$  \\ 
${-8}$ & $0$ & $0$  \\ 
${-19}$ & $1$ & $\pm 1$  \\
${-20}$ & $-1$ & $\pm \sqrt{-1}$  \\ 
${-40}$ & $-{1}/{2}$ & $\pm 1/\sqrt{-2}$  \\ 
${-67}$ & ${1}/{4}$ & $\pm {1}/{2}$  \\ 
${-163}$ & $25$ & $\pm 5$   \\
${-372}$ & $-{4}/{9}$ & $\pm {2\sqrt{-1}}/{3}$  \\ 
${-403}$ & ${13}/{4}$ & $ \pm{\sqrt{13}}/{2}$  \\ \hline
\end{tabular}
\bigskip
\caption{CM-values of $X^{62}_0(1)$}

\renewcommand{\arraystretch}{1.5}
\tabcolsep=10pt                         

\begin{tabular}{|c|c|c|}
\hline
CM-point& ${X_{0}^{69}(1)/W_{69,1}}$ & ${X_{0}^{69}(1)/w_{69}}$ \\ 
        discriminant  & $s$ & $x\ (s=x^2)$   \\ \hline
${-3}$ & $\infty$ & $\infty$  \\ 
${-4}$ & $1$ & $\pm1$ \\ 

${-12}$ & $0$ & $0$  \\
${-24}$ & $-3$ & $ \pm\sqrt{-3}$  \\ 
${-75}$ & ${9}/{5}$ & $\pm{3/\sqrt{5}}$ \\ 
${-115}$ & $5$ & $\pm\sqrt{5}$ \\ 
${-123}$ & $-{1}/{3}$ & $\pm 1/\sqrt{-3}$ \\ 
${-147}$ & $-{9}/{7}$ & $\pm{3}/{\sqrt{-7}}$   \\ 
${-163}$ & ${25}/{9}$ & $\pm{{5}}/{3}$  \\ 
${-483}$ & $-27$ & $\pm3\sqrt{-3}$  \\ \hline
\end{tabular}
\bigskip
\caption{CM-values of $X^{69}_0(1)$}

\end{table}

\renewcommand{\arraystretch}{1.5}
\tabcolsep=10pt                         
\begin{table}
\begin{tabular}{|c|c|c|}
\hline
CM-point& ${X_{0}^{74}(1)/W_{74,1}}$ & ${X_{0}^{74}(1)/w_{74}}$ \\ 
       discriminant   & $s$ & $x\ (s=(x^2-9)/{4})$  \\ \hline
${-8}$ & $\infty$ & $\infty$  \\ 
${-19}$ & $-2$ & $\pm 1$  \\
${-20}$ & $-1$ & $\pm \sqrt{5}$   \\
${-24}$ & $-3$ & $\pm \sqrt{-3}$  \\ 
${-35}$ & $\pm\sqrt{5}$ & $\pm 2\pm \sqrt{5}$\\
${-43}$ & $0$ & $\pm 3$  \\
${-51}$ & $-2\pm \sqrt{-3}$ & $\pm 2\pm \sqrt{-3}$   \\ 
${-52}$ & $1$ & $\pm \sqrt{13}$  \\
${-88}$ & $-5$ & $\pm \sqrt{-11}$ \\ 
${-91}$ & $2\pm \sqrt{13}$ & $\pm 2\pm \sqrt{13}$  \\
${-148}$ & $-{9}/{4}$ & $0$ \\ 
${-163}$ & $10$ & $\pm 7$   \\ \hline
\end{tabular}
\bigskip
\caption{CM-values of $X^{74}_0(1)$}

\renewcommand{\arraystretch}{1.5}
\tabcolsep=10pt                         
\begin{tabular}{|c|c|c|}
\hline
CM-point& ${X_{0}^{82}(1)/W_{82,1}}$ & ${X_{0}^{82}(1)/w_{41}}$  \\ 
        discriminant  & $s$ & $x$  \\ \hline
${-3}$ & ${1}/{2}$ & $ \pm{3\sqrt{-3}}/{2}$ \\ 
${-11}$ & $0$ & $ \pm\sqrt{-11}$ \\ 
${-19}$ & $\infty $ & $\infty $ \\ 
${-24}$ & $1$ & $\pm2\sqrt{-3}$  \\ 
${-27}$ & $-1$ &$\pm4\sqrt{-3}$ \\
${-52}$ & ${1}/{3}$ & $ \pm8\sqrt{-1}/3$
\\ 
${-67}$ & ${2}/{3}$ & $\pm\sqrt{-67}/3$ \\ 
${-88}$ & $-{1}/{2}$ & $ \pm3\sqrt{-11}/2$
\\ 
${-123}$ & ${1}/{4}$ & $ \pm\sqrt{-123}/4$  \\
${-232}$ & ${7}/{13}$ & $ \pm24\sqrt{-2}/13$ \\ \hline
\end{tabular}
\bigskip
\caption{CM-values of $X^{82}_0(1)$}
\end{table}

\renewcommand{\arraystretch}{1.5}
\tabcolsep=10pt                         
\begin{table}
\begin{tabular}{|c|c|c|c|}
\hline
CM-point& ${X_{0}^{86}(1)/W_{86,1}}$ & ${X_{0}^{86}(1)/w_{86}}$ \\ 
       discriminant   & $s$ & $x\ (s=(-x^2+9)/{4})$   \\ \hline
${-4}$ & $\infty$ & $\infty$   \\ 
${-11}$ & $2$ & $\pm 1$  \\ 
${-24}$ & $3$ & $ \pm{\sqrt{-3}}$  \\ 
${-35}$ & $\pm \sqrt{5}$ & $\pm 2\pm \sqrt{5}$ \\ 
${-40}$ & $1$ & $\pm \sqrt{5}$  \\ 
${-43}$ & ${9}/{4} $ & $0 $  \\ 
${-52}$ & ${5}/{2}$ & $\pm \sqrt{-1}$  \\ 
${-56}$ & $1\pm \sqrt{2}$ & $\pm \sqrt{5\pm 4\sqrt{2}}$ \\ 
${-67}$ & $0$ & $\pm 3$  \\ 
${-68}$ & $(5\pm\sqrt{17})/{4}$ & $\pm \sqrt{4\pm \sqrt{17}}$   \\ 
${-228}$ & $10\pm \sqrt{57}$ & $\pm\sqrt{19}\pm 2\sqrt{-3}$   \\ 
${-232}$ & $-5$ & $ \pm\sqrt{29}$  \\ \hline

\end{tabular}
\bigskip
\caption{CM-values of $X^{86}_0(1)$}

\renewcommand{\arraystretch}{1.5}
\tabcolsep=10pt                         
\begin{tabular}{|c|c|c|}
\hline
CM-point& ${X_{0}^{87}(1)/W_{87,1}}$ & ${X_{0}^{87}(1)/w_{87}}$ \\ 
        discriminant  & $s$ & $x\ (s=x^2)$  \\ \hline
${-3}$ & $\infty $ & $\infty $  \\ 
${-12}$ & $0$ & $0$  \\ 
${-15}$ & $-3$ & $\pm \sqrt{-3}$   \\ 
${-19}$ & $1$ & $ \pm1$  \\ 
${-43}$ & $9$ & $\pm 3$  \\
${-48}$ & $-1$ & $\pm \sqrt{-1}$  \\ 

${-60}$ & $-{1}/{3}$ & $\pm {\sqrt{-3}}/{3}$  \\ 
${-147}$ & $-{9}/{7}$ & $\pm{3/\sqrt{-7}}$  \\ 
${-435}$ & $-{5}/{3}$ & $\pm {\sqrt{-15}}/{3}$ \\ \hline

\end{tabular}
\bigskip
\caption{CM-values of $X^{87}_0(1)$}
\end{table}

\renewcommand{\arraystretch}{1.5}
\tabcolsep=10pt                         
\begin{table}
\begin{tabular}{|c|c|c|}
\hline
CM-point& ${X_{0}^{93}(1)/W_{93,1}}$ & ${X_{0}^{93}(1)/w_{31}}$ \\ 
        discriminant  & $s$ & $x$  \\ \hline
${-4}$ & $\infty $ & $\infty $ \\ 
${-7}$ & $-1$ & $\pm\sqrt{-7}$   \\ 
${-16}$ & $0$ & $\pm3\sqrt{-1}$  \\ 
${-19}$ & $1$ & $\pm\sqrt{-19}$ \\ 
${-28}$ & $3$ & $\pm3\sqrt{-7}$ \\ 
${-51}$ & $-3$ & $\pm3\sqrt{-3}$  \\ 
${-67}$ & $-{1}/{3}$ & $\pm\sqrt{-67}/3$ \\ 

${-163}$ & $-7$ & $\pm\sqrt{-163}$ \\ 
${-267}$ & ${3}/{2}$ & $\pm3\sqrt{-3}$ \\ 
${-403}$ & ${7}/{3}$ & $\pm\sqrt{-403}/3$ \\ \hline

\end{tabular}
\bigskip
\caption{CM-values of $X^{93}_0(1)$}

\renewcommand{\arraystretch}{1.5}
\tabcolsep=10pt                         
\begin{tabular}{|c|c|c|}
\hline
CM-point& ${X_{0}^{94}(1)/W_{94,1}}$ & ${X_{0}^{94}(1)/w_{94}}$ \\ 
         discriminant & $s$ & $x\ (s=2x^2-9)$   \\ \hline
${-3}$ & $-1$ & $\pm 2$   \\ 
${-4}$ & $-9$ & $0$  \\
${-8}$ & $\infty $ & $\infty $ \\ 
${-24}$ & $-5$ & $\pm \sqrt{2}$   \\ 
${-27}$ & $-7$ & $\pm 1$   \\ 
${-51}$ & $\pm\sqrt{17}$ & $(\pm 1\pm \sqrt{17})/{2}$ 
\\ 
${-56}$ & $-4\pm\sqrt{-7}$ & ${\pm \sqrt{10\pm 2\sqrt{-7}}}/{%
2}$  \\ 
${-84}$ & $-6\pm\sqrt{-7}$ & $(\pm\sqrt{-1}\pm\sqrt{7})/{2}$   \\ 
${-115}$ & $3\pm 4\sqrt{5}$ & $\pm 1\pm \sqrt{5}$   \\ 
${-148}$ & $-17$ & $\pm 2\sqrt{-1}$  \\ 
${-168}$ & $-3\pm2\sqrt{-7}$ & $\pm \sqrt{3\pm \sqrt{-7}}$  \\ 

${-235}$ & $-{13}/{5}$ & $\pm {4/\sqrt{5}}$  \\ \hline
\end{tabular}
\bigskip
\caption{CM-values of $X^{94}_0(1)$}
\end{table}

\clearpage

\renewcommand{\arraystretch}{1.5}
\tabcolsep=10pt                         
\begin{table}

\begin{tabular}{|c|c|c|}
\hline
CM-point& ${X_{0}^{95}(1)/W_{95,1}}$ & ${X_{0}^{95}(1)/w_{95}}$ \\ 
        discriminant  & $s$ & $x\ (s=(x^2-1)/2x)$  \\ \hline
${-7}$ & $\infty $ & $0, \infty $  \\ 

${-20}$ & $\pm \sqrt{-1}$ & $\pm \sqrt{-1}$  \\ 
${-28}$ & $-{3}/{4}$ & $-2,{1}/{2}$   \\ 
${-35}$ & ${1}/{2}$ & $(1\pm\sqrt{5})/{2}$  \\ 

${-43}$ & $0$ & $\pm 1$  \\ 
${-115}$ & $-{1}/{2}$ & $(-1\pm\sqrt{5})/{2}$   \\ 
${-163}$ & ${4}/{3}$ & $-{1}/{3},3$ 
\\ 
${-235}$ & $-2$ & $-2\pm\sqrt{5}$ \\ 
${-760}$ & ${1}/{7}$ & $(1\pm5\sqrt{2})/{7}$  \\ \hline

\end{tabular}
\bigskip
\caption{CM-values of $X^{95}_0(1)$}

\renewcommand{\arraystretch}{1.5}
\tabcolsep=10pt                         
\begin{tabular}{|c|c|c|}
\hline
CM-point& ${X_{0}^{111}(1)/W_{111,1}}$ & ${X_{0}^{111}(1)/w_{111}}$  \\
         discriminant & $s$ & $x\ (s=(x^2-2x-1)/(x^2+x-1))$ \\ \hline
${-15}$ & $\infty $ & $(-1\pm\sqrt{5})/{2} $  \\ 

${-19}$ & $1$ & $0$   \\ 
${-24}$ & $0$ & $1\pm\sqrt{2}$   \\ 
${-43}$ & $-2$ & $\pm1$  \\ 

${-51}$ & $-1$ & $(1\pm\sqrt{17})/{4}$   \\ 
${-52}$ & $\pm\sqrt{-1}$ & $(-1\pm \sqrt{-1})/{2},1\pm \sqrt{-1}$   \\ 
${-60}$ & $-{1}/{2}$ & $(1\pm\sqrt{5})/{2}$   \\ 
${-148}$ & $(2\pm6\sqrt{-1})/{5}$ & $\pm \sqrt{-1}$  \\ 
${-163}$ & $-{1}/{5}$ & $-{1}/{2},2$ \\ 
${-267}$ & $-{1}/{3}$ & $(5\pm\sqrt{89})/{8}$  \\ 
${-555}$ & $2$ & $-2\pm\sqrt{5}$ 
\\ \hline

\end{tabular}
\bigskip
\caption{CM-values of $X^{111}_0(1)$}

\end{table}

\renewcommand{\arraystretch}{1.5}
\tabcolsep=10pt                         
\begin{table}
\begin{tabular}{|c|c|c|}
\hline
CM-point& ${X_{0}^{119}(1)/W_{119,1}}$ & ${X_{0}^{119}(1)/w_{119}}$ \\ 
         discriminant & $s$ & $x\ (s=(x^2-5)/{4})$   \\ \hline
${-7}$ & $\infty $ & $\infty $ \\ 

${-11}$ & $-1$ & $\pm1$   \\ 
${-28}$ & $-{5}/{4}$ & $0$   \\ 
${-51}$ & $-2$ & $\pm\sqrt{-3}$  \\ 
${-56}$ & $\pm\sqrt{2}$ & $\pm\sqrt{5\pm4\sqrt{2}}$ \\ 
${-63}$ & $(-9\pm\sqrt{-3})/{6}$ & ${\pm\sqrt{-9\pm6\sqrt{-3}}}/{3}$   \\ 
${-91}$ & $-3$ & $\pm\sqrt{-7}$  \\ 
${-99}$ & $-1\pm\sqrt{-3}$ & $\pm\sqrt{1\pm4\sqrt{-3}}$ 
\\ 

${-112}$ & $-{3}/{2}$ & $\pm \sqrt{-1}$   \\ 
${-163}$ & $1$ & $\pm3$   \\ 

${-232}$ & $(-13\pm\sqrt{-2})/{9}$ & $(\pm 1\pm 2\sqrt{-2})/{3}$ 
\\ 
${-595}$ & $0$ & $\pm \sqrt{5}$   \\ \hline

\end{tabular}
\bigskip
\caption{CM-values of $X^{119}_0(1)$}

\renewcommand{\arraystretch}{1.5}
\tabcolsep=10pt                         
\begin{tabular}{|c|c|c|}
\hline
CM-point& ${X_{0}^{134}(1)/W_{134,1}}$ & ${X_{0}^{134}(1)/w_{134}}$ \\ 
discriminant & $s$ & $x\ (s=(2x^2-5x+2)/(x^2-2x+1))$  \\ \hline
${-4}$ & $\infty $ & $1 $ \\ 

${-19}$ & $2$ & $0,\infty$   \\ 
${-24}$ & $3$ & $(1\pm\sqrt{-3})/{2}$   \\ 
${-35}$ & $\pm \sqrt{5}$ & $(\pm 1\pm\sqrt{5})/{2}$  \\ 
${-40}$ & $1$ & $(3\pm\sqrt{5})/{2}$   \\ 
${-67}$ & ${9}/{4}$ & $-1$  \\
${-88}$ & $5$ & $(5\pm\sqrt{-11})/{6}$ \\ 
${-91}$ & $-5\pm 2\sqrt{13}$ & $(- 1\pm\sqrt{13})/{2},({1\pm\sqrt{13}})/{6}$  \\ 
${-148}$ & ${5}/{2}$ & $\pm \sqrt{-1}$  \\ 
${-163}$ & $0$ & ${1}/{2},2$ \\ \hline
\end{tabular}

\bigskip
\caption{CM-values of $X^{134}_0(1)$}
\end{table}

\renewcommand{\arraystretch}{1.5}
\tabcolsep=10pt                         
\begin{table}
\begin{tabular}{|c|c|c|}
\hline
CM-point& ${X_{0}^{146}(1)/W_{146,1}}$ & ${X_{0}^{146}(1)/w_{146}}$  \\ 
discriminant & $s$ & $x\ (s=(x^2-1)/{x})$   \\ \hline
${-11}$ & $\infty $ & $0,\infty$  \\ 

${-20}$ & $1$ & $(1\pm\sqrt{5})/{2}$\\ 
${-40}$ & $-1$ & $(-1\pm\sqrt{5})/{2}$   \\ 
${-43}$ & $0$ & $\pm 1$   \\ 
${-51}$ & $\pm \sqrt{-3}$ & $(\pm 1\pm\sqrt{-3})/{2}$ \\ 
${-52}$ & $3$ & $(3\pm \sqrt{13})/{2}$  \\ 

${-88}$ & $2$ & $1\pm\sqrt{2}$  \\ 

${-132}$ & $\pm \sqrt{-1}$ & $(\pm\sqrt{-1}\pm\sqrt{3})/{2}$   \\ 

${-232}$ & $5$ & $(5\pm \sqrt{29})/{2}$  \\ 
${-292}$ & $\pm 2\sqrt{-1}$ & $\pm \sqrt{-1}$   \\ \hline
\end{tabular}%
\bigskip
\caption{CM-values of $X^{146}_0(1)$}

\renewcommand{\arraystretch}{1.5}
\tabcolsep=10pt                         
\begin{tabular}{|c|c|c|}
\hline
CM-point& ${X_{0}^{159}(1)/W_{159,1}}$ & ${X_{0}^{159}(1)/w_{159}}$  \\ 
discriminant & $s$ & $x\ (s=(3x^2+7)/{2})$   \\ \hline
${-3}$ & $\infty $ & $\infty $  \\ 

${-12}$ & ${7}/{2}$ & $0$ \\
${-19}$ & $5$ & $\pm1$   \\ 
${-39}$ & $\pm\sqrt{13}$ & ${\pm\sqrt{-21\pm6\sqrt{13}}}/{3}$  \\ 
${-48}$ & $2$ & $\pm \sqrt{-1}$   \\ 
${-57}$ & $3$ & $\pm1/\sqrt{-3}$  \\ 
${-67}$ & ${11}/{3}$ & $\pm{1}/{3}$ \\ 
${-75}$ & ${19}/{5}$ & $\pm1/\sqrt{5}$   \\ 
${-84}$ & $(5\pm2\sqrt{7})/{3}$ & $(\pm 2\sqrt{-1}\pm\sqrt{-7})/{3}$   \\ 
${-120}$ & $(17\pm2\sqrt{10})/{3}$ & $(\pm2\sqrt{2}\pm\sqrt{5})/{3}$  \\ 
${-132}$ & $-7\pm6\sqrt{3}$ & $\pm2\sqrt{-1}\pm\sqrt{-3}$   \\ 
${-156}$ & $(34\pm\sqrt{13})/{9}$ & $\pm\sqrt{15\pm6\sqrt{13}}/{9} $  \\ 
${-232}$ & $(91\pm2\sqrt{-2})/{27}$ & $\pm\sqrt{-7\pm4\sqrt{-2}}/{9} $   \\

${-267}$ & $-1$ & $\pm\sqrt{-3}$     \\ 
${-795}$ & $11$ & $\pm\sqrt{5}$   \\ \hline

\end{tabular}%
\bigskip
\caption{CM-values of $X^{159}_0(1)$}
\end{table}

\renewcommand{\arraystretch}{1.5}
\tabcolsep=10pt                         
\begin{table}[htb]
\newcommand{\tabincell}[2]{\begin{tabular}{@{}#1@{}}#2\end{tabular}}
  \centering
\begin{tabular}{|c|c|c|}
\hline
CM-point& ${X_{0}^{194}(1)/W_{194,1}}$ & ${X_{0}^{194}(1)/w_{194}}$  \\ 
discriminant & $s$ & $x\ (s=(x^2-1)/{x})$  \\ \hline
${-19}$ & $\infty $ & $0,\infty $  \\ 

${-20}$ & $1$ & $(1\pm\sqrt{5})/{2}$ \\ 
${-40}$ & $-1$ & $(-1\pm\sqrt{5})/{2}$  \\
${-51}$ & $\pm \sqrt{-3}$ & $(\pm 1\pm\sqrt{-3})/{2}$ 
\\ 
${-52}$ & $-3$ & $(-3\pm\sqrt{13})/{2}$ \\
${-67}$ & $0$ & $\pm 1$  \\ 
${-123}$ & $(-3\pm 2\sqrt{-3})/{3}$ & $(-3\pm\sqrt{-3})/{2},(3\pm\sqrt{-3})/{6}$ 
\\ 
${-148}$ & $-{1}/{3}$ & $(-1\pm\sqrt{37})/{6}$ 
\\

${-232}$ & $5$ & $(5\pm \sqrt{29})/{2}$  \\ 
${-235}$ & $\pm \sqrt{5}$ & $(\pm 3\pm\sqrt{5})/{2}$ 
\\

${-388}$ & $\pm 2\sqrt{-1}$ & $\pm \sqrt{-1}$  \\ \hline
\end{tabular}
\bigskip
\caption{CM-values of $X^{194}_0(1)$}

\renewcommand{\arraystretch}{1.5}
\tabcolsep=10pt                         
  \centering
\begin{tabular}{|c|c|c|}
\hline
CM-point& ${X_{0}^{206}(1)/W_{206,1}}$ & ${X_{0}^{206}(1)/w_{206}}$  \\ 
discriminant & $s$ & $x\ (s=2x^2+1)$  \\ \hline
${-4}$ & $1$ & $0$ \\ 
${-8}$ & $\infty$ & $\infty$  \\ 
${-19}$ & $3$ & $\pm 1$   \\ 
${-52}$ & $-1$ & $\pm \sqrt{-1}$  \\ 
${-56}$ & $\pm\sqrt{-7}$ & $\pm \sqrt{-2\pm 2\sqrt{-7}}/{2}$   \\ 

${-91}$ & $-2\pm\sqrt{-7}$ & $\pm \sqrt{-6\pm 2\sqrt{-7}}/{2}$   \\ 

${-120}$ & $2\pm\sqrt{-15}$ & ${\pm \sqrt{2\pm 2\sqrt{-15}}%
}/{2}$ \\ 
${-132}$ & $8\pm\sqrt{33}$ & $(\pm\sqrt{3}\pm\sqrt{11})/{2}$  \\ 
${-163}$ & $9$ & $\pm 2$  \\ 
${-184}$ & $-2\pm\sqrt{-23}$ & $\pm \sqrt{-6\pm 2\sqrt{-23}%
}/{2}$ \\ 
${-232}$ & $-3$ & $\pm \sqrt{-2}$    \\ 
${-235}$ & $19\pm 8\sqrt{5}$ & $\pm 2\pm \sqrt{5}$  \\ 
${-267}$ & $\pm\sqrt{-3}$ & ${\pm \sqrt{-2\pm 2\sqrt{-3}}%
}/{2}$ \\ 
${-328}$ & $-2\pm\sqrt{41}$ & ${\pm \sqrt{-6\pm 2\sqrt{41}}%
}/{2}$  \\ 
${-372}$ & $-13\pm 8\sqrt{3}$ & $\pm 2\sqrt{-1}\pm\sqrt{-3}$  \\ \hline
\end{tabular}
\bigskip
\caption{CM-values of $X^{206}_0(1)$}
\end{table}

\renewcommand{\arraystretch}{1.5}
\tabcolsep=10pt                         
\begin{table}
\begin{tabular}{|c|c|c|c|c|}
\hline
CM-point& ${X_{0}^{6}(11)/W_{6,11}}$ & ${X_{0}^{6}(11)/\langle w_{6},w_{11}\rangle}$ & ${X_{0}^{6}(11)}/w_{66}$ \\ 
discriminant & $s$ & $t\ (s=(-t^2+2)/{2})$ & $x\ (t=(2x^2+2)/(x^2-2x-1))$  \\ \hline
${-19}$ & $-1$ & $\pm 2$ & $-1,0,1,\infty$  \\ 

${-24}$ & $0$ & $\pm \sqrt{2}$ & $- 1\pm \sqrt{2}$  \\ 

${-40}$ & $-{1}/{9}$ & $\pm 2\sqrt{5}/{3}$ & $-2\pm \sqrt{5},%
(-1\pm \sqrt{5})/{2}$  \\ 
${-43}$ & $-{1}/{49}$ & $\pm 10/{7}$ & $-3,-2,{1}/{3},%
{1}/{2}$  \\ 

${-51}$ & $-{1}/{17}$ & $\pm6/\sqrt{17}$ & $(-3\pm 
\sqrt{17})/{4},(-3\pm \sqrt{17})/{2}$ 
\\ 
${-52}$ & $-{1}/{25}$ & $\pm2 \sqrt{13}/{5}$ & $(-3\pm 
\sqrt{13})/{2},(-2\pm \sqrt{13})/{3}$  \\ 
${-84}$ & ${1}/{7}$ & $\pm 2\sqrt{21}/{7}$ & $(\sqrt{21}\pm 
\sqrt{-7})/(\sqrt{21}\pm 7)$  \\ 
${-88}$ & $\infty $ & $\infty $ & $1\pm\sqrt{2} $ \\ 
${-120}$ & ${3}/{5}$ & $\pm{ 2/\sqrt{5}}$ & $(\sqrt{5}%
\pm \sqrt{-15}/({5\pm \sqrt{5}})$ \\ 
${-123}$ & $-{9}/{41}$ & $\pm 10/\sqrt{41}$ & $
(-5\pm \sqrt{41})/{2},(-5\pm \sqrt{41})/{8}$  \\ 
${-132}$ & $1$ & $0$ & $\pm \sqrt{-1}$  \\ \hline
\end{tabular}
\bigskip
\caption{CM-values of $X^{6}_0(11)$}

\renewcommand{\arraystretch}{1.5}
\tabcolsep=10pt                         
\begin{tabular}{|c|c|c|}
\hline
CM-point& $X_{0}^{6}\left( 17\right) /W_{6,17}$ & $X_{0}^{6}\left( 17\right)
/<w_{3},w_{34}>$ \\ 
discriminant & $s$ & $x\ (s=x^2)$ \\ \hline
${-4}$ & $0$ & $0$ \\ 
${-19}$ & $1$ & $\pm 1$ \\ 
${-43}$ & $9$ & $\pm 3$ \\ 
${-51}$ & $\infty$ & $\infty$ \\ 
${-52}$ & $-1$ & $\pm \sqrt{-1}$ \\ 
${-67}$ & ${1}/{4}$ & $\pm {1}/{2}$ \\
${-84}$ & $-3$ & $\pm \sqrt{-3}$ \\ 
${-120}$ & $-{1}/{3}$ & $\pm 1/\sqrt{-3}$ \\ 
${-123}$ & ${1}/{9}$ & $\pm {1}/{3}$ \\ 
${-132}$ & $3$ & $\pm\sqrt{3}$ \\ 
${-408}$ & $-{16}/{3}$ & $\pm{4/\sqrt{-3}}$ \\ \hline
\end{tabular}
\bigskip
\caption{CM-values of $X^{6}_0(17)$}
\end{table}

\renewcommand{\arraystretch}{1.5}
\tabcolsep=10pt                         
\begin{table}
\begin{tabular}{|c|c|c|c|}
\hline
CM-point& $X_{0}^{6}\left( 19\right) /W_{6,19}$ & $X_{0}^{6}\left( 19\right)
/\langle w_2,w_{57} \rangle$ & $X_{0}^{6}\left( 19\right) /w_{_{114}}$ \\ 
discriminant & $s$ & $t\ (s=-t^2/{4})$ & $x\ (t=(x^2-1)/{x})$ \\ \hline
${-3}$ & $0$ & $0$ & $\pm 1$ \\ 
${-19}$ & $\infty $ & $\infty $ & $0,\infty $ \\
${-40}$ & $-{1}/{4}$ & $\pm1$ & $(\pm 1\pm \sqrt{5})
/{2}$ \\ 
${-51}$ & ${3}/{4}$ & $\pm\sqrt{-3}$ & $(\pm 1\pm 
\sqrt{-3})/{2}$ \\ 
${-52}$ & $-{9}/{4}$ & $\pm3$ & $(\pm 3\pm 
\sqrt{13})/{2}$ \\ 
${-67}$ & $-{9}/{16}$ & $\pm{ 3}/{2}$ & $\pm 2,\pm{ 1}/{%
2}$ \\
${-84}$ & $-{3}/{4}$ & $\pm\sqrt{3}$ & $(\pm \sqrt{%
3}\pm \sqrt{7})/{2}$ \\
${-88}$ & $-1$ & $\pm 2$ & $\pm 1\pm \sqrt{2}$ \\ 
${-132}$ & ${1}/{4}$ & $\pm\sqrt{-1}$ & $(\pm \sqrt{-1}\pm \sqrt{3})/{2}$ \\
${-148}$ & $-{1}/{36}$ & $\pm{ 1}/{3}$ & $%
(\pm 1\pm \sqrt{37})/{6}$ \\ 
${-228}$ & $1$ & $\pm 2\sqrt{-1}$ & $\pm \sqrt{-1}$ \\ \hline
\end{tabular}
\bigskip
\caption{CM-values of $X^{6}_0(19)$}

\renewcommand{\arraystretch}{1.5}
\tabcolsep=10pt                         
\begin{tabular}{|c|c|c|c|}
\hline
CM-point& $X_{0}^{6}\left( 29\right) /W_{6,29}$ & $X_{0}^{6}\left( 29\right) /\langle w_3,w_{58}\rangle$ & $%
X_{0}^{6}\left( 29\right) /w_{174}$ \\ 
discriminant & $s$ & $t\ (s={18}/(t^2+18))$ & $x\ (t={-12x}/(x^2-2))$ \\ \hline
${-4}$ & $1$ & $0$ & $0,\infty$ \\ 
${-24}$ & $0$ & $\infty $ & $\pm \sqrt{2}$ \\ 
${-51}$ & ${9}/{17}$ & $\pm4$ & $(\pm 3\pm \sqrt{17})/{2}$
\\ 
${-52}$ & $9$ & $\pm4\sqrt{-1}$ & $\pm \sqrt{-1},\pm 2\sqrt{-1}$ \\ 
${-67}$ & ${1}/{9}$ & $\pm 12$ & $\pm 1,\pm 2$ \\ 
${-88}$ & ${9}/{25}$ & $\pm4\sqrt{2}$ & $\pm{ \sqrt{2}%
}/{2},\pm 2\sqrt{2}$ \\
${-120}$ & $-{3}/{5}$ & $\pm4\sqrt{-3}$ & $(\pm\sqrt{-3}\pm\sqrt{5})/{2}$ \\ 
${-123}$ & ${9}/{41}$ & $\pm8$ & $(\pm 3\pm \sqrt{41})/{%
4}$ \\ 
${-132}$ & ${3}/{11}$ & $\pm4\sqrt{3}$ & $(\pm\sqrt{3}\pm\sqrt{11})/{2}$ \\
${-168}$ & $-{9}/{7}$ & $\pm4\sqrt{-2}$ & $(\pm 3\sqrt{-2}\pm\sqrt{14})/{4}$ \\ 
${-228}$ & ${27}/{19}$ & $\pm{ 4/\sqrt{-3}}$ & $(\pm 3\sqrt{-3}\pm\sqrt{-19})/{2}$ \\ 
${-232}$ & $\infty $ & $\pm 3\sqrt{-2}$ & $\pm \sqrt{-2}$ \\
${-267}$ & ${81}/{89}$ & $\pm{4}/{3}$ & $(\pm 9\pm \sqrt{89})/{2}$ \\ \hline
\end{tabular}
\bigskip
\caption{CM-values of $X^{6}_0(29)$}
\end{table}

\renewcommand{\arraystretch}{1.5}
\tabcolsep=10pt                         
\begin{table}
\begin{tabular}{|c|c|c|c|}
\hline
CM-point& $X_{0}^{6}\left( 31\right) /W_{6,31}$ & $X_{0}^{6}\left( 31\right)/\langle w_6,w_{31} \rangle $ & $%
X_{0}^{6}\left( 31\right) /w_{186}$ \\
discriminant & $s$ & $t\ (s=(t^2+3)/{3})$ & $x\ (t=(x^2-3)/{2x})$ \\ \hline
${-3}$ & $\infty $ & $\infty $ & $0,\infty $ \\ 
${-24}$ & $0$ & $\pm \sqrt{-3}$ & $\pm \sqrt{-3}$ \\ 
${-43}$ & ${4}/{3}$ & $\pm 1$ & $\pm 3,\pm 1$ \\ 
${-52}$ & ${16}/{3}$ & $\pm \sqrt{13}$ & $\pm 4\pm \sqrt{13}$ \\ 
${-84}$ & ${16}/{9}$ & $\pm \sqrt{21}/{3}$ & $(\pm 4\sqrt{3}%
\pm \sqrt{21})/{3}$ \\ 
${-88}$ & ${16}/{27}$ & $\pm{ \sqrt{-11}}/{3}$ & $(\pm 4\pm 
\sqrt{-11})/{3}$ \\ 
${-120}$ & ${8}/{3}$ & $\pm \sqrt{5}$ & $\pm 2\sqrt{2}\pm \sqrt{5}$ \\ 

${-123}$ & $-{16}/{9}$ & $\pm {5/\sqrt{-3}}$ & $\pm 3\sqrt{-3}, \pm {\sqrt{-3}}/{3}$ \\ 
${-148}$ & ${64}/{27}$ & $\pm{ \sqrt{37}}/{3}$ & $(\pm 8\pm 
\sqrt{37})/{3}$ \\ 
${-168}$ & ${8}/{9}$ & $\pm{ \sqrt{-3}}/{3}$ & $(\pm 2\sqrt{6}%
\pm \sqrt{-3})/{3}$ \\ 
${-228}$ & $-{16}/{3}$ & $\pm \sqrt{-19}$ & $\pm 4\sqrt{-1}\pm \sqrt{-19}$ \\ 

${-232}$ & ${1}/{3}$ & $\pm \sqrt{-2}$ & $\pm 1\pm \sqrt{-2}$ \\ 
${-372}$ & $1$ & $0$ & $  \pm\sqrt{3} $ \\ 
${-403}$ & ${52}/{27}$ & $\pm{ 5}/{3}$ & $(\pm 5\pm 2\sqrt{13}%
)/{3}$ \\ \hline
\end{tabular}
\bigskip
\caption{CM-values of $X^{6}_0(31)$}

\renewcommand{\arraystretch}{1.5}
\tabcolsep=10pt                         
\begin{tabular}{|c|c|c|c|}
\hline
CM-point& $X_{0}^{6}\left( 37\right) /W_{6,37}$ & $X_{0}^{6}\left( 37\right)
/{\langle w_6,w_{37}\rangle}$ & $X_{0}^{6}\left( 37\right) /w_{222}$ \\
discriminant & $s$ & $t\ (s={t^2}/(t^2+4))$ & $x\ (t=(x^2-1)/{x})$ \\ \hline
${-3}$ & $0$ & $0$ & $\pm 1$ \\ 
${-4}$ & $1$ & $\infty $ & $0,\infty$ \\ 
${-40}$ & $9$ & $\pm{3/\sqrt{-2}}$ & $\pm{ \sqrt{-2}}/{2},\pm 
\sqrt{-2}$ \\ 
${-67}$ & ${9}/{25}$ & $\pm{ 3}/{2}$ & $\pm{ 1}/{2},\pm 2$ \\ 

${-84}$ & $-{9}/{7}$ & $\pm{ 3\sqrt{-1}}/{2}$ & $(\pm \sqrt{7}\pm 3\sqrt{-1})/{%
4}$ \\ 
${-120}$ & $-{27}/{5}$ & $\pm{ 3\sqrt{-6}}/{4}$ & $(\pm \sqrt{%
10}\pm 3\sqrt{-6})/{8}$ \\
${-123}$ & $-3$ & $\pm\sqrt{-3}$ & $(\pm 1\pm \sqrt{-3})/{2}
$ \\ 
${-132}$ & ${27}/{11}$ & $\pm{ 3\sqrt{-3}}/{2}$ & $(\pm 3%
\sqrt{-3}\pm \sqrt{-11})/{4}$ \\ 

${-148}$ & $\infty$ & $\pm2\sqrt{-1}$ & $\pm\sqrt{-1}$ \\
${-232}$ & ${81}/{49}$ & $\pm{ 9\sqrt{-2}}/{4}$ & $\pm{ \sqrt{%
-2}}/{4},\pm 2\sqrt{-2}$ \\ 
${-312}$ & $-{3}/{13}$ & $\pm{ \sqrt{-3}}/{2}$ & $(\pm \sqrt{%
13}\pm \sqrt{-3})/{4}$ \\ 
${-408}$ & ${9}/{17}$ & $\pm{ 3\sqrt{2}}/{2}$ & $(\pm 3\sqrt{2%
}\pm \sqrt{34})/{4}$ \\ 
${-555}$ & $-{27}/{37}$ & $\pm{ 3\sqrt{-3}}/{4}$ & $(\pm 
\sqrt{37}\pm 3\sqrt{-3})/{8}$ \\ \hline
\end{tabular}

\bigskip
\caption{CM-values of $X^{6}_0(37)$}
\end{table}

\renewcommand{\arraystretch}{1.5}
\tabcolsep=10pt                         
\begin{table}
\begin{tabular}{|c|c|c|c|}
\hline
CM-point& $X_{0}^{10}\left( 11\right) /W_{10,11}$ & $X_{0}^{10}\left( 11\right)
/\langle w_{10},w_{11}\rangle$ & $X_{0}^{10}\left( 11\right) /w_{110}$ \\
discriminant & $s$ & $t\ (s={-t^2}/{2})$ & $x\ (t=(x^2-2)/{2x})$ \\ \hline
${-8}$ & $\infty $ & $\infty $ & $0,\infty $ \\
${-35}$ & ${7}/{8}$ & $\pm{ \sqrt{-7}}/{2}$ & $(\pm 1\pm 
\sqrt{-7})/{2}$ \\ 
${-40}$ & $1$ & $\pm \sqrt{-2}$ & $\pm \sqrt{-2}$ \\ 
${-43}$ & $-{1}/{8}$ & $\pm{ 1}/{2}$ & $\pm 2,\pm 1$ \\ 
${-52}$ & ${9}/{8}$ & $\pm{ 3\sqrt{-1}}/{2}$ & $\pm 2\sqrt{-1},\pm \sqrt{-1}$ \\ 

${-88}$ & $0$ & $0$ & $\pm \sqrt{2}$ \\
${-120}$ & ${3}/{8}$ & $\pm{ \sqrt{-3}}/{2}$ & $(\pm 
\sqrt{5}\pm \sqrt{-3})/{2}$ \\ 
${-132}$ & ${11}/{8}$ & $\pm{ \sqrt{-11}}/{2}$ & $(\pm 
\sqrt{-3}\pm \sqrt{-11})/{2}$ \\ 
${-187}$ & $-{9}/{8}$ & $\pm{ 3}/{2}$ & $(\pm 3\pm 
\sqrt{17})/{2}$ \\ 
${-340}$ & ${17}/{8}$ & $\pm{ \sqrt{-17}}/{2}$ & $(\pm
3\sqrt{-1}\pm \sqrt{-17})/{2}$ \\ 
${-660}$ & ${33}/{32}$ & $\pm{ \sqrt{-33}}/{4}$ & $(\pm \sqrt{-1}\pm \sqrt{-33})/{4}$ \\ 
${-715}$ & ${99}/{104}$ & $\pm{ 3\sqrt{-143}%
}/{26}$ & $(\pm 3\sqrt{-143}\pm \sqrt{65})/{26}$ \\ \hline
\end{tabular}
\bigskip
\caption{CM-values of $X^{10}_0(11)$}

\renewcommand{\arraystretch}{1.5}
\tabcolsep=10pt                         
\begin{tabular}{|c|c|c|}
\hline
CM-point& $X_{0}^{10}\left( 13\right) /W_{10,13}$ & $X_{0}^{10}\left( 13\right)/\langle w_{2}, w_{65} \rangle $ \\ 

discriminant & $s$ & $x\ (s=x^2)$ \\ \hline
${-3}$ & $1$ & $\pm 1$ \\
${-35}$ & $5$ & $\pm\sqrt{5}$ \\ 
${-40}$ & $\infty $ & $\infty$ \\ 
${-43}$ & $9$ & $\pm 3$ \\ 
${-52}$ & $0$ & $0$ \\ 
${-88}$ & ${25}/{9}$ & $\pm{5}/{3}$ \\ 
${-120}$ & $-15$ & $\pm \sqrt{-15}$ \\ 
${-195}$ & $-{5}/{3}$ & $\pm{ \sqrt{-15}}/{3}$ \\ 
${-235}$ & ${5}/{9}$ & $\pm{ \sqrt{5}}/{3}$ \\ 
${-312}$ & $-{13}/{3}$ & $\pm{ \sqrt{-39}}/{3}$ \\ \hline
\end{tabular}
\bigskip
\caption{CM-values of $X^{10}_0(13)$}
\end{table}

\renewcommand{\arraystretch}{1.5}
\tabcolsep=10pt                         
\begin{table}
\begin{tabular}{|c|c|c|}
\hline
CM-point& $X_{0}^{10}\left( 19\right) /W_{10,19}$ & $X_{0}^{10}\left( 19\right)/\langle w_{5}, w_{38} \rangle $ \\ 

discriminant & $s$ & $x\ (s=x^2)$ \\ \hline
${-3}$ & $1$ & $\pm 1$ \\ 
${-8}$ & $0$ & $0 $ \\ 
${-52}$ & $\infty$ & $\infty$ \\ 
${-52}$ & $-4$ & $\pm 2\sqrt{-1}$ \\ 
${-67}$ & ${4}/{27}$ & $\pm {2}/{3}$ \\ 
${-88}$ & $2$ & $\pm \sqrt{2}$ \\
${-148}$ & $-1$ & $\pm \sqrt{-1}$ \\ 
${-228}$ & ${4}/{3}$ & $\pm {2/\sqrt{3}}$ \\ 
${-280}$ & ${4}/{5}$ & $\pm {2/\sqrt{5}}$ \\ 
${-532}$ & $-{1}/{4}$ & $\pm {\sqrt{-1}}/{2}$ \\ 
${-760}$ & ${32}/{5}$ & $\pm {4\sqrt{10}}/{5}$ \\ \hline
\end{tabular}
\bigskip
\caption{CM-values of $X^{10}_0(19)$}

\renewcommand{\arraystretch}{1.5}
\tabcolsep=10pt                         
\begin{tabular}{|c|c|c|c|}
\hline
CM-point& $X_{0}^{10}\left( 23\right) /W_{10,23}$ &$X_{0}^{10}\left( 23\right)
/\langle w_{10},w_{23}\rangle$  &$X_{0}^{10}\left( 23\right)
/w_{230}$  \\ 
discriminant & $s$ & $t\ (s={t^2}/(t^2-5))$ & $x\ (t=(5x^2+5)/(x^2-4x-1))$ \\ \hline
${-20}$ & $0$ & $0$ & $\pm \sqrt{-1}$ \\ 
${-40}$ & $\infty $ & $\pm \sqrt{5}$ & $(-1\pm \sqrt{5})/{2}$ \\ 
${-43}$ & ${5}/{4}$ & $\pm 5$ & $-{1}/{2},0,2,\infty$ \\ 
${-67}$ & $5$ & $\pm{ 5}/{2}$ & $-3,-1,{1}/{3},1$ \\ 
${-88}$ & ${5}/{16}$ & $\pm{ 5/\sqrt{-11}}$ & $(-1\pm 
\sqrt{-11})/{6},(1\pm \sqrt{-11})/{2}$ \\ 
${-115}$ & $1$ & $\infty $ & $2\pm \sqrt{5}$ \\ 
${-120}$ & ${5}/{8}$ & $\pm{ 5/\sqrt{-3}}$ & $(2\sqrt{-1}%
\pm 2\sqrt{-2})/({\sqrt{-1}\pm \sqrt{3}})$ \\ 
${-148}$ & ${5}/{9}$ & $\pm{ 5\sqrt{-1}}/{2}$ & $(-1\pm 2\sqrt{-1})/{5},1\pm 2\sqrt{-1}
$ \\ 
${-235}$ & ${25}/{16}$ & $\pm{ 5\sqrt{5}}/{3}$ & $(-11\pm 5%
\sqrt{5})/{2},(1\pm \sqrt{5})/{2}$ \\ 
${-520}$ & $-{5}/{8}$ & $\pm {5/\sqrt{13}}$ & ${2(1\pm\sqrt{-2})}/({1\pm\sqrt{13}})$ \\ \hline
\end{tabular}
\bigskip
\caption{CM-values of $X^{10}_0(23)$}
\end{table}

\renewcommand{\arraystretch}{1.5}
\tabcolsep=10pt                         
\begin{table}
\begin{tabular}{|c|c|c|}
\hline
CM-point& $X_{0}^{14}\left( 3\right) /W_{14,3}$ & $X_{0}^{14}\left( 3\right)/\langle w_3,w_{14}\rangle $ \\ 

 discriminant & $s$ & $ x\ (s=x^2)$ \\ \hline
${-8}$ & $0$ & $0 $ \\ 
${-11}$ & $1$ & $\pm 1$ \\ 
${-35}$ & $-{1}/{7}$ & $\pm{1/ \sqrt{-7}}$ \\ 
${-51}$ & ${1}/{9}$ & $\pm {1}/{3}$ \\
${-84}$ & $\infty $ & $\infty$ \\ 
${-120}$ & $-{1}/{27}$ & $\pm {\sqrt{-3}}/{9}$ \\ 
${-123}$ & ${25}/{9}$ & $\pm{ 5}/{3}$ \\
${-168}$ & $-{2}/{9}$ & $\pm{\sqrt{-2}}/{3}$ \\
${-228}$ & $-{25}/{27}$ & $\pm{ 5/\sqrt{-3}}$ \\ 
${-267}$ & ${25}/{1521}$ & $\pm{ 5}/{39}$ \\ 
${-312}$ & ${49}/{117}$ & $\pm{ 7\sqrt{13}}/{39}$ \\ \hline
\end{tabular}
\bigskip
\caption{CM-values of $X^{14}_0(3)$}

\renewcommand{\arraystretch}{1.5}
\tabcolsep=10pt                         
\begin{tabular}{|c|c|c|c|}
\hline
CM-point& $X_{0}^{14}\left( 5\right)/ W_{14,5}$ & $X_{0}^{14}\left( 5\right)/\langle w_{5},w_{14}\rangle $ & $%
X_{0}^{14}\left( 5\right) /w_{14}$ \\
discriminant & $s$ & $t\ (s=t^2)$ & $x\ (t=(x^2-x-1)/(2x^2+2))$ \\ \hline
${-4}$ & $\infty$ & $\infty $ & $\pm\sqrt{-1} $ \\ 
${-11}$ & $1$ & $\pm 1$ & $(-1\pm \sqrt{-11})/{2},%
(1\pm \sqrt{-11})/{6}$ \\ 
${-35}$ & $0$ & $0$ & $(1\pm \sqrt{5})/{2}$ \\
${-84}$ & $-1$ & $\pm \sqrt{-1}$ & ${(1\pm 2\sqrt{-1})(1\pm\sqrt{21})}/{10}$ \\ 
${-91}$ & $-7$ & $\pm \sqrt{-7}$ & $({1\pm 3\sqrt{13}})/{(2\pm 4\sqrt{-7})}$ \\
${-120}$ & $5$ & $\pm \sqrt{5}$ & $(1\pm 5\sqrt{-3})/{(2\pm 4\sqrt{5})}$ \\ 
${-235}$ & ${25}/{81}$ & $\pm {5}/{9}$ & $(-9\pm\sqrt{5})/{2},(9\pm\sqrt{5})/{3}$ \\ 
${-280}$ & ${5}/{16} $ & $\pm{\sqrt{5}}/{4}$ & $-2\pm \sqrt{5}$ \\ 
${-340}$ & $-25$ & $\pm 5\sqrt{-1}$ & ${(1\pm 10\sqrt{-1})(1\pm 9\sqrt{5})}/{202}$ \\ 
${-420}$ & ${5}/{9}$ & $\pm {\sqrt{5}}/{3}$ & $(3\pm\sqrt{-35})/{(6\pm4\sqrt{5})}$ \\
${-520}$ & ${5}/{81}$ & $\pm {\sqrt{5}}/{9}$ & $(9\pm 5\sqrt{13})/{(18\pm4\sqrt{5})}$ \\
${-840}$ & $-{35}/{9}$ & $\pm {\sqrt{-35}}/{3}$ & $%
(3\pm11\sqrt{5})/{(6\pm4\sqrt{-35})}$ \\ \hline
\end{tabular}
\bigskip
\caption{CM-values of $X^{14}_0(5)$}
\end{table}

\renewcommand{\arraystretch}{1.5}
\tabcolsep=10pt                         
\begin{table}
\begin{tabular}{|c|c|c|c|}
\hline
CM-point& $X_{0}^{15}(2) /W_{15,2}$ & $X_{0}^{15}(2)/\gen{w_2,w_{15}}$
  & $X_0^{15}(2)/\gen{w_{15}}$ \\ 
discriminant & $s$ & $t\ (s=t^2)$ & $x\ (t=(x^2+2)/2x)$ \\ \hline
$-7$ & $1/4$ & $\pm 1/2$ & $(\pm 1\pm\sqrt{-7})/2$ \\
$-12$ & $\infty$ & $\infty$ & $\infty,0$ \\
$-15$ & $5/4$ & $\pm\sqrt5/2$ & $(\pm\sqrt5\pm\sqrt{-3})/2$ \\
$-28$ & $9/4$ & $\pm3/2$ & $\pm1,\pm2$ \\
$-40$ & $0$ & $0$ & $\pm\sqrt{-2}$ \\
$-48$ & $-1/4$ & $\pm\sqrt{-1}/2$ & $\pm\sqrt{-1},\pm2\sqrt{-1}$ \\
$-52$ & $1$ & $\pm 1$ & $\pm1\pm\sqrt{-1}$ \\
$-60$ & $-1/12$ & $\pm1/2\sqrt{-3}$ & $\pm\sqrt{-3},\pm2/\sqrt{-3}$ \\
$-88$ & $4$ & $\pm2$ & $\pm2\pm\sqrt2$ \\
$-120$ & $2$ & $\pm\sqrt2$ & $\pm\sqrt2$ \\
$-132$ & $-1$ & $\pm\sqrt{-1}$ & $\pm\sqrt{-1}\pm\sqrt{-3}$ \\
$-148$ & $1/25$ & $\pm1/5$ & $(\pm1\pm7\sqrt{-1})/5$ \\
$-168$ & $2/3$ & $\pm\sqrt{6}/3$ & $(\pm\sqrt6\pm2\sqrt{-3})/3$ \\
$-228$ & $-1/9$ & $\pm\sqrt{-1}/3$ & $(\pm\sqrt{-1}\pm\sqrt{-19})/3$
  \\
$-232$ & $144/121$ & $\pm 12/11$ & $(\pm12\pm7\sqrt{-2})/11$ \\
$-240$ & $-25/12$ & $\pm5/2\sqrt{-3}$ & $\pm2\sqrt{-3},\pm\sqrt{-3}/3$
  \\
$-280$ & $10$ & $\pm\sqrt{10}$ & $\pm2\sqrt2\pm\sqrt{10}$ \\
$-312$ & $2/25$ & $\pm\sqrt2/5$ & $(\pm\sqrt2\pm4\sqrt{-3})/5$ \\
$-340$ & $9/17$ & $\pm3/\sqrt{17}$ & $(\pm3\pm5\sqrt{-1})/\sqrt{17}$
  \\
$-372$ & $-31/9$ & $\pm\sqrt{-31}/3$ &
         $(\pm7\sqrt{-1}\pm\sqrt{-31})/3$ \\
$-408$ & $68/25$ & $\pm2\sqrt{17}/5$ & $(\pm3\sqrt2\pm2\sqrt{17})/5$
  \\
$-420$ & $5/3$ & $\pm\sqrt{15}/3$ & $(\pm\sqrt{15}\pm\sqrt{-3})/3$ \\
$-520$ & $-8/121$ & $\pm2\sqrt{-2}/11$
       & $(\pm2\sqrt{-2}\pm5\sqrt{-10})/11$ \\
$-660$ & $-5/11$ & $\pm\sqrt{-55}/11$
       & $(\pm\sqrt{-55}\pm3\sqrt{-33})/11$ \\
$-708$ & $-841/121$ & $\pm 29\sqrt{-1}/11$
       & $(\pm29\sqrt{-1}\pm19\sqrt{-3})/11$ \\
$-760$ & $450/529$ & $\pm15\sqrt2/23$
       & $(\pm15\sqrt2\pm4\sqrt{-38})/23$ \\
$-840$ & $40/27$ & $\pm2\sqrt{30}/9$
       & $(\pm2\sqrt{30}\pm\sqrt{-42})/9$ \\
\hline
\end{tabular}
\bigskip
\caption{CM-values of $X^{15}_0(2)$}
\end{table}
\clearpage

\renewcommand{\arraystretch}{1.5}
\tabcolsep=10pt                         
\begin{table}
\begin{tabular}{|c|c|c|}
\hline
CM-point& $X_{0}^{21}(2) /W_{21,2}$ & $X_{0}^{21}(2)/\gen{w_3,w_7}$ \\
discriminant & $s$ & $x\ (s=x^2)$ \\ \hline
$-4$ & $\infty$ & $\infty$ \\
$-7$ & $-7$ & $\pm\sqrt{-7}$ \\
$-15$ & $-5/3$ & $\pm\sqrt{-15}/3$ \\
$-16$ & $1$ & $\pm 1$ \\
$-28$ & $1/9$ & $\pm1/3$ \\
$-60$ & $9$ & $\pm3$ \\
$-84$ & $-3$ & $\pm\sqrt{-3}$ \\
$-100$ & $1/5$ & $\pm1/\sqrt5$ \\
$-112$ & $25$ & $\pm5$ \\
$-120$ & $-1/3$ & $\pm1/\sqrt{-3}$ \\
$-148$ & $37/9$ & $\pm\sqrt{37}/3$ \\
$-168$ & $0$ & $0$ \\
$-228$ & $-25/3$ & $\pm5/\sqrt{-3}$ \\
$-232$ & $-32$ & $\pm4\sqrt{-2}$ \\
$-280$ & $-35/9$ & $\pm\sqrt{-35}/3$ \\
$-312$ & $-8/3$ & $\pm2\sqrt{-6}/3$ \\
$-372$ & $-3/4$ & $\pm\sqrt{-3}/2$ \\
$-408$ & $-75$ & $\pm5\sqrt{-3}$ \\
$-420$ & $21$ & $\pm\sqrt{21}$ \\
$-532$ & $-19/4$ & $\pm\sqrt{-19}/2$ \\
$-708$ & $25/48$ & $\pm5/4\sqrt{-3}$ \\
$-840$ & $-16/3$ & $\pm4/\sqrt{-3}$ \\
\hline
\end{tabular}
\bigskip
\caption{CM-values of $X^{21}_0(2)$}
\end{table}
\clearpage

\renewcommand{\arraystretch}{1.5}
\tabcolsep=10pt                         
\begin{table}
\begin{tabular}{|c|c|c|c|}
\hline
CM-point& $X_{0}^{22}\left( 3\right) /W_{22,3}$ & $X_{0}^{22}\left( 3\right)
/\langle w_2,w_{33}\rangle$ & $X_{0}^{22}\left( 3\right) /w_{66}$ \\
discriminant & $s$ & $t\ (s=(t^2+1)/{t^2})$ & $x\ (t=(x^2-1)/2x)$ \\ \hline
${-3}$ & $1$ & $\infty $ & $0,\infty$ \\ 
${-11}$ & $\infty $ & $0$ & $\pm 1$ \\ 
${-20}$ & ${5}/{4}$ & $\pm 2$ & $\pm 2\pm \sqrt{5}$ \\
${-132}$ & $0$ & $\pm \sqrt{-1}$ & $\pm \sqrt{-1}$ \\
${-168}$ & ${27}/{28}$ & $\pm 2\sqrt{-7}$ & $\pm 2\sqrt{-7}\pm 3\sqrt{-3%
}$ \\
${-267}$ & ${169}/{196}$ & $\pm{ 14\sqrt{-3}}/{9}$ & $\pm{ 
\sqrt{-3}}/{9},\pm 3\sqrt{-3}$ \\ 
${-312}$ & ${25}/{52}$ & $\pm{ 2\sqrt{-39}}/{9}$ & $(\pm 2%
\sqrt{-39}\pm 5\sqrt{-3})/{9}$ \\ 
${-372}$ & ${31}/{4}$ & $\pm{ 2\sqrt{3}}/{9}$ & $(\pm 2\sqrt{3%
}\pm \sqrt{93})/{9}$ \\ 
${-408}$ & ${18}/{17}$ & $\pm \sqrt{17}$ & $\pm \sqrt{17}\pm 3\sqrt{2}$
\\
${-627}$ & $-{11}/{16}$ & $\pm{ 4\sqrt{-3}}/{9}$ & $(\pm 4%
\sqrt{-3}\pm \sqrt{33})/{9}$ \\
${-660}$ & ${45}/{44}$ & $\pm 2\sqrt{11}$ & $\pm 2\sqrt{11}\pm 3\sqrt{5%
}$ \\
${-708}$ & ${675}/{676}$ & $\pm 26\sqrt{-1}$ & $\pm
26\sqrt{-1}\pm 15\sqrt{-3}$ \\ \hline
\end{tabular}
\bigskip
\caption{CM-values of $X^{22}_0(3)$}

\renewcommand{\arraystretch}{1.5}
\tabcolsep=10pt                         
\begin{tabular}{|c|c|c|c|}
\hline
CM-point& $X_{0}^{22}\left( 5\right) /W_{22,5}$ & $X_{0}^{22}\left( 5\right)
/\langle w_{5},w_{22}\rangle$ & $X_{0}^{22}\left( 5\right) /w_{110}$ \\ 
discriminant & $s$ & $t\ (s=(t^2+1)/{t^2})$ & $x\ (t={-2x}/(x^2-1))$ \\ \hline
${-4}$ & $1$ & $\infty $ & $\pm 1$ \\ 
${-11}$ & $\infty $ & $0$ & $0,\infty$ \\ 
${-20}$ & $0$ & $\pm \sqrt{-1}$ & $\pm \sqrt{-1}$ \\
${-115}$ & ${5}/{4}$ & $\pm 2$ & $(\pm 1\pm \sqrt{5})/{2}$ \\ 

${-235}$ & $5$ & $\pm{ 1}/{2}$ & $\pm 2\pm \sqrt{5}$ \\
${-280}$ & $-{1}/{7}$ & $\pm{ \sqrt{-14}}/{4}$ & $(\pm 2\sqrt{%
-14}\pm \sqrt{-7})/{7}$ \\
${-520}$ & ${5}/{13}$ & $\pm \sqrt{-26}/{4}$ & $(\pm 2\sqrt{%
-26}\pm \sqrt{65})/{13}$ \\ 
${-660}$ & $-{5}/{4}$ & $\pm{ 2\sqrt{-1}}/{3}$ & $(\pm 3\sqrt{-1}\pm \sqrt{-5}%
)/{2}$ \\ 
${-715}$ & $-{5}/{11}$ & $\pm \sqrt{-11}/{4}$ & $(\pm 4%
\sqrt{-11}\pm \sqrt{-55})/{11}$ \\ 
${-760}$ & $-{5}/{76}$ & $\pm{ 2\sqrt{-19}}/{9}$ & $(\pm 9%
\sqrt{-19}\pm \sqrt{-95})/{38}$ \\ \hline
\end{tabular}
\bigskip
\caption{CM-values of $X^{22}_0(5)$}
\end{table}
\clearpage

\renewcommand{\arraystretch}{1.5}
\tabcolsep=10pt                         
\begin{table}
\begin{tabular}{|c|c|c|}
\hline
CM-point& $X_{0}^{26}\left( 3\right) /W_{26,3}$ & $X_{0}^{26}\left( 3\right)/\langle w_{3},w_{26}\rangle $ \\

discriminant& $s$ & $x\ (s=x^2)$ \\ \hline
${-8}$ & $\infty $ & $\infty $ \\ 
${-11}$ & $1$ & $\pm 1$ \\ 
${-20}$ & $-1$ & $\pm \sqrt{-1}$ \\
${-24}$ & $0$ & $0$ \\ 
${-84}$ & $-3$ & $\pm \sqrt{-3}$ \\ 
${-123}$ & $9$ & $\pm 3$ \\ 
${-132}$ & $3$ & $\pm \sqrt{3}$ \\ 
${-195}$ & $-{3}/{5}$ & $\pm{ \sqrt{-15}}/{5}$ \\ 
${-228}$ & $-{3}/{4}$ & $\pm {\sqrt{-3}}/{2}$ \\ 
${-267}$ & $-{9}/{25}$ & $\pm{3 \sqrt{-1}}/{5}$ \\ 
${-312}$ & $-{3}/{8}$ & $\pm{\sqrt{-6}}/{4}$ \\ 
${-372}$ & $-12$ & $\pm 2\sqrt{-3}$ \\ 
${-408}$ & $-{3}/{25}$ & $\pm{ \sqrt{-3}}/{5}$ \\
${-708}$ & $-{27}/{49}$ & $\pm{ 3\sqrt{-3}}/{7}$ \\ \hline
\end{tabular}
\bigskip
\caption{CM-values of $X^{26}_0(3)$}

\end{table}
\clearpage

\renewcommand{\arraystretch}{1.5}
\tabcolsep=10pt                         
\begin{table}
\begin{tabular}{|c|c|c|c|}
\hline
CM-point& $X_{0}^{39}(2) /W_{39,2}$ & $X_{0}^{39}(2)/\gen{w_3,w_{13}}$
  & $X_0^{39}(2)/\gen{w_{39}}$ \\ 
discriminant & $s$ & $t\ (s=t^2)$ & $x\ (t=3(x^2+4x+2)/(x^2-2))$ \\
  \hline
$-7$ & $-7$ & $\pm\sqrt{-7}$ & $(-3\pm\sqrt{-7})/2,(-3\pm\sqrt{-7})/4$ \\
$-15$ & $-15$ & $\pm\sqrt{-15}$ & $(-3\pm\sqrt{-3})(3\pm\sqrt{-15})/12$ \\
$-24$ & $\infty$ & $\infty$ & $\pm\sqrt2$ \\
$-28$ & $9$ & $\pm3$ & $\infty,0,-1,-2$ \\
$-52$ & $-9$ & $\pm3\sqrt{-1}$ & $-1\pm\sqrt{-1}$ \\
$-60$ & $1$ & $\pm 1$ & $-3\pm\sqrt5,(-3\pm\sqrt5)/2$ \\
$-84$ & $-3$ & $\pm\sqrt{-3}$ & $(3\pm\sqrt3)(-3\pm\sqrt{-3})/6$ \\
$-132$ & $-11$ & $\pm\sqrt{-11}$
      & $(3\pm\sqrt{-1})(-3\pm\sqrt{-11})/10$ \\
$-148$ & $-1$ & $\pm\sqrt{-1}$ & $-3\pm\sqrt{-1},(-3\pm\sqrt{-1})/5$
  \\
$-228$ & $-27$ & $\pm3\sqrt{-3}$ & $(1\pm\sqrt{-1})(-1\pm\sqrt{-3})/2$
  \\
$-232$ & $-9/2$ & $\pm3/\sqrt{-2}$ & $-2\pm\sqrt{-2},(-2\pm\sqrt{-2})/3$
  \\
$-312$ & $0$ & $0$ & $-2\pm\sqrt2$ \\
$-372$ & $-25/3$ & $\pm 5/\sqrt{-3}$
       & $(9\pm\sqrt{-3})(-9\pm5\sqrt{-3})/78$ \\
$-408$ & $-12$ & $\pm2\sqrt{-3}$
       & $(6\pm\sqrt{-6})(-3\pm2\sqrt{-3})/21$ \\
$-520$ & $-10$ & $\pm\sqrt{-10}$
       & $(6\pm\sqrt{-2})(-3\pm\sqrt{-10})/19$ \\
$-708$ & $-59$ & $\pm\sqrt{-59}$
       & $(3\pm5\sqrt{-1})(-3\pm\sqrt{-59})/34$ \\
$-1092$ & $-1/3$ & $\pm1/\sqrt{-3}$
       & $(9\pm\sqrt{39})(-9\pm\sqrt{-3})/42$ \\
\hline
\end{tabular}
\bigskip
\caption{CM-values of $X^{39}_0(2)$}
\end{table}
\clearpage

\end{appendices}

\def\cprime{$'$}

\end{document}